\theoremstyle{definition}
 \newtheorem{definition}{Definition}[section]
\theoremstyle{plain}
 \newtheorem{proposition}[definition]{Proposition}
\theoremstyle{plain}
 \newtheorem{theorem}[definition]{Theorem}
\theoremstyle{definition}
 \newtheorem{example}[definition]{Example}
\theoremstyle{plain}
 \newtheorem{lemma}[definition]{Lemma}
\theoremstyle{plain}
 \newtheorem{corollary}[definition]{Corollary}
\theoremstyle{remark}
\theoremstyle{definition}
\theoremstyle{plain}
\newcommand{\Ext}{\mathrm{Ext}}
\newcommand{\Hom}{\mathrm{Hom}}
\newcommand{\Fun}{\mathrm{F}}
\renewcommand{\H}{\mathrm{H}}
\newcommand{\Ob}{\mathrm{Ob}}
\newcommand{\Z}{\mathbb{Z}}
\newcommand{\A}{\Lambda}
\newcommand{\G}{\mathrm{G}}
\renewcommand{\k}{\Bbbk}
\newcommand{\Tr}{\mathscr{T}}
\newcommand{\Sub}{\mathscr{S}}
\newcommand{\Ab}{\mathscr{A}}
\newcommand{\Cb}{\mathscr{C}}
\newcommand{\Db}{\mathscr{D}}
\newcommand{\Eb}{\mathscr{E}}
\newcommand{\R}{\mathbb{R}}
\newcommand{\Obj}{\mathscr{O}}
\newcommand{\Fb}{\mathscr{F}}
\newcommand{\Po}{\mathscr{P}}
\title[Exact weights for triangulated categories]{Exact weights and path metrics for triangulated categories and the derived category of persistence modules} 
\thanks{}
\author[Bubenik]{Peter Bubenik}
\address{Department of Mathematics, University of Florida, Gainesville, FL,  United States of America}
\email{peter.bubenik@ufl.edu}
\author[V\'elez-Marulanda]{Jos\'e A.\ V\'elez-Marulanda}
\address{Department of Applied Mathematics \& Physics, Valdosta State University, Valdosta, GA,  United States of America}
\email{javelezmarulanda@valdosta.edu}
\address{Facultad de Matem\'aticas e Ingenier\'{\i}as, Fundaci\'on Universitaria Konrad Lorenz, Bogot\'a D.C.,  Colombia}
\email{josea.velezm@konradlorenz.edu.co}
\keywords{Distances for triangulated categories \and derived category of persistence modules}
\subjclass[2020]{55N31 \and 18E10 \and 18G80}
\begin{document}
\maketitle

\begin{abstract}

We define exact weights on a pretriangulated category to be nonnegative functions on objects satisfying a subadditivity condition with respect to distinguished triangles.
Such weights induce a metric on objects in the category,
which we call a path metric. Our exact weights generalize the rank functions of J.\ Chuang and A.\ Lazarev for triangulated categories and are analogous to the exact weights for an exact category given by the first author and J.\ Scott and D.\ Stanley. We show that (co)homological functors from a triangulated category to an abelian category with an additive weight induce an exact weight on the triangulated category. We prove that triangle equivalences induce an isometry for the path metrics induced by cohomological functors.
In the perfectly generated or compactly generated case, we use Brown representability to express the exact weight on the triangulated category.
We give three characterizations of exactness for a weight on a pretriangulated category and show that they are equivalent.
We also define Wasserstein distances for triangulated categories.
Finally, we apply our work to derived categories of persistence modules and to representations of continuous quivers of type $\mathbb{A}$.
\end{abstract}

\renewcommand{\labelenumi}{\textup{(\roman{enumi})}}
\renewcommand{\labelenumii}{\textup{(\roman{enumi}.\alph{enumii})}}
\renewcommand{\labelenumiii}{\textup{(\roman{enumi}.\alph{enumii}.\arabic{enumiii})}}

\numberwithin{equation}{section}

\section{Introduction}\label{into}

Let $\Tr$ be a triangulated category with triangulated subcategory $\Sub$. 
In Verdier localization \cite{verdier}, \cite[Theorem 2.1.8]{neeman} one formally inverts the morphisms in $\Tr$ whose cone is an object in $\Sub$. 
Inspired by the theory of optimal transportation~\cite{kantorovich,villani},
we consider a quantitative version. Assign each object in $\Sub$ a weight and define the cost of a morphism in $\Tr$ whose cone is in $\Sub$ to the be the weight of that cone. 
Define the distance between objects in $\Tr$
to be the infimum of the cost of a path between the two objects, 
where a path is a finite composition of maps with cones in $\Sub$ and formal inverses of maps with cones in $\Sub$ and their cost is the sum of the costs of these maps.

In more detail,
we define exact weights for 
pretriangulated categories %, 
\cite[Def. 1.1.2]{neeman} as follows. 
A {\it weight} on 
a category
$\Cb$ with 
zero object $0$
is a function 
from the objects of $\Cb$ to $[0,\infty]$
such that $w(0)=0$, and if $X$ and $Y$ are
isomorphic objects 
then $w(X)=w(Y)$. 
If  
$\Cb$ is a pretriangulated category, then we say that $w$ is {\it exact}  provided that for all distinguished triangles $X\to Y\to Z\to \Sigma X$, 
$w(Y) \leq w(X)+w(Z)$, and 
for all objects $X$, $w(\Sigma X) = w(X)$. 
We give a more general definition in Definition~\ref{exactweight}.

If, in addition, $\Cb$ is a triangulated category, 
$w$ is finite and
for all objects $X$ and $Y$,  
$w(X\oplus Y) = w(X) + w(Y)$  then $w$ is  
a {\it rank function},
as defined by J. Chuang and A. Lazarev, \cite[Def. 2.1]{chuang}.  
In Example~\ref{ex:levels}, we use {\it levels} in a triangulated category, as defined by L.L.\ Avramov et al. \cite{avramov3}, to give examples of exact weights that are not rank functions.
Rank functions have also been studied by T.\ Conde et al. \cite{conde}, and as mentioned by Berkouk in \cite[\S 5]{berkouk2}, the r\^{o}le of these rank functions in persistence theory should be investigated. 

In \cite{bubenik1}, the first author together with J. Scott and D. Stanley introduced %the concept of 
exact weights and path metrics for exact categories. This approach allowed them to define a distance for generalized persistence modules whose indexing category is a measure space. By using this distance, they were also able to define Wasserstein distances for additive categories with the property that every object is isomorphic to a direct sum of objects whose endomorphism ring is local. In particular, they obtained that this definition coincides with the usual definition of Wasserstein distances for $1$-parameter persistence modules.  

Exact weights on triangulated categories and exact weights on exact categories are connected as follows. In one direction, consider an exact category $\Cb$, with and exact weight $w$
on $\mathcal{D}(\Cb)$, the derived category of $\Cb$, which is a triangulated category. If $0\to X\to Y\to Z\to 0$ is an admissible exact sequence in $\Cb$, then by e.g. \cite[Lemma 4.1.12]{krause4}, we obtain an distinguished triangle $X\to Y\to Z\to X[1]$ in $\mathcal{D}(\Cb)$, where $X$, $Y$ and $Z$ are viewed as stalk complexes concentrated in degree zero.  Since $w$ is exact (in the sense of this article), it follows that $w(X)\leq w(Y) + w(Z)$, $w(Y) \leq w(X)+w(Z)$, and $w(Z) \leq w(X) + w(Y)$. This shows in particular that exact weights on the derived category $\mathcal{D}(\Cb)$ induce exact weights on $\Cb$ (in the sense of \cite{bubenik1}).

In the other direction,
let $\Ab$ be an abelian category with a finite weight $w$  
that is additive, which means that if $0\to X\to Y\to Z\to 0$ is 
a short exact
sequence in
$\Ab$, then $w(Y) = w(X) + w(Z)$.
Now consider a pretriangulated category $\Tr$ and
a homological functor $\Fun: \Tr
\to \Ab$ (as in \ref{cohfun}).
We use $w$ and $\Fun$
to define a weight $\underline{w}_{\Fun}$ on 
$\Tr$ (Definition~\ref{def:w_underline})
and show that it is
exact (in the sense of this article).  
Our first main result, Theorem \ref{thm2.12}, 
proves that in this setting, triangle equivalences induce an isometry for the corresponding path metrics.
In the case that $\Tr$ is a perfectly generated or compactly generated triangulated category, $\Ab = \mathrm{Ab}$, and $\Fun$ is a contravariant homological functor (i.e. a cohomological functor) we use Brown representability (Theorem \ref{brownthm}) to express the corresponding path metric (Theorem~\ref{brownexam}).

Our second main result, Theorem \ref{thm1.5},  gives three equivalent
characterizations of exact weights for pretriangulated categories, 
and is an analog of the corresponding result for exact weights of an exact category
\cite[Thm. 3.28]{bubenik1}.

Next, we consider derived categories of persistence modules. In the case that the indexing category of the persistence module is a measure space, we define an exact weight for the derived category (Definition~\ref{cohmuint}).
Our third main result, Theorem \ref{thm4.6}, gives lower and upper bounds on the path metric of either the homotopy or derived category of persistence modules, and provides an analogous result to that in \cite[Thm. 1.2]{bubenik1}.

We finally discuss the abelian category $\mathrm{rep}_\k(\mathbb{A}_{\R,S})$ of finitely generated pointwise finitely dimensional continuous representations over the real line (as introduced by K. Igusa et al. in \cite{igusa-rock-todorov1}) in order to prove our 
fourth main 
result (Theorem \ref{thm4.3}), which involves Wasserstein distances between objects in $\mathrm{rep}_\k(\mathbb{A}_{\R,S})$ and derived equivalences. In particular, this provides an alternative approach to study derived categories of zigzag persistence modules as discussed in \cite{hiraoka}.

\subsection*{Related work}

Our work is primarily motivated by the study of persistence modules.
In \cite{hiraoka}, Y. Hiraoka et al. studied the bounded derived category of zigzag persistence modules by considering the bounded derived category of finite dimensional representations of an $\mathbb{A}_n$-type quiver $\mathcal{D}^b(\mathbb{A}_n\textup{-rep})$. Moreover, they used derived equivalences induced by tilting modules (see \cite[\S 1.7]{happel4}) to compute interleaving and bottleneck distances between complexes whose terms are zigzag persistence modules and prove a corresponding algebraic stability theorem (see \cite[Thm. 4.13]{hiraoka}).  An important fact used to obtain some of the main results in \cite{hiraoka} is that the category of finite dimensional representations of an $\mathbb{A}_n$-type quiver is a hereditary category, and thus every complex in $\mathcal{D}^b(\mathbb{A}_n\textup{-rep})$ is quasi-isomorphic to its own cohomology. On the other hand N. Berkouk has discussed interleaving distances for derived categories of multi-parameter persistence modules in \cite{berkouk2} and together with G. Ginot, obtained in \cite{berkouk1} an isometry theorem for the bounded derived category of constructible sheaves on $\mathbb{R}$. 
These results motivate the study of distances of
derived categories of more general categories of persistence modules.

Independently of \cite{bubenik1},
B. Guinti et al. introduced  the amplitude of an abelian category \cite[Def. 2.1]{Giunti:2021a}, which they showed is equivalent to a noise system of M. Scolamiero et al. \cite[Def. 6.1]{Scolamiero:2017}.
These notions are closely related to exact weights (in the sense of \cite{bubenik1}) but slightly less general. We invite the reader to look at  \cite{bubenik1, Giunti:2021a, Scolamiero:2017} for examples and applications of exact weights to topological data analysis.

There are at least two other interesting approaches to defining distances on triangulated categories.
%In related work, 
A.\ Neeman~\cite{neeman2} has studied Lawvere metrics on triangulated categories and their Cauchy completions.
Biran et al.  ~\cite{biran} 
have defined a family of metrics on triangulated categories, called a \emph{fragmentation metrics}.
In contrast to our approach, which uses weights on objects and zigzags of morphisms, theirs uses weights on distinguished triangles and iterated cone decompositions. It also makes fundamental use of the octahedral axiom, which our approach does not require.

We follow the terminology and notation in \cite{krause4} and \cite{neeman}.
To make this article accessible to a broad audience, 
we have included numerous definitions and properties concerning triangulated categories in the Appendix \ref{appendix}.

\section{Exact weights and examples}

In this section we review the definitions of weights and path metrics for categories and define exact weights for triangulated categories,  consider the ones that are induced by additive weights on abelian categories and cohomological functors and prove our first and second main results (Theorem \ref{thm2.12} and Theorem \ref{thm1.5}). 
We also 
define
Wasserstein distances for
triangulated categories, and provide some of their properties. 

\subsection{Weights and path metrics for categories}

Let $\mathscr{X}$ be a class. Following \cite[Def. 3.1]{bubenik1}, a {\it weight} $w$ on $\mathscr{X}$ is a a map that assigns $w(M)\in [0,\infty]$ to each $M\in \mathscr{X}$.  A {\it symmetric Lawvere metric} on $\mathscr{X}$ is a map $d: \mathscr{X}\times \mathscr{X}\to [0,\infty]$ that satisfies the following properties for all $M,N,P\in \mathscr{X}$: (i) $d(M,M) =0$; (ii) $d(M,N)= d(N,M)$; and (iii) $d(M,P)\leq d(M,N)+d(N,P)$. Let $\Cb$ be a category.  A  {\it metric} on $\Cb$ is a symmetric Lawvere metric $d$ 
%defined 
on the class of objects of $\Cb$ with the additional property that if $M$ and $N$ are isomorphic objects in $\Cb$, then $d(M,N)=0$. Let $\mathscr{M}$ be a class of morphisms in $\Cb$ with a weight $w$, and let $X$ and $Y$ be objects in $\Cb$. A {\it $\mathscr{M}$-zigzag} $\gamma$ from $X$ to $Y$ is a sequence of morphisms 
\begin{equation}\label{zigzag0}
\gamma: X = X_0\xrightarrow{\gamma_1} X_1\xleftarrow{\gamma_2}  X_2\xrightarrow{\gamma_3}\cdots \xleftarrow{\gamma_n} X_n = Y, 
\end{equation}
such that for all $1\leq i\leq n$, $\gamma_i \in \mathscr{M}$. In this situation, the {\it cost} of $\gamma$ is defined as 
\begin{equation}
\mathrm{cost}_w(\gamma) = \sum_{i=1}^nw(\gamma_i). 
\end{equation}
The {\it path distance} $d_w(X,Y)$ between $X$ and $Y$ is defined by
\begin{equation}
d_w(X,Y) = \inf \{\mathrm{cost}_w(\gamma)\},
\end{equation}  
the infimum of the set of costs for which there exists an
$\mathscr{M}$-zigzag $\gamma$ from $X$ to $Y$
with that cost. 
%In more detail, in von Neumann-Bernays-Godel set theory, define two zigzags to be equivalent if they have the same cost. Use global choice to choose a zigzag from each of the equivalence classes. This class is set since the cost function gives an injection into [0,\infty]. Take the infimum of the costs of this set. 
It follows from \cite[Lemma 3.5]{bubenik1} that 
%in this situation, 
if $\mathscr{M}$ includes 
%all 
the isomorphisms in $\Cb$ and the weight of each isomorphism is $0$, then $d_w$ defines a Lawvere metric on the objects of $\Cb$. We call $d_w$ the {\it path metric} defined by $\mathscr{M}$ and $w$.  

From now on, if $\Cb$ is an additive category with a class of objects $\mathcal{C}$ with weight $w$ and which contains the zero-objects, then we assume that $w(0)=0$ and if $A$ and $B$ are isomorphic objects in $\mathcal{C}$ then $w(A) = w(B)$.

\subsection{Weights and path metrics for pretriangulated categories}\label{sec2.2}

Let $\Tr$ be a pretriangulated category with suspension $\Sigma$ (as in \ref{traingcat}), let $\mathcal{S}$ be a class of objects in $\Tr$ that contains the zero-object and is closed under isomorphisms, and let $w$ be a weight on $\mathcal{S}$. 

Let $X$ and $Y$ be objects in $\Tr$ and let $\alpha: X\to Y$ be a morphism in $\Tr$. By the axiom (Tr1) of triangulated categories (as in \ref{traingcat}), $\alpha$ fits in an distinguished triangle  
\begin{equation}\label{inducedtriag0}
\xymatrix@=30pt{
X\ar[r]^{\alpha}&Y\ar[r]^{\beta}&C_{\alpha}\ar[r]^{\gamma}&\Sigma X.
}
\end{equation}
Moreover, if $X\xrightarrow{\alpha}Y\to C\to \Sigma X$ is another distinguished triangle in $\Tr$ containing $\alpha$, then it follows by e.g. \cite[Lemma 3.1.5]{krause4} that $C_\alpha\cong C$ in $\Tr$. We say that $\alpha$ is in $\mathscr{M}(\mathcal{S})$ if and only if $C_\alpha$ is $\mathcal{S}$. In this way, we obtain a weight $w: \mathscr{M}(\mathcal{S})\to [0,\infty]$ defined as $w(\alpha)=w(C_\alpha)$. In particular, if  $\alpha: X\to Y$ is an isomorphism in $\Tr$, then by e.g. \cite[Cor. 1.2.6]{neeman}, $X\xrightarrow{\alpha} Y\to 0 \to \Sigma X$ is an distinguished triangle in $\Tr$ and thus $\mathscr{M}(\mathcal{S})$ contains all isomorphisms in $\Tr$  
and furthermore $w(\alpha)=0$.
Let $X$ and $Y$ be arbitrary objects in $\Tr$ and let $\gamma$ be a $\mathscr{M}(\mathcal{S})$-zigzag from $X$ to $Y$ as in (\ref{zigzag0}). Then 

\begin{equation}
\mathrm{cost}_w(\gamma) = \sum_{i=1}^nw(C_{\gamma_i}), 
\end{equation}
where for all $1\leq i\leq n$, $C_{\gamma_i}$ is as in (\ref{inducedtriag0}). Therefore we obtain a path metric $d_w(X,Y)$ on the objects of $\Tr$ defined by $\mathscr{M}(\mathcal{S})$ and $w$.

Following \cite[Def. 3.19]{bubenik1}, for all objects $X$ in $\Tr$, we  let $|d_w|(X) = d_w(X,0)$ and denote by $|d_w|_\mathcal{S}$ the restriction of $|d_w|$ to the class of objects $\mathcal{S}$. Observe that $|d_w|(0) = 0$, and if $X$ is isomorphic to $X'$ in $\Tr$, then $|d_w|(X) = |d_w|(X')$.  
Following \cite[Def. 3.21]{bubenik1}, we say that $w$ is {\it stable} provided that $|d_w|_\Obj = w$, and that  $w$ {\it lower bounds its path metric} if for all objects $X$ and $Y$ in $\mathcal{S}$, $|w(X)-w(Y)|\leq d_w(X,Y)$.

We next adapt \cite[Lemma 3.20]{bubenik1} to our situation.

\begin{lemma}\label{lemma1.1}
\begin{enumerate}
\item $|d_w|_\mathcal{S}\leq w$;
\item for all $X$ and $Y$ in $\Tr$, $d_w(X,Y)\leq |d_w|(X) + |d_w|(Y)$;
\item for all $X$ and $Y$ in $\mathcal{S}$, $d_w(X,Y)\leq w(X)+w(Y)$.
\end{enumerate}
\end{lemma}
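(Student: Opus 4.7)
The plan is to prove each of the three statements by exhibiting an explicit $\mathscr{M}(\mathcal{S})$-zigzag that witnesses the asserted bound, and then to invoke the triangle inequality for $d_w$. The argument parallels the proof of \cite[Lemma 3.20]{bubenik1}, with the cone of a morphism in $\Tr$ playing the role of the quotient object in an exact category.

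For (i), I fix $X\in\mathcal{S}$ and consider the one-step zigzag $X\xleftarrow{\alpha} 0$ determined by the unique morphism $\alpha\colon 0\to X$. By axiom (Tr1), $\alpha$ fits in an exact triangle $0\to X\to C_\alpha\to\Sigma 0$. Since $\Sigma 0\cong 0$ and cones are unique up to isomorphism by \cite[Lemma 3.1.5]{krause4}, we get $C_\alpha\cong X\in\mathcal{S}$, so $\alpha\in\mathscr{M}(\mathcal{S})$ and $w(\alpha)=w(C_\alpha)=w(X)$. Hence $|d_w|(X)=d_w(X,0)\leq w(X)$, as needed.

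For (ii), the preliminary discussion already notes that $\mathscr{M}(\mathcal{S})$ contains all isomorphisms of $\Tr$, and each such isomorphism has cone isomorphic to $0$ and therefore weight $0$; by \cite[Lemma 3.5]{bubenik1}, $d_w$ is a symmetric Lawvere metric on the objects of $\Tr$. Because $\mathcal{S}$ contains the zero-object, $0$ is an admissible intermediate object, and the triangle inequality yields
\[ d_w(X,Y)\leq d_w(X,0)+d_w(0,Y)=|d_w|(X)+|d_w|(Y). \]
For (iii), combine (i) and (ii): when $X,Y\in\mathcal{S}$,
\[ d_w(X,Y)\leq |d_w|(X)+|d_w|(Y)\leq w(X)+w(Y). \]
Equivalently, one can concatenate two copies of the zigzag from (i) to obtain the zigzag $X\xleftarrow{} 0\xrightarrow{} Y$ of cost $w(X)+w(Y)$.

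There is no substantive obstacle here; the entire content of the lemma reduces to the single observation in (i), namely that the cone of $0\to X$ is isomorphic to $X$. This fact is needed both to certify membership in $\mathscr{M}(\mathcal{S})$ and to pin down the cost of the zigzag, and it ultimately rests on the triangulated-category identities $\Sigma 0\cong 0$ and uniqueness of cones up to (non-canonical) isomorphism recorded in the Appendix.
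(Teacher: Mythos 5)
Your proof is correct and follows essentially the same route as the paper: part (i) comes from the one-step zigzag given by $0\to X$, whose cone is $X$ (via the exact triangle $0\to X\xrightarrow{\mathrm{id}_X}X\to 0$), and parts (ii) and (iii) follow from the triangle inequality through the zero object and from combining (i) with (ii). The extra justification you give for $C_\alpha\cong X$ via uniqueness of cones is a correct elaboration of what the paper leaves implicit.
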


\begin{proof}
Let $X$ be an arbitrary object in $\mathcal{S}$ and consider the distinguished triangle $0\to X\xrightarrow{\mathrm{id}_X}X\to 0$. Thus the cost of the $\mathscr{M}(\mathcal{S})$-zigzag $0\to X$ is $w(X)$, which implies $d_w(X,0)\leq w(X)$. This proves (i). Assume next that $X$ and $Y$ are objects in $\Tr$. Then $d_w(X,Y)\leq d_w(X,0)+d_w(0,Y)= |d_w|(X)+|d_w|(Y)$. This proves (ii). Finally, (iii) follows from combining (ii) and (i). 
%This finishes the proof of Lemma \ref{lemma1.1}.  
\end{proof}

\subsection{Exact weights for pretriangulated categories}

Let $\Tr$ and $\Sigma$, $\mathcal{S}$ and $w$ be as before. The following definition extends \cite[Def. 3.9]{bubenik1} to triangulated categories.

\begin{definition} \label{exactweight}
The weight $w$ is {\it exact} if for all distinguished triangles $X\to Y\to Z\to \Sigma X$ in $\Tr$ whose terms are in $\mathcal{S}$, $w(X)\leq w(Y)+w(Z)$, $w(Y)\leq w(X)+w(Z)$ and $w(Z)\leq w(X)+w(Y)$.
\end{definition}

Assume that $\mathcal{S}$ is closed under direct sums and that $w$ is an exact weight on $\mathcal{S}$ (as in Definition \ref{exactweight}). If $X$ and $Y$ are objects in $\mathcal{S}$, then  $w(X\oplus Y) \leq w(X) + w(Y)$. Indeed,  consider the the distinguished triangles $X\to X\to 0\to \Sigma X$ and $0\to Y\to Y\to 0$ induced by the identity morphisms on $X$ and $Y$, respectively. By \cite[Prop  1.2.1 \& Remark 1.2.2]{neeman}, the direct sum of these distinguished triangles $X\to X\oplus Y\to Y\to \Sigma X$ is also an distinguished triangle. Then by the exactness of $w$  we get $w(X\oplus Y)\leq w(X) + w(Y)$. 

We say that the class of objects $\mathcal{S}$ in $\Tr$ is {\it closed under $\Sigma$} if for all $X\in \mathcal{S}$ and $i\in \Z$, we have $\Sigma^iX\in \mathcal{S}$.

\begin{lemma}\label{lemma1.2}
Assume that $\mathcal{S}$ is closed under $\Sigma$, and that $w$ is a weight on $\mathcal{S}$.
\begin{enumerate}
\item The weight $w$ is exact (in the sense of Definition \ref{exactweight}) if and only if the two following conditions are satisfied:
\begin{enumerate}
\item For all objects $X$ in $\mathcal{S}$, $w(X)=w(\Sigma X)$.
\item For all distinguished triangles $X\to Y\to Z\to \Sigma X$ in $\Tr$ whose terms are in $\mathcal{S}$, $w(Y)\leq w(X) +w(Z)$. 
\end{enumerate}
\item If $w$ is an exact weight, then for all objects $X$, $Y$ in $\Tr$ and $i\in \Z$, $d_w(X,Y) = d_w(\Sigma^iX,\Sigma^iY)$.
\end{enumerate}
\end{lemma}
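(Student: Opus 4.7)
For part (i), the forward direction is essentially a matter of plugging in the right trivial triangles. Condition (b) is a special case of exactness, so it is immediate. For (a), I would start with the exact triangle $X \xrightarrow{\mathrm{id}_X} X \to 0 \to \Sigma X$ induced by the identity on $X$ and rotate it (using axiom Tr2) to obtain the exact triangle $X \to 0 \to \Sigma X \to \Sigma X$, whose terms all lie in $\mathcal{S}$ since $\mathcal{S}$ is closed under $\Sigma$ and contains $0$. Applying the three inequalities of Definition \ref{exactweight} to this triangle and using $w(0)=0$ gives $w(\Sigma X) \leq w(X)$ and $w(X) \leq w(\Sigma X)$, so $w(\Sigma X) = w(X)$.

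For the reverse direction of (i), assume (a) and (b) and take an exact triangle $X \to Y \to Z \to \Sigma X$ with terms in $\mathcal{S}$. One of the required inequalities is (b) itself. For the other two, I would rotate the triangle once and twice to obtain the exact triangles $Y \to Z \to \Sigma X \to \Sigma Y$ and $\Sigma^{-1}Z \to X \to Y \to Z$ (the latter via inverse rotation; all terms remain in $\mathcal{S}$ because $\mathcal{S}$ is closed under $\Sigma$). Applying (b) to each of these and using (a) to replace $w(\Sigma X)$ by $w(X)$ and $w(\Sigma^{-1}Z)$ by $w(Z)$ yields the remaining two inequalities.

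For part (ii), the plan is to transport zigzags along $\Sigma^i$. Given any $\mathscr{M}(\mathcal{S})$-zigzag $\gamma$ from $X$ to $Y$ as in (\ref{zigzag0}), apply the autoequivalence $\Sigma^i$ termwise to obtain a sequence of morphisms $\Sigma^i X = \Sigma^i X_0 \to \Sigma^i X_1 \leftarrow \cdots \to \Sigma^i X_n = \Sigma^i Y$. Because $\Sigma$ is a triangle autoequivalence, applying $\Sigma^i$ to the cone triangle (\ref{inducedtriag0}) for each $\gamma_j$ produces an exact triangle $\Sigma^i X_{j-1} \xrightarrow{\Sigma^i \gamma_j} \Sigma^i X_j \to \Sigma^i C_{\gamma_j} \to \Sigma^{i+1} X_{j-1}$, so by the uniqueness of cones (\cite[Lemma 3.1.5]{krause4}) we get $C_{\Sigma^i \gamma_j} \cong \Sigma^i C_{\gamma_j}$. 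Since $\mathcal{S}$ is closed under $\Sigma$, this cone lies in $\mathcal{S}$, so the shifted zigzag is a valid $\mathscr{M}(\mathcal{S})$-zigzag. By part (i)(a) iterated, $w(C_{\Sigma^i \gamma_j}) = w(\Sigma^i C_{\gamma_j}) = w(C_{\gamma_j})$, so the shifted zigzag has the same cost. Taking infima gives $d_w(\Sigma^i X, \Sigma^i Y) \leq d_w(X,Y)$, and applying the same construction with $-i$ to a zigzag between $\Sigma^i X$ and $\Sigma^i Y$ yields the reverse inequality.

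The only subtle step is justifying the rotated cone triangle in (ii), which relies on $\Sigma$ being an equivalence of triangulated categories (so that it sends exact triangles to exact triangles both ways); this is a standard property included in the triangulated category axioms and so should not cause real difficulty once invoked. Everything else is bookkeeping on the defining inequalities and the cost formula.
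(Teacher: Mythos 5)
Your proposal is correct and follows essentially the same route as the paper: part (i.a) via the rotated triangle $X\to 0\to\Sigma X\to\Sigma X$, the converse of (i) by rotating the given triangle and applying (i.a) and (i.b), and part (ii) by transporting zigzags along the autoequivalence $\Sigma^i$ and using $w(\Sigma^i C_{\gamma_j})=w(C_{\gamma_j})$. The only (immaterial) difference is that for the inequality $w(X)\leq w(Y)+w(Z)$ you use one inverse rotation where the paper's ``similarly'' implicitly uses two forward rotations.
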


\begin{proof}
(i). Observe that (i.b) follows trivially from the definition of an exact weight (as in Definition \ref{exactweight}). Let $X$ be an object in $\mathcal{S}$. By using the axioms (Tr0) and (Tr2) of pretriangulated categories (as in \ref{traingcat}), we obtain a distinguished triangle $X\to 0\to \Sigma X\to \Sigma X$. Since $w$ is exact, we get $w(X)\leq w(0)+w(\Sigma X)=w(\Sigma X)$ and $w(\Sigma X)\leq w(0)+w(X)=w(X)$. Thus $w(X) = w(\Sigma X)$, which proves (i.a). Conversely, assume that $X\to Y\to Z\to \Sigma X$ is an distinguished triangle in $\Tr$. We need to prove that $w(X)\leq w(Y) +w(Z)$ and $w(Z) \leq w(X)+w(Y)$. Indeed, by using the axiom (Tr2) of triangulated categories, we obtain an distinguished triangle $Y\to Z\to \Sigma X\to \Sigma X$. Thus by using (i.a) and (i.b), we get $w(Z)\leq w(Y)+w(\Sigma X)= w(Y)+w(X)$. Similarly, we obtain $w(X)\leq w(Y)+w(Z)$.

(ii). Let $X$, $Y$ be objects in $\Tr$, $i\in \Z$ and $\epsilon>0$ be fixed. Then there is $\mathscr{M}(\mathcal{S})$-zigzag $\gamma$ between $\Sigma^iX$ and $\Sigma^iY$ as in (\ref{zigzag0}) such that $\mathrm{cost}_w(\gamma) < d_w(\Sigma^iX, \Sigma^iY) + \epsilon$. Since $\Sigma$ is an exact functor on $\Tr$ (as in \ref{triangleeq}), it follows that $\gamma$ induces a $\mathscr{M}(\mathcal{S})$-zigzag $\Sigma^{-i} \gamma$ between $X$ and $Y$ such that 
\begin{equation*}
\mathrm{cost}_w(\Sigma^{-i}\gamma)=\sum_{k=1}^nw(\Sigma^{-i}C_{\gamma_k}).
\end{equation*}
Since $w$ is an exact weight, we obtain by (i) that for all $1\leq k\leq n$, $w(\Sigma^{-i}C_{\gamma_k})=w(C_{\gamma_k})$. Therefore
\begin{align*}
d_w(X,Y)\leq \mathrm{cost}_w(\Sigma^{-i}\gamma) = \mathrm{cost}_w(\gamma)< d_w(\Sigma^iX,\Sigma^iY)+\epsilon,
\end{align*}
which implies $d_w(X,Y)\leq d_w(\Sigma^iX,\Sigma^iY)$. Similarly we obtain $d_w(\Sigma^iX,\Sigma^iY) \leq d_w(X,Y)$. This proves (ii) and finishes the proof of Lemma \ref{lemma1.2}.  
\end{proof}

Assume that $\Tr$ is a triangulated category. 
%As mentioned in \S \ref{into}, 
If $\mathcal{S}= \Ob(\Tr)$, and $w$ is a finite exact weight on $\mathcal{S}$ such that $w(X\oplus Y) =w(X) + w(Y)$ for all objects $X$ and $Y$ in $\Tr$, then $w$ is called a {\it rank function} \cite[Def. 2.1]{chuang}.  In the following example, we use {\it levels} in triangulated categories (as introduced by  L. L. Avramov et al. in \cite[\S 2]{avramov3}) to give an example of an exact weight (as in Definition \ref{exactweight}) that is not a rank function. 

\begin{example} \label{ex:levels}
Let $\Sub$ be a subcategory of a triangulated category $\Tr$. We say that $\Sub$ is {\it full} if every morphism in $\Tr$ between objects in $\Sub$ is contained in $\Sub$, and we say that $\Sub$ is {\it strict} if it is closed under isomorphisms in $\Tr$. Let $\Sub$ be a subcategory of $\Tr$. We denote by $\mathrm{add}^{\Sigma}(\Sub)$ the intersection of all strict and full subcategories of $\Tr$ that are closed under finite direct sums and all suspensions and which also contain $\Sub$. On the other hand, we denote by $\mathrm{smd}(\Sub)$ the intersection of all subcategories of $\Tr$  that are closed under isomorphisms and direct summands, and which also contains $\mathcal{S}$. Assume next that $\Sub$ and $\Sub'$ are both strict and full triangulated subcategories of $\Tr$ (as in \ref{sub3}). We denote by $\Sub\star \Sub'$ the full and strict category of $\Tr$ whose objects are those $Y$ in $\Tr$ such that there is an distinguished triangle $X\to Y\to Z\to \Sigma X$ such that $X$ is an object in $\Sub$ and $Z$ is an object in $\Sub'$.  For all integers $n\geq 0$, we let $\Sub^{\star n}$ be the zero-subcategory if $n=0$, $\Sub$ itself if $n=1$, and the $n$-fold $\star$-product $\Sub\star\cdots\star \Sub$ of $\Sub$ with itself if $n\geq 2$. We denote by $\mathrm{thick}_\Tr(\Sub)$ the intersection of all thick subcategories of $\Tr$ (as in \ref{sub3}) that contain $\Sub$, and for all integers $n\geq 0$, let $\mathrm{thick}^n_\Tr(\Sub)=\mathrm{smd}(\mathrm{add}^{\Sigma}(\Sub)^{\star n})$. In particular, we obtain a filtration of subcategories of $\Tr$
\begin{equation*}
\{0\} = \mathrm{thick}^0_{\Tr}(\Sub)\subseteq \mathrm{thick}^1_{\Tr}(\Sub)\subseteq \cdots \subseteq \bigcup_{n\in \mathbb{N}}\mathrm{thick}^n_{\Tr}(\Sub)=\mathrm{thick}_\Tr(\Sub).
\end{equation*}
\noindent
For each object $X$ in $\Tr$ we define its {\it $\Sub$-level} as the number 
\begin{equation*}
\mathrm{level}^{\Sub}_\Tr(X) =\inf\{n\in \mathbb{N}:M\in \Ob(\mathrm{thick}^n_{\Tr}(\Sub))\}.
\end{equation*}

The following statements follow from \cite[Lemma 2.4]{avramov3}:
\begin{enumerate}
\item For all objects $X$ in $\Tr$ and all integers $i\in \Z$, $\mathrm{level}^\Sub_\Tr(\Sigma^iX)=\mathrm{level}^\Sub_\Tr(X)$.
\item If $X\to Y\to Z\to \Sigma X$ is an distinguished triangle in $\Tr$, then 
\begin{equation*}
\mathrm{level}^\Sub_\Tr(Y)\leq \mathrm{level}^\Sub_\Tr(X) + \mathrm{level}^\Sub_\Tr(Z).
\end{equation*}
\item For all objects $X$ and $X'$ in $\Tr$, 
\begin{equation}\label{level}
\mathrm{level}^\Sub_\Tr(X\oplus X')=\max\{\mathrm{level}^\Sub_\Tr(X),\mathrm{level}^\Sub_\Tr(X')\}\leq \mathrm{level}^\Sub_\Tr(X)  + \mathrm{level}^\Sub_\Tr(X').
\end{equation}
\end{enumerate}
It follows by Lemma \ref{lemma1.2} (i) that $w=\mathrm{level}^{\Sub}_\Tr$ defines an exact weight on the class of objects in $\Tr$ and which is not a rank function for the inequality in (\ref{level}) is strict provided that $X$ and $X'$ are both nonzero. 
\end{example}
\begin{example}\label{exam1.4}
Let $\Ab$ be an abelian category (as in \ref{abelian}) and let $\Obj$ be a class of objects in $\Ab$ that contains the zero-object. Let $\mathcal{C}(\Ab)$ be the abelian category of complexes with terms in $\Ab$, and $\mathcal{D}(\Ab)$ be the derived category of $\Ab$  (as in \ref{sub6}), which is a triangulated category by \cite[Cor. 10.4.3]{weibel},  and let $\Obj^\mathcal{D}$ be the class of objects $X^\bullet$ in $\mathcal{D}(\Ab)$ such that the terms of $X^\bullet$ are in $\Obj$. Note that $\Obj^\mathcal{D}$ is closed under the suspension functor $\Sigma = [1]$.  Assume that $w:\Obj^{\mathcal{D}}\to [0,\infty]$ is an exact weight on $\Obj^{\mathcal{D}}$ (in the sense of Definition \ref{exactweight}). Let $0\to X\to Y \to Z\to 0$ be a short exact sequence in $\Ab$ whose terms are in $\Obj$.  Then by Lemma \ref{remA1}, we obtain an distinguished triangle $\overline{X}\to \overline{Y} \to \overline{Z} \to \overline{X}[1]$ in $\mathcal{D}(\Ab)$, where $\overline{X}$, $\overline{Y}$ and $\overline{Z}$ are the complexes in $\mathcal{D}(\Ab)$ concentrated in degree zero corresponding to $X$, $Y$ and $Z$, and which are also in $\Obj^\mathcal{D}$. Since $w$ is exact, we have $w(\overline{X})\leq w(\overline{Y})+w(\overline{Z})$, $w(\overline{Y})\leq w(\overline{X})+w(\overline{Z})$, and $w(\overline{Z})\leq w(\overline{X})+w(\overline{Y})$. Thus the exact weight $w$ on $\Obj^{\mathcal{D}}$ induces an exact weight on $\Obj$ in the sense of \cite[Def. 3.9]{bubenik1}, which we also denote by $w$ by letting for all $X\in \Obj$, $w(X) = w(\overline{X})$.  Assume that $f: X\to Y$ be a morphism in $\Ab$ such that $\ker f$ and $\mathrm{coker}\, f$ are in $\Obj$. By viewing $f$ as a morphism of complexes concentrated in degree zero in $\mathcal{C}(\Ab)$, we get that $C_f$, the mapping cone of $f$ (as in \ref{sub6}), is the complex $0\to X\xrightarrow{f} Y\to 0$ (in degrees $-1$ and $0$), which is quasi-isomorphic (as in  \ref{remA2}) to the complex $0\to \ker f \to \mathrm{coker}\, f \to 0$ (in degrees $-1$ and $0$) with zero differential. This implies that $C_f \cong \overline{\ker f}[1]\oplus \overline{\mathrm{coker}\, f}$ in $\mathcal{D}(\Ab)$. Thus $w(C_f)\leq w(\overline{\ker f}[1]) + w(\overline{\mathrm{coker}\, f})= w(\overline{\ker f})+ w(\overline{\mathrm{coker}\, f}
)$. Let $\gamma$ be an $\mathscr{M}(\Obj)$-zigzag between $X$ and $Y$ as in \cite[Def. 3.7]{bubenik1}. Then by \cite[Lemma 4.1.11]{krause4}, we obtain a $\mathscr{M}(\Obj^\mathcal{D})$-zigzag from $\overline{X}$ to $\overline{Y}$
\begin{equation}\label{zigzag1}
\overline{\gamma}: \overline{X} = \overline{X}_0\xrightarrow{\overline{\gamma}_1} \overline{X}_1\xleftarrow{\overline{\gamma}_2}  \overline{X}_2\xrightarrow{\overline{\gamma}_3}\cdots \xleftarrow{\overline{\gamma}_n} \overline{X}_n = \overline{Y}.
\end{equation}
Moreover, by the arguments in the proof of \cite[Lemma 4.1.12]{krause4}, for all $1\leq k\leq n$, $C_{\overline{\gamma}_k}$ is quasi-isomorphic to the mapping cone of $\gamma_k$. Thus,  by the argument above, we have 
\begin{equation*}
\mathrm{cost}_w^{\mathcal{D}(\Ab)}(\overline{\gamma}) = \sum_{k=1}^n w(C_{\overline{\gamma}_k})\leq \sum_{k=1}^n w(\overline{\ker \gamma_k})+ w(\overline{\mathrm{coker}\, \gamma_k}) =\sum_{k=1}^n w(\gamma_k) = \mathrm{cost}_w^{\Ab}(\gamma),
\end{equation*} 
where $\mathrm{cost}_w^{\Ab}(\gamma)$ is the cost of $\gamma$ as a $\mathscr{M}(\Obj)$-zigzag in the sense of \cite[Def. 3.7]{bubenik1}. If we let 
$d_w^{\mathcal{D}(\Ab)}(\overline{X},\overline{Y})=\inf_{\overline{\gamma}} \mathrm{cost}_w^{\mathcal{D}(\Ab)}(\overline{\gamma})$ and $d_w^\Ab(X,Y)=\inf_\gamma \mathrm{cost}_w^{\Ab}(\gamma)$,  we get $d_w^{\mathcal{D}(\Ab)}(\overline{X},\overline{Y}) \leq d_w^\Ab(X,Y)$. In particular, if $w$ is a rank function, it follows that $d_w^{\mathcal{D}(\Ab)}(\overline{X},\overline{Y})=d_w^\Ab(X,Y)$.
\end{example}

\begin{example}
Assume that $\k$ is a field and let $\A$ be the cyclic Nakayama algebra with a simple left $\A$-module $S_1$ and whose corresponding  indecomposable projective left $\A$-module has composition series $\begin{matrix}S_1\\S_1\\S_1\end{matrix}$. Let $\Obj$ be the class of finitely generated left $\A$-modules. For all left $\A$-modules $X$ in $\Obj$, we define $w(X) = \dim_\k X$. 
It is clear that $w$ defines an exact weight on $\Obj$.  
On the other hand,  it is well-known that the category of finitely generated left $\A$-modules is a {\it Frobenius category} and thus the stable category of finitely generated left $\A$-modules, which is denoted by $\A\textup{-\underline{mod}}$, is a triangulated category with suspension functor $\Sigma X = \Omega ^{-1} X$ (see  \ref{sec8}). Observe that the objects in $\A\textup{-\underline{mod}}$ are also objects in $\Obj$. Moreover, we have an distinguished triangle $S_1\to S_1\to 0\to \begin{matrix}S_1\\S_1\end{matrix}$ in $\A\textup{-\underline{mod}}$ induced by the identity morphism on $S_1$. By the axiom (Tr2) of triangulated categories, we obtain an distinguished triangle $S_1\to 0\to \begin{matrix}S_1\\S_1\end{matrix}\to \begin{matrix}S_1\\S_1\end{matrix}$. Thus $w\left(\begin{matrix}S_1\\S_1\end{matrix}\right) = 2$ and $w(S_1)+w(0) =1$, which proves that $w$ is not an exact weight on the objects in $\A\textup{-\underline{mod}}$.
\end{example}

\subsection{Exact weights induced by cohomological functors}\label{w-coh}

Let $\Tr$ and $\Sigma$, $\mathcal{S}$ be as in the beginning of \S \ref{sec2.2}. In particular, we assume that $\Tr$ is a pretriangulated category.

\begin{definition} \label{def:w_underline}
Let $\Fun: \Tr \to \Ab$ (resp. $\Fun: \Tr^\textup{op}\to \Ab$) be a homological (resp. cohomological) functor (as in \ref{cohfun}), where $\Ab$ is an abelian category. For all objects $X$ and morphisms $f$ in  $\Tr$,  and for all $i\in \Z$, we assume $\Fun^i(X) = \Fun(\Sigma^iX)$ and $\Fun^i(f) = \Fun(\Sigma^if)$. Assume that $\Obj$ is a class of objects in $\Ab$ with a weight $w$ and denote by $\mathcal{L}_{\Tr, \Fun, \Obj,w}$ the class of objects $X$ in $\Tr$ such that $\Fun^i(X)\in \Obj$ and $w(\Fun^i(X))<\infty$ for all $i\in \Z$, and only finitely many of the objects $\Fun^i(X)$ are non-zero. We define a weight $\underline{w}_{\Fun}: \mathcal{L}_{\Tr, \Fun, \Obj,w}\to [0,\infty)$ by
\begin{equation}\label{weightfunct}
\underline{w}_{\Fun}(X)=\left| \sum_{i\in \Z}(-1)^iw(\Fun^i(X))\right |.
\end{equation}   
\end{definition}
Recall that $w$ is said to be {\it additive} if for all short exact sequences $0\to A\to B\to C\to 0$ in $\Ab$ with terms in $\Obj$, $w(B) = w(A) + w(C)$.

\begin{lemma}\label{lem4.1}
Assume that $w:\Obj\to [0,\infty)$ is an additive weight, and that $\Obj$ is closed under kernels and cokernels. Then for all bounded long exact sequences in $\Ab$ of the form 
\begin{equation}\label{longexactab}
\cdots\to A_1\xrightarrow{\alpha_1} A_2\xrightarrow{\alpha_2}A_3\to \cdots \to A_{\ell-1}\xrightarrow{\alpha_{\ell-1}}A_\ell\to \cdots,
\end{equation}
with $A_j\in \Obj$ for all $j\in \Z$, we have
\begin{equation*}
\sum_{j\in \Z} (-1)^jw(A_j)=0.
\end{equation*}

\begin{proof}
Let $j\in \Z$ be fixed. We have a short exact sequence
\begin{equation*}
0\to \ker \alpha_j\to A_j\to \mathrm{coker}\,\alpha_j\to 0.
\end{equation*}
Since $w$ is assumed to be additive, it follows that 
\begin{equation*}
w(A_j)= w(\ker \alpha_j)+w(\mathrm{coker}\,\alpha_j).
\end{equation*}
\noindent
On the other hand, by exactness of (\ref{longexactab}), $\mathrm{coker}\,\alpha_i= \ker\alpha_{i+1}$. Therefore
\begin{align*}
\sum_{j\in \Z}(-1)^jw(A_j)&= \sum_{j\in \Z}(-1)^j\left(w(\ker\alpha_j)+w(\mathrm{coker}\,\alpha_j)\right)\\
&=\sum_{j\in \Z}(-1)^j(w(\ker\alpha_j)+w(\ker \alpha_{j+1}))\\
&=0.
\end{align*}
\end{proof}
\end{lemma}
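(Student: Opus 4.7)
The plan is to split (\ref{longexactab}) into its constituent short exact sequences through kernels and images, apply additivity of $w$ to each, and observe that the resulting alternating sum telescopes to zero.

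Since $\Obj$ is closed under kernels and cokernels, it is also closed under images, because $\mathrm{im}\,\alpha_j = \ker(A_{j+1}\to \mathrm{coker}\,\alpha_j)$. Hence for each $j \in \Z$ the canonical short exact sequence
\begin{equation*}
0 \to \ker\alpha_j \to A_j \to \mathrm{im}\,\alpha_j \to 0
\end{equation*}
has all terms in $\Obj$, and additivity yields $w(A_j) = w(\ker\alpha_j) + w(\mathrm{im}\,\alpha_j)$. Exactness of (\ref{longexactab}) at $A_{j+1}$ identifies $\mathrm{im}\,\alpha_j$ with $\ker\alpha_{j+1}$, so $w(\mathrm{im}\,\alpha_j) = w(\ker\alpha_{j+1})$ by isomorphism invariance of the weight.

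Substituting gives
\begin{equation*}
\sum_{j\in\Z}(-1)^j w(A_j) = \sum_{j\in\Z}(-1)^j w(\ker\alpha_j) + \sum_{j\in\Z}(-1)^j w(\ker\alpha_{j+1}),
\end{equation*}
and reindexing the second sum via $j \mapsto j-1$ flips its sign, so the two sums cancel term by term. The rearrangement is legitimate because boundedness of (\ref{longexactab}) forces all but finitely many $A_j$, and hence all but finitely many $\ker\alpha_j$, to be zero; together with the standing convention $w(0) = 0$, this makes every sum in sight a finite one.

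I expect the only real obstacle to be verifying that the kernels and images all lie in $\Obj$ so that additivity of $w$ can be applied repeatedly; this is handled uniformly by the closure hypothesis. The rest is standard sign bookkeeping, analogous to the classical proof that Euler characteristics vanish on bounded long exact sequences.
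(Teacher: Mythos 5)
Your proof is correct and follows essentially the same route as the paper's: decompose the long exact sequence into the short exact sequences $0\to\ker\alpha_j\to A_j\to \mathrm{im}\,\alpha_j\to 0$, apply additivity, identify $\mathrm{im}\,\alpha_j$ with $\ker\alpha_{j+1}$ by exactness, and telescope. In fact your version is slightly more careful than the paper's, which writes $\mathrm{coker}\,\alpha_j$ where the quotient $A_j/\ker\alpha_j$ is really the image (equivalently the coimage) of $\alpha_j$, and which does not spell out, as you do, why the images lie in $\Obj$.
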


\begin{proposition}\label{prop1.7}
If $w:\Obj\to [0,\infty)$ is additive and $\Obj$ is closed under kernels and cokernels, then $\underline{w}_\Fun: \mathcal{L}_{\Tr, \Fun, \Obj,w}\to [0,\infty)$ is exact.  
\end{proposition}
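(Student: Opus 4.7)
The plan is to verify the two conditions from Lemma \ref{lemma1.2}(i), namely shift-invariance and the triangle inequality for $\underline{w}_\Fun$, after observing that the class $\mathcal{L}_{\Tr,\Fun,\Obj,w}$ is closed under $\Sigma$. The closure under $\Sigma$ is immediate from the identity $\Fun^i(\Sigma X) = \Fun^{i+1}(X)$, which also yields $\underline{w}_\Fun(\Sigma X) = \underline{w}_\Fun(X)$ by re-indexing the alternating sum (the global sign flip is absorbed by the absolute value).

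For the subadditivity condition, I would start with an exact triangle $X \to Y \to Z \to \Sigma X$ whose terms all lie in $\mathcal{L}_{\Tr,\Fun,\Obj,w}$. Because $\Fun$ is cohomological, applying it to the iterated rotations of this triangle produces a long exact sequence in $\Ab$ of the form
\begin{equation*}
\cdots \to \Fun^i(X) \to \Fun^i(Y) \to \Fun^i(Z) \to \Fun^{i+1}(X) \to \cdots,
\end{equation*}
whose terms all lie in $\Obj$ (since $\Obj$ is closed under kernels and cokernels and the terms $\Fun^i(X), \Fun^i(Y), \Fun^i(Z)$ belong to $\Obj$ by hypothesis on $\mathcal{L}$), and which is bounded because only finitely many terms are nonzero.

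Lemma \ref{lem4.1} then says that the alternating sum of the $w$-values of this long exact sequence vanishes. Grouping the terms three at a time and tracking signs yields
\begin{equation*}
\sum_{i\in\Z} (-1)^i \bigl[ w(\Fun^i(X)) - w(\Fun^i(Y)) + w(\Fun^i(Z)) \bigr] = 0.
\end{equation*}
Writing $a = \sum_i (-1)^i w(\Fun^i(X))$, $b = \sum_i (-1)^i w(\Fun^i(Y))$, $c = \sum_i (-1)^i w(\Fun^i(Z))$, this reads $b = a + c$. The ordinary triangle inequality for absolute values gives $|b| \leq |a| + |c|$, which is exactly $\underline{w}_\Fun(Y) \leq \underline{w}_\Fun(X) + \underline{w}_\Fun(Z)$, and symmetrically $|a| \leq |b| + |c|$ and $|c| \leq |a| + |b|$, so in fact all three exactness inequalities hold directly without needing to invoke Lemma \ref{lemma1.2}.

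There isn't really a hard step here; the main thing to get right is the sign bookkeeping when re-indexing the long exact sequence into the linear form required by Lemma \ref{lem4.1}, and to confirm that the intermediate kernels and cokernels produced when applying that lemma stay in $\Obj$ so that $w$ is defined on them—this is precisely why the closure of $\Obj$ under kernels and cokernels is assumed.
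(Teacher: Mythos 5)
Your proposal is correct and follows essentially the same route as the paper: apply the (contravariant) cohomological functor to the rotated triangle to get a bounded long exact sequence with terms in $\Obj$, invoke Lemma \ref{lem4.1} to conclude the alternating sum vanishes, and then read off all three inequalities from the triangle inequality for absolute values. The preliminary appeal to Lemma \ref{lemma1.2}(i) is, as you note yourself, unnecessary, and the paper likewise proves the three inequalities directly.
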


\begin{proof}
Assume that $X\to Y\to Z\to \Sigma X$ is an distinguished triangle in $\Tr$ whose terms are in $\mathcal{L}_{\Tr, \Fun, \Obj,w}$, and assume without loss of generality that $\Fun:\Tr^\textup{op}\to \Ab$ is a cohomological functor.  Then by \ref{cohfun}, we obtain a long exact sequence in $\Ab$ and whose terms are in $\Obj$ given by 
\begin{equation*}\label{co-fun}
\cdots\to \Fun^{i+1}(X)\to \Fun^{i+1}(Y)\to \Fun^{i+1}(Z)\to \Fun^{i}(X)\to \cdots.
\end{equation*}
\noindent
Since $w$ is assumed to be additive, it follows by Lemma \ref{lem4.1} that 
\begin{equation*}
\sum_{i\in \Z}(-1)^iw(\Fun^i(X)) + \sum_{i\in \Z}(-1)^iw(\Fun^i(Z))-\sum_{i\in \Z}(-1)^iw(\Fun^i(Y))=0.
\end{equation*}
\noindent
After taking absolute values, we get $\underline{w}_{\Fun}(X)\leq \underline{w}_{\Fun}(Y)+\underline{w}_{\Fun}(Z)$, $\underline{w}_{\Fun}(Y)\leq \underline{w}_{\Fun}(X)+\underline{w}_{\Fun}(Z)$, and $\underline{w}_{\Fun}(Z)\leq \underline{w}_{\Fun}(X)+\underline{w}_{\Fun}(Y)$. 
%This finishes the proof of Proposition \ref{prop1.7}.
\end{proof}

Assume that $\Tr'$ is another triangulated category and that $\mathscr{E}: \Tr'\to \Tr$ is triangle-equivalence (as in \ref{triangleeq}). Assume that $\G: \Tr'\to \Ab$ is a homological functor that is naturally equivalent to the composition $\Fun\circ \mathscr{E}: \Tr'\to \Ab$, which is also a homological functor for $\mathscr{E}$ takes distinguished triangles in $\Tr'$ into distinguished triangles in $\Tr$.  Let $\mathscr{O}$ be a class of objects in $\Ab$ that is closed under kernels and cokernels, and let $w: \mathscr{O}\to [0,\infty)$ be an additive weight on $\mathscr{O}$. Then the weight $\underline{w}_{\G}:\mathcal{L}_{\Tr', \G, \mathscr{O},w}\to [0,\infty)$ defined by 
\begin{equation*}
\underline{w}_{\G}(X') = \left |\sum_{i\in \Z}(-1)^iw\left(\G^iX'\right) \right |.
\end{equation*}
\noindent
is exact and satisfies that $\underline{w}_{\G}(X')=\underline{w}_{\Fun}(\mathscr{E} X')$ for all objects $X'$ in $\mathcal{L}_{\Tr', \G, \mathscr{O},w}$. 
The following isometry theorem is consequence of the above discussion and the definition of triangle-equivalences (as in \ref{triangleeq}).

\begin{theorem}\label{thm2.12}
Let $\Tr$ and $\Tr'$ be triangulated categories and $\mathscr{E}: \Tr'\to \Tr$ be a triangle-equivalence. Assume that $\Fun: \Tr\to \Ab$ and $\G: \Tr'\to \Ab$ are homological functors such that $\G$ is naturally equivalent to $\Fun \circ \mathscr{E}$, where $\Ab$ is an abelian category. Let $\Obj$ be a class of objects in $\Ab$ that is closed under kernels and cokernels and with an additive weight $w: \Obj\to [0,\infty)$. Then  
\begin{equation}
d_{\underline{w}_{\G}}(X',Y')= d_{\underline{w}_{\Fun}}(\mathscr{E}X',\mathscr{E}Y').
\end{equation}
\end{theorem}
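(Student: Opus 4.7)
The plan is to prove the equality by showing two inequalities, each obtained by pushing a zigzag through $\mathscr{E}$ (respectively, through a quasi-inverse $\mathscr{E}^{-1}$) and checking that the cost is preserved.

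First I will record the ingredients that make the argument go through. Because $\mathscr{E}$ is a triangle-equivalence, it sends exact triangles in $\Tr'$ to exact triangles in $\Tr$ and commutes with the suspension functors up to natural isomorphism; in particular, for every morphism $\alpha': A'\to B'$ in $\Tr'$ completed to an exact triangle $A'\xrightarrow{\alpha'}B'\to C_{\alpha'}\to \Sigma A'$, applying $\mathscr{E}$ yields an exact triangle in $\Tr$ that contains $\mathscr{E}\alpha'$, so by \cite[Lemma 3.1.5]{krause4} we have $C_{\mathscr{E}\alpha'}\cong \mathscr{E}(C_{\alpha'})$. Combined with the natural equivalence $\G\simeq \Fun\circ \mathscr{E}$, this yields for every $X'\in \Tr'$ and every $i\in \Z$ an isomorphism $\G^i(X')\cong \Fun^i(\mathscr{E}X')$ in $\Ab$, so $X'$ belongs to $\mathcal{L}_{\Tr', \G, \Obj, w}$ if and only if $\mathscr{E}X'$ belongs to $\mathcal{L}_{\Tr, \Fun, \Obj, w}$; moreover $\underline{w}_\G(X') = \underline{w}_\Fun(\mathscr{E}X')$, as already noted before the theorem statement.

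For the first inequality $d_{\underline{w}_\Fun}(\mathscr{E}X',\mathscr{E}Y')\leq d_{\underline{w}_\G}(X',Y')$, I fix $\epsilon>0$ and pick an $\mathscr{M}(\mathcal{L}_{\Tr', \G, \Obj, w})$-zigzag $\gamma'$ from $X'$ to $Y'$ in $\Tr'$ with $\mathrm{cost}_{\underline{w}_\G}(\gamma')<d_{\underline{w}_\G}(X',Y')+\epsilon$. Applying $\mathscr{E}$ termwise produces a zigzag $\mathscr{E}\gamma'$ from $\mathscr{E}X'$ to $\mathscr{E}Y'$ whose intermediate objects $\mathscr{E}X'_i$ lie in $\mathcal{L}_{\Tr, \Fun, \Obj, w}$, and whose cones satisfy $C_{\mathscr{E}\gamma'_i}\cong \mathscr{E}(C_{\gamma'_i})$, so each $\mathscr{E}\gamma'_i$ lies in $\mathscr{M}(\mathcal{L}_{\Tr, \Fun, \Obj, w})$. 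Using $\underline{w}_\Fun(\mathscr{E}C_{\gamma'_i})=\underline{w}_\G(C_{\gamma'_i})$ term by term, we get $\mathrm{cost}_{\underline{w}_\Fun}(\mathscr{E}\gamma') = \mathrm{cost}_{\underline{w}_\G}(\gamma')$, and the inequality follows by taking the infimum and letting $\epsilon\to 0$.

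For the reverse inequality, I use a quasi-inverse $\mathscr{E}^{-1}:\Tr\to \Tr'$ of $\mathscr{E}$, which exists and is itself a triangle-equivalence. Given $\epsilon>0$ and an $\mathscr{M}(\mathcal{L}_{\Tr, \Fun, \Obj, w})$-zigzag $\delta$ from $\mathscr{E}X'$ to $\mathscr{E}Y'$ of cost less than $d_{\underline{w}_\Fun}(\mathscr{E}X',\mathscr{E}Y')+\epsilon$, apply $\mathscr{E}^{-1}$ termwise to obtain a zigzag from $\mathscr{E}^{-1}\mathscr{E}X'$ to $\mathscr{E}^{-1}\mathscr{E}Y'$; prepending the natural isomorphism $X'\xrightarrow{\sim}\mathscr{E}^{-1}\mathscr{E}X'$ and appending $\mathscr{E}^{-1}\mathscr{E}Y'\xrightarrow{\sim}Y'$ (both isomorphisms, hence of weight zero and in $\mathscr{M}(\mathcal{L}_{\Tr', \G, \Obj, w})$) turns this into an $\mathscr{M}(\mathcal{L}_{\Tr', \G, \Obj, w})$-zigzag from $X'$ to $Y'$ of the same cost, by the same cone-preservation argument applied to $\mathscr{E}^{-1}$ together with $\G\simeq \Fun\circ\mathscr{E}$. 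Taking the infimum and letting $\epsilon\to 0$ yields the opposite inequality, completing the proof.

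The main obstacle is the bookkeeping in the second step: one must verify that every intermediate object produced by $\mathscr{E}^{-1}$ still lies in the class $\mathcal{L}_{\Tr', \G, \Obj, w}$ and that each cone has the correct $\underline{w}_\G$-value. Both reduce to combining the natural equivalence $\G\cong \Fun\circ\mathscr{E}$ with the fact that $\mathscr{E}$ and $\mathscr{E}^{-1}$ commute with suspension and preserve exact triangles, so no genuinely new input beyond Proposition~\ref{prop1.7} and the definitions is needed.
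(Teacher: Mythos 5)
Your proposal is correct and is essentially the argument the paper intends: the paper only remarks that the theorem ``is a consequence of'' the identity $\underline{w}_{\G}(X')=\underline{w}_{\Fun}(\mathscr{E}X')$ together with the definition of a triangle-equivalence, and your two-inequality argument (transporting zigzags through $\mathscr{E}$ and through a quasi-inverse, using $C_{\mathscr{E}\alpha'}\cong\mathscr{E}(C_{\alpha'})$ and cost preservation) is precisely the omitted detail. No gaps.
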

Observe that we can use dual arguments to obtain a version of Theorem \ref{thm2.12} for the case when we consider cohomological functors instead of homological functors.

Let $\mathrm{Ab}$ denote the category of all abelian groups, and let $S'$ be a fixed object in $\Tr'$. 
Then the representable functor $\Hom_{\Tr}(-, \mathscr{E}S'): \Tr^\textup{op}\to \mathrm{Ab}$ is a cohomological functor by \cite[Lemma 3.1.2]{krause4}. Moreover, the functors $\Hom_{\Tr'}(-,S')$ and $\Hom_\Tr(-,\mathscr{E}S')\circ \mathscr{E}$ are naturally equivalent. Therefore, we obtain the following consequence of Theorem \ref{thm2.12}. 

\begin{corollary}\label{cor2.12}
%Let $\mathrm{Ab}$ be the category of all abelian groups and 
Let $\mathscr{O}$ be a class of 
abelian groups
%objects in $\mathrm{Ab}$ 
that is closed under kernels and cokernels with an additive weight  $w: \mathscr{O}\to [0,\infty)$.  Let $\mathscr{E}: \Tr'\to \Tr$ be a triangle-equivalence. Then for all objects $S'$ in $\Tr'$, we have
\begin{equation*}
d_{\underline{w}_{\Hom_{\Tr'}(-,S')}}(X',Y')= d_{\underline{w}_{\Hom_{\Tr}(-,\mathscr{E}S')}}(\mathscr{E}X',\mathscr{E}Y')
\end{equation*}
\end{corollary}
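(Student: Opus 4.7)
The plan is to deduce Corollary \ref{cor2.12} as a direct specialization of Theorem \ref{thm2.12}. I take the abelian category in Theorem \ref{thm2.12} to be $\mathrm{Ab}$, with the given class of abelian groups $\mathscr{O}$ and its additive weight $w: \mathscr{O} \to [0,\infty)$, and I set the two cohomological functors to be $\Fun = \Hom_{\Tr}(-, \mathscr{E} S'): \Tr \to \mathrm{Ab}$ and $\G = \Hom_{\Tr'}(-, S'): \Tr' \to \mathrm{Ab}$. Once I have verified the hypotheses required by Theorem \ref{thm2.12}, the conclusion is exactly the asserted isometry.

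First, I need to confirm that $\Fun$ and $\G$ are (contravariant) cohomological functors. By \cite[Lemma 3.1.2]{krause4}, for any object $T$ in a triangulated category the functor $\Hom(-, T)$ sends exact triangles to long exact sequences of abelian groups. Applying this with $T = \mathscr{E}S' \in \Tr$ gives that $\Fun$ is cohomological, and applying it with $T = S' \in \Tr'$ gives that $\G$ is cohomological. This is the first piece of data demanded by Theorem \ref{thm2.12}.

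Second, I need the natural equivalence $\G \simeq \Fun \circ \mathscr{E}$. Since $\mathscr{E}: \Tr' \to \Tr$ is a triangle-equivalence, it is in particular fully faithful, so for every object $X'$ in $\Tr'$ the map induced by $\mathscr{E}$,
\begin{equation*}
\Hom_{\Tr'}(X', S') \longrightarrow \Hom_{\Tr}(\mathscr{E} X', \mathscr{E} S'),
\end{equation*}
is a bijection of abelian groups, and functoriality of $\mathscr{E}$ in the first argument makes this bijection natural in $X'$. Thus $\Hom_{\Tr'}(-, S')$ and $\Hom_{\Tr}(-, \mathscr{E}S') \circ \mathscr{E}$ are naturally equivalent, as was already noted in the paragraph preceding the corollary statement.

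With both hypotheses in place, Theorem \ref{thm2.12} applies verbatim and yields
\begin{equation*}
d_{\underline{w}_{\Hom_{\Tr'}(-,S')}}(X',Y') \;=\; d_{\underline{w}_{\Hom_{\Tr}(-,\mathscr{E}S')}}(\mathscr{E}X',\mathscr{E}Y'),
\end{equation*}
which is the corollary. There is no real obstacle in this argument; the corollary is essentially a naming of one particularly useful representable instance of Theorem \ref{thm2.12}, and the only thing to observe is that full faithfulness of $\mathscr{E}$ provides exactly the natural equivalence required as input to that theorem.
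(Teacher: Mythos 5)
Your proof is correct and follows essentially the same route as the paper: identify $\Hom_{\Tr}(-,\mathscr{E}S')$ and $\Hom_{\Tr'}(-,S')$ as contravariant cohomological functors via \cite[Lemma 3.1.2]{krause4}, observe the natural equivalence $\Hom_{\Tr'}(-,S')\simeq \Hom_{\Tr}(-,\mathscr{E}S')\circ\mathscr{E}$ (which you justify, correctly, by full faithfulness of $\mathscr{E}$), and then invoke Theorem~\ref{thm2.12}. The only difference is that you spell out the full-faithfulness argument that the paper leaves implicit.
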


\begin{example}
Let $\Ab$ be an additive category.  
\begin{enumerate}
\item Assume that $\Ab$ is abelian and let $\Obj$ be a class objects in $\Ab$ that is closed under kernels and cokernels with an additive weight $w$, and let $\mathcal{K}(\Ab)$ denote the homotopy category (as in \ref{sub6}), which is also a triangulated category by e.g. \cite[Prop. 10.2.4]{weibel}. 
Then by \cite[Prop. 4.1.3]{krause4} the $0$-th cohomology group $\H^0(M^\bullet)$ of a complex $M^\bullet$ with terms in $\Ab$ defines a homology functor $\H^0: \mathcal{K}(\Ab) \to \Ab$ (as in \ref{cohfun}). Thus by Proposition \ref{prop1.7} we obtain an exact weight $\underline{w}_{\H^0}:\mathcal{L}_{\mathcal{K}(\Ab), \H^0, \Obj,w}\to [0,\infty)$ defined for all $M^\bullet$ in $\mathcal{L}_{\mathcal{K}(\Ab), \H^0, \Obj,w}$ as 
\begin{equation*}
\underline{w}_{\H^0}(M^\bullet) = \left |\sum_{i\in \Z}(-1)^iw(\H^i(M^\bullet))\right |.
\end{equation*}
\item Let $\mathcal{D}(\Ab)$ be the derived category of $\Ab$ (as in \ref{sub6}). Assume that $\mathcal{D}(\Ab)$ is a $\k$-linear category with $\k$ a field (as in \ref{klinear}). Let $S^\bullet$ be a fixed object in $\mathcal{D}(\Ab)$. Then we obtain a cohomological functor $\Hom_{\mathcal{D}(\Ab)}(-,S^\bullet):\mathcal{D}(\Ab)^\textup{op}\to \k\textup{-Mod}$, where $\k\textup{-Mod}$ denotes the category of $\k$-vector spaces. If we denote by $\Obj$ the objects of $\k$-Mod that are finite dimensional $\k$-vector spaces, then by Proposition \ref{prop1.7} we obtain an exact weight $\underline{w}_{\Hom_{\mathcal{D}(\Ab)}(-,S^\bullet)}: \mathcal{L}_{\mathcal{D}(\Ab), \Hom_{\mathcal{D}(\Ab)}(-,S^\bullet),\Obj,\dim_\k}\to [0,\infty)$ defined for all $M^\bullet$ in $\mathcal{L}_{\mathcal{D}(\Ab), \Hom_{\mathcal{D}(\Ab)}(-,S^\bullet),\Obj,\dim_\k}$ by 
\begin{equation*}
\underline{w}_{\Hom_{\mathcal{D}(\Ab)}(-,S^\bullet)}(M^\bullet) = \left |\sum_{i\in \Z}(-1)^i\dim_\k\Hom_{\mathcal{D}(\Ab)}(M^\bullet[-i],S^\bullet)\right |.
\end{equation*}

\end{enumerate}

\end{example}

Combining
Proposition \ref{prop1.7}, 
with the Brown Representability Theorem (Theorem \ref{brownthm}) 
we have the following.

\begin{theorem}\label{brownexam}
%Assume that $\mathrm{Ab}$, $\mathscr{O}$ and $w$ are as in Theorem \ref{thm2.12}. 
Let $\mathscr{O}$ be a class of abelian groups
that is closed under kernels and cokernels with an additive weight  $w: \mathscr{O}\to [0,\infty)$.
Let $\Tr$ be a perfectly generated (as in \ref{brown}) or more generally, compactly generated (as in \ref{compact}) triangulated category. 
Let
%If 
$\Fun: \Tr^\textup{op}\to \mathrm{Ab}$ be a cohomological functor that  takes direct sums in $\Tr$ to products in $\mathrm{Ab}$.
Then there exists an object $S$ in $\Tr$ such that for all objects $X$ and $Y$ in $\Tr$
\begin{equation*}\label{isometry}
d_{\underline{w}_{\Fun}}(X,Y) = %d_{\underline{w}_{\Hom_\Tr(-,S)}}(X,Y).
d_{\underline{w}_S}(X,Y),
\end{equation*}
where
\begin{equation*}
\underline{w}_S(X) = \left |\sum_{i\in \Z}(-1)^iw\left(\Hom_{\Tr}(\Sigma^{-i}X,S)\right)\right |.
\end{equation*}
\end{theorem}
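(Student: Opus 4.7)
The proof is essentially a direct application of Brown representability combined with the naturality of the weight construction in Definition \ref{def:w_underline}. Here is the plan.

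First, I apply Brown representability. Since $\Tr$ is perfectly generated (or compactly generated), and $\Fun \colon \Tr \to \mathrm{Ab}$ is a contravariant cohomological functor that sends coproducts in $\Tr$ to products in $\mathrm{Ab}$, Theorems \ref{brownthm} and \ref{brownthm2} yield an object $S \in \Tr$ together with a natural isomorphism of functors $\Fun \cong \Hom_{\Tr}(-, S)$. In particular, for every $i \in \Z$ and every object $X \in \Tr$, evaluating at $\Sigma^i X$ and unwinding the convention $\Fun^i(X) = \Fun(\Sigma^i X)$ gives a natural isomorphism of abelian groups
\begin{equation*}
\Fun^i(X) \;\cong\; \Hom_{\Tr}(\Sigma^i X,\, S).
\end{equation*}

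Next, I transfer this identification to the weights. Since $w$ is a weight on $\mathscr{O}$, it is invariant under isomorphism, so $w(\Fun^i(X)) = w(\Hom_{\Tr}(\Sigma^i X, S))$ whenever either side is defined, and consequently the two classes $\mathcal{L}_{\Tr, \Fun, \mathscr{O}, w}$ and $\mathcal{L}_{\Tr, \Hom_{\Tr}(-, S), \mathscr{O}, w}$ coincide. Substituting termwise into (\ref{weightfunct}) and then reindexing the sum by $j = -i$ (using $(-1)^{-j} = (-1)^j$) yields
\begin{equation*}
\underline{w}_{\Fun}(X) \;=\; \Bigl|\sum_{i \in \Z}(-1)^i w(\Hom_{\Tr}(\Sigma^i X, S))\Bigr| \;=\; \Bigl|\sum_{j \in \Z}(-1)^j w(\Hom_{\Tr}(\Sigma^{-j} X, S))\Bigr| \;=\; \underline{w}_S(X)
\end{equation*}
for every $X$ in the common domain. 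Exactness of $\underline{w}_S$ is inherited from exactness of $\underline{w}_{\Fun}$, which itself follows from Proposition \ref{prop1.7} applied to the additive weight $w$ on $\mathscr{O}$.

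Finally, since the two weights agree pointwise on the same class $\mathcal{S} := \mathcal{L}_{\Tr, \Fun, \mathscr{O}, w} = \mathcal{L}_{\Tr, \Hom_{\Tr}(-, S), \mathscr{O}, w}$, the induced classes of morphisms $\mathscr{M}(\mathcal{S})$ are identical, and the cost of every $\mathscr{M}(\mathcal{S})$-zigzag from $X$ to $Y$ computed with $\underline{w}_{\Fun}$ equals its cost computed with $\underline{w}_S$. Taking infima yields $d_{\underline{w}_{\Fun}}(X, Y) = d_{\underline{w}_S}(X, Y)$, as required. There is no substantive obstacle beyond correctly invoking Brown representability and matching the indexing conventions in the alternating sum; the only minor bookkeeping point is the sign reindexing $(-1)^i \leftrightarrow (-1)^{-i}$, which is automatic inside the absolute value.
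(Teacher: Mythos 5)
Your proposal is correct and follows exactly the route the paper intends: the paper gives no explicit proof beyond the phrase ``combining Proposition \ref{prop1.7} with Brown representability,'' and your argument is precisely the fleshed-out version of that — obtain $S$ with $\Fun\cong\Hom_{\Tr}(-,S)$, use isomorphism-invariance of $w$ to identify the two weights termwise (the reindexing $i\mapsto -i$ with $(-1)^{-i}=(-1)^i$ handles the $\Sigma^{-i}$ in the statement), and conclude that the zigzag classes and costs, hence the path metrics, coincide.
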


Examples of compactly generated triangulated categories  are  $\mathcal{D}(A\textup{-Mod})$ the derived category of modules over an associative ring  $A$ (see \cite[\S 5.8]{krause3}) as well as $\mathcal{D}(\mathrm{Qcoh}\,\mathbb{X})$ the derived category of quasi-coherent sheaves on a scheme $\mathbb{X}$ (see  \cite[pg. 317]{krause4}).

\subsection{Equivalent characterizations of exact weights}

Let $\Tr$ and $\Sigma$, $\mathcal{S}$ and $w$ be as in the beginning of \S \ref{sec2.2}. In particular, assume that $\Tr$ is a pretriangulated category.

The following result gives a version of \cite[Thm. 3.28]{bubenik1} for pretriangulated categories. 

\begin{theorem}\label{thm1.5}
Assume that $\mathcal{S}$ is closed under $\Sigma$ and that for all distinguished triangles $X\to Y\to Z\to\Sigma X$ in $\Tr$, if two of $\{X,Y,Z\}$ are in $\mathcal{S}$, then so is the third.  Then the following statements are equivalent:
\begin{enumerate}
\item $w$ is exact;
\item $w$ is stable, and $w(\Sigma X) =w(X)$ for all objects $X$ in $\mathcal{S}$;
\item $w$ lower bounds its path metric, and $w(\Sigma X)\leq w(X)$ for all objects $X$ in $\mathcal{S}$.
\end{enumerate}
\end{theorem}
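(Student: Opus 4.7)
My plan is to close the cycle (i)$\Rightarrow$(ii)$\Rightarrow$(iii)$\Rightarrow$(i). The overall strategy follows the template of \cite[Thm.~3.28]{bubenik1} for exact categories, with exact triangles replacing admissible short exact sequences and the two-out-of-three hypothesis on $\mathcal{S}$ playing the role of admissibility. Lemmas~\ref{lemma1.1} and~\ref{lemma1.2} will do much of the bookkeeping.

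For (i)$\Rightarrow$(ii), the suspension invariance $w(\Sigma X)=w(X)$ is handed over by Lemma~\ref{lemma1.2}(i.a), so stability is the real content. The easy direction $|d_w|_{\mathcal{S}}\leq w$ is Lemma~\ref{lemma1.1}(i). For the reverse, I take an arbitrary $\mathscr{M}(\mathcal{S})$-zigzag $X=X_0,X_1,\dots,X_n=0$ and apply the two-out-of-three hypothesis inductively: since $X_{i-1}\in\mathcal{S}$ and $C_{\gamma_i}\in\mathcal{S}$, the third object $X_i$ of the exact triangle supporting $\gamma_i$ also lies in $\mathcal{S}$. Exactness of $w$ on this triangle then gives $|w(X_{i-1})-w(X_i)|\leq w(C_{\gamma_i})$, and telescoping produces $w(X)=|w(X_0)-w(X_n)|\leq \mathrm{cost}_w(\gamma)$. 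Taking the infimum over $\gamma$ yields $w(X)\leq |d_w|(X)$, as required.

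For (ii)$\Rightarrow$(iii), the inequality $w(\Sigma X)\leq w(X)$ is trivial from equality, and the path-metric lower bound drops out of the triangle inequality for the Lawvere metric: $|w(X)-w(Y)|=|d_w(X,0)-d_w(Y,0)|\leq d_w(X,Y)$. For (iii)$\Rightarrow$(i), given an exact triangle $X\xrightarrow{\alpha}Y\xrightarrow{\beta}Z\to \Sigma X$ with all terms in $\mathcal{S}$, I plan to exhibit three length-2 zigzags through $0$ and apply the path-metric lower bound between each endpoint and $0$. The key computation is that the canonical morphism $0\to W$ has cone $W$ while $W\to 0$ has cone $\Sigma W$. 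The three zigzags
\[
X\xrightarrow{\alpha}Y\leftarrow 0,\qquad Y\xleftarrow{\alpha}X\to 0,\qquad Z\xleftarrow{\beta}Y\to 0
\]
then carry costs $w(Z)+w(Y)$, $w(Z)+w(\Sigma X)$, and $w(\Sigma X)+w(\Sigma Y)$ respectively. Absorbing the suspended terms via the one-sided hypothesis $w(\Sigma W)\leq w(W)$ and applying the path-metric lower bound delivers $w(X)\leq w(Y)+w(Z)$, $w(Y)\leq w(X)+w(Z)$, and $w(Z)\leq w(X)+w(Y)$, which is exactness of $w$.

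The main obstacle is the (iii)$\Rightarrow$(i) step: each individual calculation is elementary, but the three zigzags must be engineered so that every intermediate cone lies in $\mathcal{S}$ (which is precisely why closure of $\mathcal{S}$ under $\Sigma$ is needed) and so that every ``extra'' term produced by a leg ending at $0$ takes the form $\Sigma W$, where only the one-sided suspension hypothesis of (iii) suffices to control it. A naive zigzag routed through a non-canonical intermediate object would generically produce cones whose weights cannot be absorbed by the asymmetric bound $w\circ\Sigma\leq w$, so the design of the three zigzags is where all the subtlety of the implication sits.
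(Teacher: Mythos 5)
Your plan is correct: every step checks out against Definition~\ref{exactweight} and Lemmas~\ref{lemma1.1} and~\ref{lemma1.2}, and the toolbox is the same as the paper's. The organization, however, is genuinely different in two respects. First, the paper never proves (iii)$\Rightarrow$(i) directly; it establishes (i)$\Rightarrow$(ii), (ii)$\Rightarrow$(i), (ii)$\Rightarrow$(iii) and (iii)$\Rightarrow$(ii), the last step upgrading $w(\Sigma X)\leq w(X)$ to an equality via the exact triangle $X\to 0\to \Sigma X\to \Sigma X$ and recovering stability, after which (ii)$\Rightarrow$(i) is reused. Your direct (iii)$\Rightarrow$(i) via the three length-two zigzags through $0$ shows that the bare lower bound $w(X)=|w(X)-w(0)|\leq d_w(X,0)$ together with the one-sided inequality $w\circ\Sigma\leq w$ already forces all three subadditivity inequalities; the cone identifications ($C_{0\to W}\cong W$, $C_{W\to 0}\cong \Sigma W$, and $C_\beta\cong \Sigma X$ after rotating by (Tr2)) are exactly right, and closure of $\mathcal{S}$ under $\Sigma$ keeps every cone in $\mathcal{S}$, so the zigzags are legitimate. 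Second, for (i)$\Rightarrow$(ii) the paper argues by contradiction with a zigzag of ``minimal cost,'' whereas your forward telescoping $w(X)=|w(X_0)-w(X_n)|\leq \sum_i |w(X_{i-1})-w(X_i)|\leq \sum_i w(C_{\gamma_i})$, with the two-out-of-three hypothesis placing each $X_i$ in $\mathcal{S}$ inductively, is cleaner and sidesteps the question of whether a minimal-cost zigzag is actually attained; the underlying estimate is identical. One cosmetic point: your zigzags $Y\xleftarrow{\alpha}X\to 0$ and $Z\xleftarrow{\beta}Y\to 0$ begin with a backward arrow, which the template \eqref{zigzag0} does not literally allow, but padding with identity morphisms of cost zero repairs this (the paper is equally informal on the same point).
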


\begin{proof}
Assume that $w$ is exact. Note that by Lemma \ref{lemma1.1} (i), in order to prove that $w$ is stable it is enough to prove that $|d_w|_\mathcal{S} \geq w$. Assume by contradiction that $|d_w|_\mathcal{S} < w$. Therefore, there exists an object $X$ in $\mathcal{S}$ and a $\mathscr{M}(\mathcal{S})$-zigzag $\gamma$ from $X$ to $0$ such that $\mathrm{cost}_w(\gamma)< w(X)$. 
Then let $X\in \mathcal{S}$ and let $\gamma$ be a $\mathscr{M}(\mathcal{S})$-zigzag from $X$ to $0$  of minimal cost with the property that $\mathrm{cost}_w(\gamma)< w(X)$, and without loss of generality, assume that $\gamma$ is of the form 
\begin{equation*}
\gamma: X = X_0\xrightarrow{\gamma_1} X_1\xleftarrow{\gamma_2}  X_2\xrightarrow{\gamma_3}\cdots \xleftarrow{\gamma_n} X_n = 0
\end{equation*}
\noindent
Then by the axiom (Tr1) of pretriangulated categories (as in \ref{traingcat}), the morphism $\gamma_1$ fits in an distinguished triangle 
\begin{equation}\label{triagproof}
X\xrightarrow{\gamma_1} X_1\to C_{\gamma_1}\to \Sigma X, 
\end{equation}
where $C_{\gamma_1}$ is in $\mathcal{S}$.  Note that $X_1$ is also an object in $\mathcal{S}$ by hypothesis. This implies that  
\begin{equation*}
\gamma': X_1\xleftarrow{\gamma_2}  X_2\xrightarrow{\gamma_3}\cdots \xleftarrow{\gamma_n} X_n = 0
\end{equation*}
is a $\mathscr{M}(\mathcal{S})$-zigzag from $X_1$ to $0$ with $\mathrm{cost}_w(\gamma') < \mathrm{cost}_w(\gamma)$. By the minimality of $\gamma$, we have that $\mathrm{cost}_w(\gamma')\geq w(X_1)$.  Since $w$ is exact by hypothesis, we obtain by using the distinguished triangle (\ref{triagproof}) that $w(X)\leq w(X_1)+w(C_{\gamma_1})\leq \mathrm{cost}_w(\gamma')+w(C_{\gamma_1}) = \mathrm{cost}_w(
\gamma)$, which is a contradiction. Thus $|d_w|_\mathcal{S} \geq w$. The second statement in (i) follows from Lemma \ref{lemma1.2} (i). This proves (i) $\Rightarrow$ (ii). Next assume that the statement in (ii) holds and let $X\xrightarrow{\alpha} Y\to Z\to \Sigma X$ be an distinguished triangle in $\Tr$ whose terms are in $\mathcal{S}$. Then by using the hypothesis we have 
\begin{equation*}
w(X)= d_w(X,0) \leq d_w(X,Y)+d_w(Y,0)= d_w(X,Y) + w(Y)
\end{equation*} 
Note also that $\alpha: X\to Y$ induces a $\mathscr{M}(\mathcal{S})$-zigzag between $X$ and $Y$ with $\mathrm{cost}_w(\alpha) = w(Z)$. Thus $d_w(X,Y)\leq w(Z)$, and therefore $w(X)\leq w(Z) +w(Y)$. Moreover, 
\begin{equation*}
w(Y)=d_w(Y,0)\leq d_w(Y,X)+d_w(X,0)=d_w(X,Y)+w(X)\leq w(Z)+w(X).
\end{equation*}
On the other hand, by using the axiom (Tr2) of triangulated categories (as in \ref{traingcat}), we obtain the distinguished triangle $Y\to Z\to \Sigma X\to \Sigma Y$ and thus by hypothesis and by using a similar argument as above, we have
\begin{align*}
w(Z)&=d_w(Z,0)\leq d_w(Z,Y)+d_w(Y,0)= d_w(Y,Z) + w(Y) \leq w(\Sigma X)+w(Y)\leq w(X)+w(Y).
\end{align*}
This proves the implication (ii) $\Rightarrow$ (i).  Next we prove the implication (ii) $\Rightarrow$ (iii). By hypothesis, we have that for $X$ and $Y$ in $\mathcal{S}$, $|w(X)-w(Y)|= |d_w(X,0)-d_w(Y,0)|\leq d_w(X,Y)$ and thus $w$ lower bounds its path metric. The second statement in (iii) follows trivially from the second statement in (ii). 
Finally, we prove (iii) $\Rightarrow$ (ii). Assume that $w$ lower bounds its path metric and let $X$ be a fixed object in $\mathcal{S}$. Then $w(X) = |w(X)-w(0)|\leq d_w(X,0)= |d_w|_\mathcal{S}(X)$. Thus by Lemma \ref{lemma1.1} (i), $|d_w|_\mathcal{S}(X)=w(X)$. On the other hand, by looking at the distinguished triangle $X\to 0 \to \Sigma X\to \Sigma X$, we further have $w(X)\leq d_w(X,0)\leq w(\Sigma X)$. This implies with the second hypothesis in (iii) that $w(X)= w(\Sigma X)$. 
%This finishes the proof of Theorem \ref{thm1.5}.
\end{proof}

\subsection{Wasserstein distances for triangulated categories (cf. \cite[\S 5]{bubenik1})}

Assume that $\Tr$ is a triangulated category with arbitrary direct sums and with a metric $d$, and let $\Tr_\ell$ be the class of objects in  $\Tr$ that are direct sums of objects that have local endomorphism rings. 

\begin{definition}
For all objects $X$ and $Y$ in $\Tr_\ell$ and $1\leq p\leq \infty$ define
\begin{equation*}
W_p(d)(X,Y)=\inf \|\{d(X_a,Y_a)\}_{a\in A}\|_p,
\end{equation*}
where the infimum is taken over all isomorphisms $X\cong \bigoplus_{a\in A} X_a$ and $Y\cong \bigoplus_{a\in A} Y_a$ and each $X_a$ (resp. $Y_a$) is either zero or has a local endomorphism ring. 
\end{definition}

Assume that $\Tr_\ell$ has an exact weight $w$. 
If $d_w$ is the associated path metric to $w$ and $p\in [0,\infty]$, then it follows from Lemma \ref{lemma1.2} (ii) that for all objects $X$, $Y$ in $\Tr_\ell$ and $i\in \Z$,
\begin{equation*}\label{rem2.1}
W_p(d_w)(\Sigma^iX,\Sigma^iY)=W_p(d_w)(X,Y).
\end{equation*}
The following result can be obtained by adjusting the arguments in the proofs of \cite[Lemma 5.3, Prop. 5.4 \& Prop. 5.13]{bubenik1}. 

\begin{proposition}\label{prop3.10}
Let $w$ be an exact weight on $\Tr_\ell$, let $d = d_w$ be the induced path metric on $\Tr_\ell$, and let $p\in [1,\infty]$ be fixed. Then we have the following.
\begin{enumerate}
\item If $X\cong \bigoplus_{a\in A}X_a$ and $Y\cong \bigoplus_{b\in B}Y_b$, where each $X_a$ and $Y_b$ are indecomposable in $\Tr_\ell$. Then 
\begin{equation*}
W_p(d)(X,Y)=\inf_\varphi\left\|\left(\|(d(X_c,N_{\varphi(c)}))_{c\in C}\|_p,\|(d(X_c,0))_{c\in A- C}\|_p,\|(d(0,Y_b))_{b\in B- \varphi(C)}\|_p\right)\right\|_p,
\end{equation*} 
where the infimum is over all matchings from $A$ to $B$, i.e. injective maps $\phi: C\to B$ with $C\subseteq A$.
\item $W_p(d)$ is a $p$-subadditive metric on $\Tr_\ell$, i.e.
\begin{equation*}
W_p(d)\left(\bigoplus_{a\in A} X_a,\bigoplus_{a\in A} Y_a\right)\leq \|\{W_p(d)(X_a, Y_a)\}\|_p.
\end{equation*}
\end{enumerate}
\end{proposition}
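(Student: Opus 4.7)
My plan is to adapt the arguments of \cite[Lemma 5.3, Prop. 5.4 \& Prop. 5.13]{bubenik1} to the triangulated setting. The key structural input is the Krull--Remak--Schmidt--Azumaya theorem, which guarantees that every object in $\Tr_\ell$ admits a decomposition into indecomposables with local endomorphism rings, unique up to isomorphism and reindexing of the summands. This uniqueness is what allows one to translate freely between arbitrary admissible decompositions of $X$ and $Y$ (into summands that are zero or indecomposable) and matchings between the indecomposable summands of $X$ and those of $Y$.

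For part (i), I would first fix indecomposable decompositions $X \cong \bigoplus_{c\in C} X_c$ and $Y \cong \bigoplus_{b \in B} Y_b$. Given any decomposition $X \cong \bigoplus_{a \in A} X_a$ and $Y \cong \bigoplus_{a \in A} Y_a$ realizing a cost in the definition of $W_p(d)$, Krull--Schmidt supplies bijections of $\{a \in A : X_a \neq 0\}$ with $C$ and of $\{a \in A : Y_a \neq 0\}$ with $B$; the intersection singles out $C_0 \subseteq C$ and an injection $\varphi \colon C_0 \to B$, and splitting the index set into ``matched,'' ``unmatched in $X$,'' and ``unmatched in $Y$'' pieces rewrites the cost precisely as the expression displayed in (i). Conversely, starting from a matching $\varphi \colon C_0 \to B$ one builds an admissible decomposition by setting $A = C \sqcup (B \setminus \varphi(C_0))$ and padding with zero summands where appropriate; this realizes the matching's cost and yields the reverse inequality. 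Taking infima over admissible decompositions on one side and over partial injections on the other then gives equality.

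For part (ii), the strategy is the standard ``glue near-optimal decompositions'' argument. Given $\epsilon > 0$, choose for each $a$ admissible decompositions $X_a \cong \bigoplus_{c \in C_a} U_{a,c}$ and $Y_a \cong \bigoplus_{c \in C_a} V_{a,c}$ with
\begin{equation*}
\bigl\|(d(U_{a,c},V_{a,c}))_{c\in C_a}\bigr\|_p \;\leq\; W_p(d)(X_a,Y_a) + \epsilon_a,
\end{equation*}
where the $\epsilon_a$ are chosen small enough that $\|(\epsilon_a)_a\|_p \leq \epsilon$. Re-indexing by the disjoint union $\bigsqcup_a C_a$ yields an admissible decomposition of $\bigoplus_a X_a$ and $\bigoplus_a Y_a$ (here we use the hypothesis that $\Tr$ has arbitrary direct sums and that $\Tr_\ell$ is closed under them), whose cost in $p$-norm is bounded by $\|(W_p(d)(X_a,Y_a) + \epsilon_a)_a\|_p \leq \|(W_p(d)(X_a,Y_a))_a\|_p + \epsilon$ via the triangle inequality for $\|\cdot\|_p$. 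Letting $\epsilon \to 0$ establishes $p$-subadditivity, and the remaining metric axioms (symmetry, vanishing on isomorphic objects, and triangle inequality) follow exactly as in \cite[Lemma 5.3]{bubenik1}, using that $d = d_w$ is itself a metric by Lemma~\ref{lemma1.1}.

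I expect the main obstacle to be purely bookkeeping: ensuring that all manipulations with possibly infinite index sets are legitimate, particularly when $p = \infty$. For $p < \infty$, finiteness of the right-hand side in (ii) forces $W_p(d)(X_a,Y_a) > 0$ for at most countably many $a$, so the $\epsilon_a$ can be distributed via a standard $\epsilon/2^n$ argument; for $p = \infty$ the constant choice $\epsilon_a = \epsilon$ suffices. The second subtle point is confirming in (i) that every partial injection $\varphi$ genuinely corresponds to an admissible decomposition, which uses the closure of $\Tr_\ell$ under direct sums and the presence of the zero object to absorb unmatched indices.
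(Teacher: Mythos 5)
Your proposal is correct and follows essentially the same route as the paper, which gives no written proof but simply states that the result "can be obtained by adjusting the arguments in the proofs of \cite[Lemma 5.3, Prop. 5.4 \& Prop. 5.13]{bubenik1}" --- exactly the adaptation (Krull--Schmidt to pass between admissible decompositions and matchings for (i), gluing near-optimal decompositions for (ii)) that you carry out. Your explicit attention to the infinite-index bookkeeping is a useful supplement to what the paper leaves implicit.
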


\section{Application to persistence modules}

In this section, we consider the homotopy and derived categories of persistence modules indexed by a small category whose set of objects is equipped with a measure.
We
%extend the results in \cite[\S 4]{bubenik1}, and 
prove our third main result (Theorem \ref{thm4.6}) that provides an analog to \cite[Thm. 1.2]{bubenik1} for these triangulated categories.

\subsection{Exact weights for triangulated categories of persistence modules}

Let $\Po$ be a small category whose set of objects $P=\Ob(\Po)$ has a measure $\mu$. Assume that $\Ab$ is an abelian category, and $\Obj$ is a class of objects in $\Ab$ with an additive weight $w$, and which is closed under kernels and cokernels.  Let $\Ab^{\Po}$ be the abelian category of persistence modules, i.e., covariant functors $M: \Po\to \Ab$.  Let $\Tr$ be either $\mathcal{K}(\Ab^{\Po})$ or $\mathcal{D}(\Ab^{\Po})$. Let $\mathscr{F}=\{\Fun_p\}_{p\in P}$ be a set of homological functors $\Fun_p: \Tr\to \Ab$ (resp. cohomological functors $\Fun_p: \Tr^\textup{op}\to \Ab$) indexed by $P$. 

\begin{definition}
Let $M^\bullet$ be an object in $\Tr$ that is in $\mathcal{L}_p = \mathcal{L}_{\Tr, \Fun_p, \Obj,w}$ for all $p\in P$ (see Definition \ref{def:w_underline}).  
We define a weight $\underline{w}_{\mathscr{F}}(M^\bullet): P\to [0,\infty)$ induced by $w$ and $\mathscr{F}$ as 
\begin{equation}
\underline{w}_{\mathscr{F}}(M^\bullet)(p) = \underline{w}_{\Fun_p}(M^\bullet),
\end{equation}
where the right hand side is given by Definition~\ref{def:w_underline}.
\end{definition}

The following result follows directly from the definition of $\underline{w}_{\mathscr{F}}$ and the exactness of $\underline{w}_{\Fun_p}$ for all $p\in P$.

\begin{lemma}\label{lem5.1}
Assume that $w$ is an additive weight. Let  $M^\bullet\to M'^\bullet\to M''^\bullet\to M^\bullet[1]$ be an distinguished triangle in $\Tr$ such that for all $p\in P$, $M^\bullet, M'^\bullet$ and $M''^\bullet$ are in $\mathcal{L}_p$. If $p\in P$, then we have the following:
\begin{enumerate}
\item $\underline{w}_{\Fb}(M^\bullet)(p)\leq \underline{w}_{\Fb}(M'^\bullet)(p) + \underline{w}_{\Fb}(M''^\bullet)(p)$;
\item  $\underline{w}_{\Fb}(M'^\bullet)(p)\leq \underline{w}_{\Fb}(M^\bullet)(p) + \underline{w}_{\Fb}(M''^\bullet)(p)$; 
\item $\underline{w}_{\Fb}(M''^\bullet)(p)\leq \underline{w}_{\Fb}(M^\bullet)(p) + \underline{w}_{\Fb}(M'^\bullet)(p)$. 
\end{enumerate}
\end{lemma}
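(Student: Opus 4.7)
The plan is to reduce the three inequalities for $\underline{w}_{\Fb}$ at a fixed $p \in P$ to the corresponding inequalities for the single exact weight $\underline{w}_{\Fun_p}$ on the triangulated category $\Tr$, which were already established in Proposition~\ref{prop1.7}.

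First I would fix an arbitrary $p \in P$ and unwind the definition of $\underline{w}_{\mathscr{F}}$. By construction,
\begin{equation*}
\underline{w}_{\Fb}(M^\bullet)(p) = \underline{w}_{\Fun_p}(M^\bullet), \quad \underline{w}_{\Fb}(M'^\bullet)(p) = \underline{w}_{\Fun_p}(M'^\bullet), \quad \underline{w}_{\Fb}(M''^\bullet)(p) = \underline{w}_{\Fun_p}(M''^\bullet),
\end{equation*}
so each of (i), (ii), (iii) becomes the corresponding subadditivity statement for $\underline{w}_{\Fun_p}$ applied to the exact triangle $M^\bullet \to M'^\bullet \to M''^\bullet \to M^\bullet[1]$.

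Next, I would verify the hypotheses needed to invoke Proposition~\ref{prop1.7}. The setup of this subsection assumes $\Obj \subseteq \Ab$ is closed under kernels and cokernels, and the hypothesis of the lemma itself assumes that $w$ is additive; since each $\Fun_p$ is a contravariant cohomological functor $\Tr \to \Ab$, Proposition~\ref{prop1.7} applies and yields that $\underline{w}_{\Fun_p} : \mathcal{L}_p \to [0,\infty)$ is exact in the sense of Definition~\ref{exactweight}. The assumption that each of $M^\bullet$, $M'^\bullet$, $M''^\bullet$ lies in $\mathcal{L}_p$ ensures all three values are finite and the exactness of $\underline{w}_{\Fun_p}$ can be applied to this particular triangle.

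Applying the three inequalities from Definition~\ref{exactweight} for $\underline{w}_{\Fun_p}$ to the given triangle then yields (i), (ii) and (iii) after substituting the identifications above. Since $p$ was arbitrary, the proof is complete. There is no real obstacle here: the lemma is essentially a bookkeeping consequence of Proposition~\ref{prop1.7} applied pointwise in $p$, and the main thing to check is that the hypotheses of that proposition are inherited from the standing assumptions of the subsection together with the additivity of $w$ assumed in the statement.
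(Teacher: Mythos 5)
Your proposal is correct and matches the paper's own justification: the paper gives no separate proof, stating only that the lemma ``follows directly from the definition of $\underline{w}_{\mathscr{F}}$ and the exactness of $\underline{w}_{\Fun_p}$ for all $p\in P$,'' i.e.\ exactly the pointwise reduction to Proposition~\ref{prop1.7} that you carry out. Your additional check that the standing hypotheses of the subsection (closure of $\Obj$ under kernels and cokernels) together with the additivity of $w$ supply the hypotheses of Proposition~\ref{prop1.7} is the right bookkeeping and is all that is needed.
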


\begin{definition}\label{cohmuint}
We denote by $\mathcal{T}^{\Tr}_{\mathscr{O},\mu,\underline{w}_\Fb}$ the class of objects $M^\bullet$ in $\Tr$ that are in $\mathcal{L}_p$ for all $p\in P$ such that $\underline{w}_\Fb(M^\bullet)(p)$ is $\mu$-integrable.  For all objects $M^\bullet$ in $\mathcal{T}^{\Tr}_{\mathscr{O},\mu,\underline{w}_\Fb}$, we let
\begin{equation}
(\mu \circ \underline{w}_\Fb)(M^\bullet)  = \int_{p\in P}\underline{w}_\Fb(M^\bullet)(p)d\mu(p)= \int_{P}\underline{w}_\Fb(M^\bullet)d\mu.
\end{equation}  
\end{definition}

\begin{lemma}\label{lem4.3}
If $w$ is an additive weight on $\mathscr{O}$, then $\mu \circ \underline{w}_\Fb$ is an exact weight on $\mathcal{T}^{\Tr}_{\mathscr{O},\mu,\underline{w}_\Fb}$.  
\end{lemma}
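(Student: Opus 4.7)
The plan is to reduce the exactness of $\mu\circ \underline{w}_\Fb$ to the pointwise exactness of each $\underline{w}_{\Fun_p}$, which is already packaged in Lemma \ref{lem5.1}, and then pass to integrals via monotonicity and linearity of the Lebesgue integral.

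First I would check the basic weight conditions: since $\underline{w}_{\Fun_p}(0)=0$ and $\underline{w}_{\Fun_p}$ is invariant under isomorphism for every $p\in P$, the function $\underline{w}_\Fb(0)$ is identically zero on $P$, so $(\mu\circ \underline{w}_\Fb)(0)=0$, and likewise $(\mu\circ \underline{w}_\Fb)$ is invariant under isomorphism. Consequently $\mu\circ\underline{w}_\Fb$ is a weight on $\mathcal{T}^{\Tr}_{\mathscr{O},\mu,\underline{w}_\Fb}$ in the sense of Section 2.1.

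Next I would verify the three subadditivity inequalities required by Definition \ref{exactweight}. Let
\begin{equation*}
M^\bullet\to M'^\bullet\to M''^\bullet\to M^\bullet[1]
\end{equation*}
be an exact triangle in $\Tr$ with all three terms in $\mathcal{T}^{\Tr}_{\mathscr{O},\mu,\underline{w}_\Fb}$; then in particular every term lies in $\mathcal{L}_p$ for each $p\in P$. Applying Lemma \ref{lem5.1} pointwise for each $p\in P$ yields, as functions $P\to[0,\infty)$,
\begin{equation*}
\underline{w}_\Fb(M^\bullet)\leq \underline{w}_\Fb(M'^\bullet)+\underline{w}_\Fb(M''^\bullet),
\end{equation*}
and the two analogous inequalities obtained by permuting the roles of $M^\bullet,M'^\bullet,M''^\bullet$. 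Integrating each inequality over $P$ against $\mu$ and using monotonicity together with linearity of the integral (valid because each integrand is nonnegative and $\mu$-integrable by the definition of $\mathcal{T}^{\Tr}_{\mathscr{O},\mu,\underline{w}_\Fb}$) produces exactly the three inequalities
\begin{align*}
(\mu\circ\underline{w}_\Fb)(M^\bullet)&\leq (\mu\circ\underline{w}_\Fb)(M'^\bullet)+(\mu\circ\underline{w}_\Fb)(M''^\bullet),\\
(\mu\circ\underline{w}_\Fb)(M'^\bullet)&\leq (\mu\circ\underline{w}_\Fb)(M^\bullet)+(\mu\circ\underline{w}_\Fb)(M''^\bullet),\\
(\mu\circ\underline{w}_\Fb)(M''^\bullet)&\leq (\mu\circ\underline{w}_\Fb)(M^\bullet)+(\mu\circ\underline{w}_\Fb)(M'^\bullet),
\end{align*}
which is the required exactness.

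I do not expect a serious obstacle; the only subtlety is measurability/integrability of $\underline{w}_\Fb(M^\bullet)$ as a function on $(P,\mu)$, but the hypothesis that each term lies in $\mathcal{T}^{\Tr}_{\mathscr{O},\mu,\underline{w}_\Fb}$ bakes this in by definition, so monotonicity of the integral applies directly. The heart of the argument is therefore the trivial but crucial observation that Lemma \ref{lem5.1} gives pointwise inequalities that are preserved under integration.
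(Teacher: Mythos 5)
Your proposal is correct and matches the paper's proof essentially verbatim: both verify the weight axioms for the zero object and isomorphisms, invoke Lemma \ref{lem5.1} pointwise for each $p\in P$, and integrate the resulting inequalities against $\mu$ using monotonicity of the integral. No gaps.
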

\begin{proof}
It is straightforward to prove that if $M^\bullet$ is the zero object in $\mathcal{T}^{\Tr}_{\mathscr{O},\mu,\underline{w}_\Fb}$, then $(\mu\circ \underline{w}_\Fb)(M^\bullet)=0$. Moreover, it is also straightforward to prove that if $M^\bullet$ and $M'^\bullet$ are isomorphic objects in $\mathcal{T}^{\Tr}_{\mathscr{O},\mu,\underline{w}_\Fb}$, then $(\mu\circ \underline{w}_\Fb)(M^\bullet)= (\mu\circ \underline{w}_\Fb)(M'^\bullet)$.
Let $M^\bullet\to M'^\bullet\to M''^\bullet\to M^\bullet[1]$ be an distinguished triangle in $\Tr$ whose terms are in $\mathcal{T}^{\Tr}_{\mathscr{O},\mu,\underline{w}_\Fb}$. Then by Lemma \ref{lem5.1}, we obtain for all $p\in P$ that $\underline{w}_\Fb(M^\bullet)(p)\leq \underline{w}_\Fb(M'^\bullet)(p) + \underline{w}_\Fb(M''^\bullet)(p)$. This implies 
\begin{equation*}
\int_{p\in P}\underline{w}_\Fb(M^\bullet)(p)d\mu(p)\leq \int_{p\in P}\underline{w}_\Fb(M'^\bullet)(p)d\mu(p)+\int_{p\in P}\underline{w}_\Fb(M''^\bullet)(p)d\mu(p), 
\end{equation*}
which implies $(\mu\circ \underline{w}_\Fb)(M^\bullet)\leq (\mu\circ \underline{w}_\Fb)(M'^\bullet)+(\mu\circ \underline{w}_\Fb)(M''^\bullet)$. The other cases are proved similarly.  
\end{proof}

Let $M^\bullet$ and $N^\bullet$ be objects in $\Tr$, and let $\gamma^\bullet$ be a $\mathcal{M}(\mathcal{T}^{\Tr}_{\mathscr{O},\mu,\underline{w}_\Fb})$-zigzag from $M^\bullet$ to $N^\bullet$. Assume that $\gamma^\bullet$ is of the form
\begin{equation}
M^\bullet\xrightarrow{\gamma_1^\bullet} M_1^\bullet\xleftarrow{\gamma_2^\bullet}M_2^\bullet\xrightarrow{\gamma_3^\bullet}\cdots\xleftarrow{\gamma_n^\bullet} N^\bullet
\end{equation}
\noindent
Define a map $\mathrm{cost}_{\underline{w}_\Fb}(\gamma^\bullet): P\to [0,\infty)$ as $\mathrm{cost}_{\underline{w}_\Fb}(\gamma^\bullet)(p) = \mathrm{cost}_{\underline{w}_{\Fun_p}}(\gamma^\bullet)$ for all $p\in P$.

\begin{lemma}\label{lemma2.4}
Let $M^\bullet$ and $N^\bullet$ be objects in $\Tr$, and let $\gamma^\bullet$ be a $\mathcal{M}(\mathcal{T}^{\Tr}_{\mathscr{O},\mu,\underline{w}_\Fb})$-zigzag from $M^\bullet$ to $N^\bullet$. Then $\mathrm{cost}_{\mu\circ \underline{w}_{\Fb}}(\gamma^\bullet) = \mu(\mathrm{cost}_{\underline{w}_{\Fb}}(\gamma^\bullet))$.
\end{lemma}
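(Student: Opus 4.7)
The plan is to unwind both sides of the claimed identity using the definitions and then appeal to linearity of the integral over a finite sum. First I would fix the zigzag
\begin{equation*}
\gamma^\bullet: M^\bullet\xrightarrow{\gamma_1^\bullet} M_1^\bullet\xleftarrow{\gamma_2^\bullet}M_2^\bullet\xrightarrow{\gamma_3^\bullet}\cdots\xleftarrow{\gamma_n^\bullet} N^\bullet,
\end{equation*}
and for each $1\leq i\leq n$ let $C_{\gamma_i^\bullet}$ denote the corresponding mapping cone, which lies in $\mathcal{T}^{\Tr}_{\mathscr{O},\mu,\underline{w}_\Fb}$ by the hypothesis that $\gamma^\bullet$ is a $\mathcal{M}(\mathcal{T}^{\Tr}_{\mathscr{O},\mu,\underline{w}_\Fb})$-zigzag.

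Next I would expand the left-hand side. By definition of the cost function together with Definition~\ref{cohmuint},
\begin{equation*}
\mathrm{cost}_{\mu\circ \underline{w}_{\Fb}}(\gamma^\bullet) = \sum_{i=1}^n (\mu\circ \underline{w}_{\Fb})(C_{\gamma_i^\bullet}) = \sum_{i=1}^n \int_{P}\underline{w}_{\Fb}(C_{\gamma_i^\bullet})\, d\mu.
\end{equation*}
On the other hand, for each $p\in P$, the definition of $\mathrm{cost}_{\underline{w}_{\Fb}}(\gamma^\bullet)$ and of $\underline{w}_\Fb(-)(p) = \underline{w}_{\Fun_p}(-)$ gives
\begin{equation*}
\mathrm{cost}_{\underline{w}_{\Fb}}(\gamma^\bullet)(p) = \mathrm{cost}_{\underline{w}_{\Fun_p}}(\gamma^\bullet) = \sum_{i=1}^n \underline{w}_{\Fun_p}(C_{\gamma_i^\bullet}) = \sum_{i=1}^n \underline{w}_{\Fb}(C_{\gamma_i^\bullet})(p).
\end{equation*}

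The only substantive step is then to interchange the finite sum with the integral, which is immediate from linearity of the Lebesgue integral (each integrand is $\mu$-integrable by membership of $C_{\gamma_i^\bullet}$ in $\mathcal{T}^{\Tr}_{\mathscr{O},\mu,\underline{w}_\Fb}$). Applying this yields
\begin{equation*}
\mu\bigl(\mathrm{cost}_{\underline{w}_{\Fb}}(\gamma^\bullet)\bigr) = \int_{P}\sum_{i=1}^n \underline{w}_{\Fb}(C_{\gamma_i^\bullet})(p)\, d\mu(p) = \sum_{i=1}^n \int_{P}\underline{w}_{\Fb}(C_{\gamma_i^\bullet})\, d\mu,
\end{equation*}
which matches the expression obtained for the left-hand side, establishing the claim.

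There is essentially no obstacle; the statement amounts to bookkeeping. The only point that deserves attention is ensuring that each integrand $\underline{w}_{\Fb}(C_{\gamma_i^\bullet})$ is $\mu$-integrable so that the interchange of a finite sum with the integral is justified, and this is built into the definition of the class $\mathcal{T}^{\Tr}_{\mathscr{O},\mu,\underline{w}_\Fb}$ together with the hypothesis that the $\gamma_i^\bullet$ lie in $\mathcal{M}(\mathcal{T}^{\Tr}_{\mathscr{O},\mu,\underline{w}_\Fb})$.
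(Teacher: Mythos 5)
Your proof is correct and follows essentially the same route as the paper's: expand both sides via the definitions of $\mathrm{cost}$, $\mu\circ\underline{w}_\Fb$, and $\underline{w}_\Fb(-)(p)=\underline{w}_{\Fun_p}(-)$, then interchange the finite sum with the integral by linearity. Your explicit remark on integrability of each $\underline{w}_\Fb(C_{\gamma_i^\bullet})$ is a small but welcome addition the paper leaves implicit.
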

\begin{proof}
Assume that $\gamma^\bullet$ is of the form $M^\bullet\xrightarrow{\gamma_1^\bullet} M_1^\bullet\xleftarrow{\gamma_2^\bullet}M_2^\bullet\xrightarrow{\gamma_3^\bullet}\cdots\xleftarrow{\gamma_n^\bullet} N^\bullet$. Then:
\begin{align*}
\mathrm{cost}_{\mu\circ \underline{w}_{\Fb}}(\gamma^\bullet)&=\sum_{j=1}^n(\mu\circ\underline{w}_{\Fb})(C_{\gamma_j^\bullet})\\
&= \sum_{j=1}^n\int_{p\in P} \underline{w}_\Fb(C_{\gamma_j^\bullet})(p)d\mu(p)\\
&= \int_{p\in  P} \sum_{j=1}^n\underline{w}_{\Fun_p}(C_{\gamma_j^\bullet})d\mu(p)\\
&= \int_{p\in P} \mathrm{cost}_{\underline{w}_{\Fun_p}}(\gamma^\bullet)d\mu(p)\\
&= \int_{p\in P} \mathrm{cost}_{\underline{w}_\Fb}(\gamma^\bullet)(p)d\mu(p)\\
&=\mu(\mathrm{cost}_{\underline{w}_\Fb}(\gamma^\bullet)). \qedhere
\end{align*}
\end{proof}

\begin{proposition}\label{prop3.7}
Let $M^\bullet$ and $N^\bullet$ be objects in $\mathcal{T}^{\Tr}_{\mathscr{O},\mu,\underline{w}_\Fb}$. Then
\begin{equation*}
d_{\mu\circ \underline{w}_\Fb}(M^\bullet, N^\bullet)\leq \int_{P}(\underline{w}_\Fb(M^\bullet)+\underline{w}_\Fb(N^\bullet))d\mu = \mu(\underline{w}_\Fb(M^\bullet)+\underline{w}_\Fb(N^\bullet)).
\end{equation*}
\end{proposition}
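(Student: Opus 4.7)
The plan is to recognize this as a direct application of Lemma \ref{lemma1.1}(iii) to the exact weight $\mu \circ \underline{w}_\Fb$ on the class $\mathcal{T}^{\Tr}_{\mathscr{O},\mu,\underline{w}_\Fb}$. Lemma \ref{lem4.3} already established that $\mu \circ \underline{w}_\Fb$ is an exact weight on this class, so all the hypotheses needed for Lemma \ref{lemma1.1} are in place.

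First I would invoke Lemma \ref{lem4.3} to conclude that $w' := \mu \circ \underline{w}_\Fb$ is an exact weight on $\mathcal{S} := \mathcal{T}^{\Tr}_{\mathscr{O},\mu,\underline{w}_\Fb}$. Next, since $M^\bullet$ and $N^\bullet$ are by hypothesis in $\mathcal{S}$, Lemma \ref{lemma1.1}(iii) applied to $w'$ on $\mathcal{S}$ yields immediately
\begin{equation*}
d_{w'}(M^\bullet, N^\bullet) \leq w'(M^\bullet) + w'(N^\bullet).
\end{equation*}

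Finally, unfolding the definition of $w' = \mu \circ \underline{w}_\Fb$ from Definition \ref{cohmuint} and using linearity of the integral gives
\begin{equation*}
w'(M^\bullet) + w'(N^\bullet) = \int_P \underline{w}_\Fb(M^\bullet)\, d\mu + \int_P \underline{w}_\Fb(N^\bullet)\, d\mu = \int_P \bigl(\underline{w}_\Fb(M^\bullet) + \underline{w}_\Fb(N^\bullet)\bigr)\, d\mu,
\end{equation*}
which is exactly the claimed inequality, with the right-hand equality being the notational identity $\int_P f\, d\mu = \mu(f)$ used throughout the section.

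There is essentially no obstacle: the proposition is a corollary of Lemma \ref{lemma1.1}(iii) and Lemma \ref{lem4.3}, and the only real content is the observation that the induced integrated weight inherits exactness, which was already handled in Lemma \ref{lem4.3}. One minor sanity check worth mentioning in the write-up is that both $M^\bullet$ and $N^\bullet$ lie in the domain of $\mu \circ \underline{w}_\Fb$, i.e., $\underline{w}_\Fb(M^\bullet)$ and $\underline{w}_\Fb(N^\bullet)$ are $\mu$-integrable, so the right-hand integral is finite (or else the inequality is vacuous), so there is no measurability or integrability gap to close.
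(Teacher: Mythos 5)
Your proof is correct and matches the paper's argument exactly: apply Lemma \ref{lemma1.1}(iii) to the weight $\mu\circ\underline{w}_\Fb$ and then unfold the definition using linearity of the integral. The only (harmless) extra step is invoking Lemma \ref{lem4.3}; note that Lemma \ref{lemma1.1}(iii) holds for any weight on a class containing the zero object, so exactness of $\mu\circ\underline{w}_\Fb$ is not actually needed here.
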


\begin{proof}
By Lemma \ref{lemma1.1} (iii), we obtain the following:
\begin{align*}
d_{\mu\circ \underline{w}_\Fb}(M^\bullet, N^\bullet)&\leq (\mu\circ \underline{w}_\Fb)(M^\bullet)+(\mu\circ \underline{w}_\Fb)(N^\bullet)\\
&=\int_{P}\underline{w}_\Fb(M^\bullet)d\mu+\int_{P}\underline{w}_\Fb(N^\bullet)d\mu\\
&=\int_{P}\underline{w}_\Fb(M^\bullet)+\underline{w}_\Fb(N^\bullet)d\mu\\
&=\mu(\underline{w}_\Fb(M^\bullet)+\underline{w}_\Fb(N^\bullet)).\qedhere
\end{align*}
\end{proof}

%We denote by $\mathcal{H}(\mathscr{O})$ the class of all objects $A^\bullet$ in $\mathcal{D}^b(\Ab)$ such that for all $i\in\Z$, $\H^i(A^\bullet)\in \mathscr{O}$.

%The following results follows from the proof of Theorem \ref{thm1.7} (i) $\Rightarrow$ (i).

%\begin{corollary}\label{cor3.11}
%If $w$ is an exact weight on $\Sub$, then $w$ is also stable.
%\end{corollary}

\begin{proposition}\label{prop3.8}
Assume that $w$ is an additive weight on $\Obj$. For all objects $M^\bullet$ and $N^\bullet$ in $\mathcal{T}^{\Tr}_{\mathscr{O},\mu,\underline{w}_\Fb}$, we have 
\begin{equation*}
\mu\left(\left |\underline{w}_\Fb(M^\bullet)-\underline{w}_\Fb(N^\bullet)\right |\right)=\int_{P}\left |\underline{w}_\Fb(M^\bullet)-\underline{w}_\Fb(N^\bullet)\right |d\mu\leq d_{\mu\circ \underline{w}_\Fb}(M^\bullet, N^\bullet).
\end{equation*}
\end{proposition}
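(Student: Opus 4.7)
The plan is to fix an arbitrary $\mathcal{M}(\mathcal{T}^{\Tr}_{\mathscr{O},\mu,\underline{w}_\Fb})$-zigzag $\gamma^\bullet$ from $M^\bullet$ to $N^\bullet$, establish the desired inequality pointwise in $p\in P$, integrate, and then take infimum over zigzags. By Lemma~\ref{lemma2.4} this last step yields exactly $d_{\mu\circ \underline{w}_\Fb}(M^\bullet,N^\bullet)$ on the right-hand side.

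For the pointwise step, I would fix $p\in P$ and exploit the fact that, by Proposition~\ref{prop1.7} applied to the cohomological functor $\Fun_p$ and the additive weight $w$ on $\Obj$, the induced weight $\underline{w}_{\Fun_p}$ is exact on $\mathcal{L}_p$. Since each term of $\gamma^\bullet$ lies in $\mathcal{T}^{\Tr}_{\mathscr{O},\mu,\underline{w}_\Fb}\subseteq \mathcal{L}_p$ and each mapping cone $C_{\gamma_i^\bullet}$ is in $\mathcal{L}_p$ as well, the exactness of $\underline{w}_{\Fun_p}$ applied to each exact triangle $M_{i-1}^\bullet\to M_i^\bullet\to C_{\gamma_i^\bullet}\to \Sigma M_{i-1}^\bullet$ (or the rotated version, for backward arrows) yields
\begin{equation*}
\bigl|\underline{w}_{\Fun_p}(M_i^\bullet)-\underline{w}_{\Fun_p}(M_{i-1}^\bullet)\bigr| \leq \underline{w}_{\Fun_p}(C_{\gamma_i^\bullet}).
\end{equation*}
Telescoping through the zigzag via the ordinary triangle inequality on $\R$ then gives
\begin{equation*}
\bigl|\underline{w}_\Fb(M^\bullet)(p)-\underline{w}_\Fb(N^\bullet)(p)\bigr| \leq \sum_{i=1}^n \underline{w}_{\Fun_p}(C_{\gamma_i^\bullet}) = \mathrm{cost}_{\underline{w}_\Fb}(\gamma^\bullet)(p).
\end{equation*}

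Integrating this pointwise inequality against $\mu$ and invoking Lemma~\ref{lemma2.4} produces
\begin{equation*}
\int_P \bigl|\underline{w}_\Fb(M^\bullet)-\underline{w}_\Fb(N^\bullet)\bigr|\,d\mu \leq \int_P \mathrm{cost}_{\underline{w}_\Fb}(\gamma^\bullet)\,d\mu = \mu\bigl(\mathrm{cost}_{\underline{w}_\Fb}(\gamma^\bullet)\bigr) = \mathrm{cost}_{\mu\circ \underline{w}_\Fb}(\gamma^\bullet),
\end{equation*}
and taking the infimum over all such zigzags $\gamma^\bullet$ concludes the argument.

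The main obstacle I anticipate is a bookkeeping one rather than a conceptual one: namely, verifying that the pointwise inequality above is in fact a direct consequence of exactness of $\underline{w}_{\Fun_p}$ rather than requiring the full strength of Theorem~\ref{thm1.5}. I would prefer to avoid citing Theorem~\ref{thm1.5} because the "lower bounds its path metric" formulation in part~(iii) there refers to $d_{\underline{w}_{\Fun_p}}$, whereas here I need the sharper statement that the pointwise difference is bounded by the cost of \emph{any} specific zigzag. Fortunately this follows immediately from exactness by the telescoping argument above, and no appeal to Theorem~\ref{thm1.5} is needed. A secondary point to confirm is measurability of $|\underline{w}_\Fb(M^\bullet)-\underline{w}_\Fb(N^\bullet)|$ as a function on $P$, which is automatic since both functions are $\mu$-integrable by the definition of $\mathcal{T}^{\Tr}_{\mathscr{O},\mu,\underline{w}_\Fb}$.
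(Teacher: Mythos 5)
Your proof is correct and follows the same skeleton as the paper's: fix a zigzag, prove a pointwise bound at each $p\in P$, integrate, invoke Lemma~\ref{lemma2.4}, and take the infimum. The one place you diverge is the pointwise step. The paper obtains
$\mathrm{cost}_{\underline{w}_{\Fun_p}}(\gamma^\bullet)\geq d_{\underline{w}_{\Fun_p}}(M^\bullet,N^\bullet)\geq \bigl|\underline{w}_{\Fun_p}(M^\bullet)-\underline{w}_{\Fun_p}(N^\bullet)\bigr|$
by passing through the path metric $d_{\underline{w}_{\Fun_p}}$ and citing Theorem~\ref{thm1.5} (exactness implies the weight lower bounds its path metric), whereas you inline a direct telescoping argument from the inequality $\bigl|\underline{w}_{\Fun_p}(X)-\underline{w}_{\Fun_p}(Y)\bigr|\leq \underline{w}_{\Fun_p}(Z)$ for each triangle in the zigzag. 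Your stated reason for avoiding Theorem~\ref{thm1.5} is not actually a problem for the paper's route: the cost of any specific zigzag dominates the infimum defining $d_{\underline{w}_{\Fun_p}}$, so the chain of inequalities closes. Still, your version is more self-contained and sidesteps having to verify the two-out-of-three hypothesis of Theorem~\ref{thm1.5} for the class $\mathcal{L}_p$. One caveat applies equally to both arguments: a $\mathscr{M}(\mathcal{S})$-zigzag only requires the \emph{cones} to lie in $\mathcal{S}$, so your assertion that each intermediate term $M_i^\bullet$ lies in $\mathcal{L}_p$ is not automatic from the definition; it follows from the long exact sequence of $\Fun_p$ once the endpoints and cones are in $\mathcal{L}_p$, and the paper glosses over the same point.
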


\begin{proof}
Let $\gamma^\bullet$ be a $\mathcal{M}(\mathcal{T}^{\mathcal{K}}_{\mathscr{O},\mu,\underline{w}_\Fun})$-zigzag from $M^\bullet$ to $N^\bullet$, and assume that $\gamma^\bullet$ is of the form $M^\bullet\xrightarrow{\gamma_1^\bullet}M_1^\bullet\xleftarrow{\gamma_2^\bullet}M_2^\bullet\xrightarrow{\gamma_3^\bullet}\cdots\xleftarrow{\gamma_n^\bullet}N^\bullet$. Let $p\in P$ be fixed. Since $w$ is additive, it follows by Proposition \ref{prop1.7} that $\underline{w}_{\Fun_p}$ is exact, which together with Theorem \ref{thm1.5} gives:
\begin{align*}
\mathrm{cost}_{\underline{w}_{\Fb}}(\gamma^\bullet)(p) &= \mathrm{cost}_{\underline{w}_{\Fun_p}}(\gamma^\bullet)\\
&\geq d_{\underline{w}_{\Fun_p}}(M^\bullet, N^\bullet)\\
&\geq \left | \underline{w}_{\Fun_p}(M^\bullet) - \underline{w}_{\Fun_p}(N^\bullet)\right |\\
&=\left | \underline{w}_\Fb(M^\bullet)(p) - \underline{w}_\Fb(N^\bullet)(p)\right |. 
\end{align*}
\noindent
This together with Lemma \ref{lemma2.4} implies that 
\begin{align*}
\mathrm{cost}_{\mu\circ\underline{w}_\Fb}(\gamma^\bullet)\geq \mu\left(\left | \underline{w}_\Fb(M^\bullet) - \underline{w}_\Fb(N^\bullet)\right |\right)=\int_{p\in P}\left | \underline{w}_\Fb(M^\bullet)(p) - \underline{w}_\Fb(N^\bullet)(p)\right |d\mu(p), 
\end{align*}
\noindent
and thus, $d_{\mu\circ \underline{w}_\Fb}(M^\bullet, N^\bullet)\geq \mu\left(\left | \underline{w}_\Fb(M^\bullet) - \underline{w}_\Fb(N^\bullet)\right |\right)$. 
%This finishes the proof of Theorem \ref{thm4.6}.
\end{proof}

Combining Proposition \ref{prop3.7} with Proposition \ref{prop3.8}, we obtain the following result which is an analog of \cite[Thm. 1.7]{bubenik1}. 
\begin{theorem}\label{thm4.6}
Assume that $w$ is an additive weight on $\Obj$. For all objects $M^\bullet$ and $N^\bullet$ in $\mathcal{T}^{\Tr}_{\mathscr{O},\mu,\underline{w}_\Fb}$, we have 
\begin{equation*}
\int_{P}\left |\underline{w}_\Fb(M^\bullet)-\underline{w}_\Fb(N^\bullet)\right |d\mu\leq d_{\mu\circ \underline{w}_\Fb}(M^\bullet, N^\bullet) \leq \int_{P}(\underline{w}_\Fb(M^\bullet)+\underline{w}_\Fb(N^\bullet))d\mu .
\end{equation*}
\end{theorem}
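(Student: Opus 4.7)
The statement is essentially a packaging result: it concatenates the upper bound from Proposition \ref{prop3.7} with the lower bound from Proposition \ref{prop3.8}, so the proof should be extremely short. My plan is to simply invoke both results in sequence under the standing hypothesis that $w$ is additive, and observe that the hypotheses of each proposition are exactly the ones assumed in the theorem.

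First, I would note that since $M^\bullet$ and $N^\bullet$ belong to $\mathcal{T}^{\Tr}_{\mathscr{O},\mu,\underline{w}_\Fb}$, Proposition \ref{prop3.7} immediately yields the upper bound
\begin{equation*}
d_{\mu\circ \underline{w}_\Fb}(M^\bullet, N^\bullet) \leq \int_{P}\bigl(\underline{w}_\Fb(M^\bullet)+\underline{w}_\Fb(N^\bullet)\bigr)\,d\mu.
\end{equation*}
Observe that this upper bound uses only Lemma \ref{lemma1.1}(iii) applied to the exact weight $\mu\circ \underline{w}_\Fb$, whose exactness on $\mathcal{T}^{\Tr}_{\mathscr{O},\mu,\underline{w}_\Fb}$ was established in Lemma \ref{lem4.3}; the additivity of $w$ enters only to guarantee that $\mu\circ \underline{w}_\Fb$ really is exact.

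Next, since $w$ is additive on $\Obj$, the hypotheses of Proposition \ref{prop3.8} hold, and it delivers the lower bound
\begin{equation*}
\int_{P}\bigl|\underline{w}_\Fb(M^\bullet)-\underline{w}_\Fb(N^\bullet)\bigr|\,d\mu \leq d_{\mu\circ \underline{w}_\Fb}(M^\bullet, N^\bullet).
\end{equation*}
Chaining the two displayed inequalities gives exactly the claimed double inequality, and there is no additional content to verify. There is no real obstacle here: both propositions have already been proved, and the theorem is nothing more than their conjunction, so the proof reduces to a one-line citation of each.
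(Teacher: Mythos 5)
Your proposal is correct and matches the paper exactly: the theorem is stated there as an immediate combination of Proposition \ref{prop3.7} (upper bound) and Proposition \ref{prop3.8} (lower bound), with no further argument. Your additional remark about where the additivity of $w$ is actually used is accurate and consistent with the roles those propositions play in the paper.
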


\subsection{Homotopy categories of persistence modules}
Let $\mathcal{K}(\Ab^{\Po})$ be the homotopy category of $\Ab^{\Po}$. For all $p\in P$, let $(-)_p: \Ab^\Po\to \Ab$ be defined as $(-)_p(M) = M(p)$ for all objects $M$, and $(-)_p(f)= f(p)$ for all morphisms $f$ in $\Ab^\Po$. Then  $(-)_p$ defines a covariant functor from $\Ab^\Po$ to $\Ab$. Let $M^\bullet$ be a fixed object in $\mathcal{C}(\Ab^{\Po})$, and assume that $M^\bullet$ is of the form

\begin{equation*}
M^\bullet: \cdots\to M^{i-1}\xrightarrow{d_M^{i-1}}M^i\xrightarrow{d_M^i}M^{i+1}\to \cdots.
\end{equation*}

Let $p\in P$ be fixed but arbitrary. We let $M^\bullet(p)$ be the complex in $\mathcal{C}(\Ab)$ 
\begin{equation*}
M^\bullet(p): \cdots\to M^{i-1}(p)\xrightarrow{d_M^{i-1}(p)}M^i(p)\xrightarrow{d_M^i(p)}M^{i+1}(p)\to \cdots.
\end{equation*}
\noindent
Observe that $M^\bullet [1](p)\cong M^\bullet(p)[1]$ in $\mathcal{C}(\Ab)$. If $f^\bullet: M^\bullet \to M'^\bullet$ is a morphism in $\mathcal{C}(\Ab^\Po)$, then we obtain a natural morphism $f^\bullet(p): M^\bullet(p)\to N^\bullet(p)$ defined as $(f^\bullet(p))^i= f^i(p)$ for all $i\in \Z$. Thus if  $f^\bullet, g^\bullet : M^\bullet\to N^\bullet$ are morphisms in $\mathcal{C}(\Ab^\Po)$ that are homotopic (as in \ref{sub6}), then $f^\bullet(p), g^\bullet(p): M^\bullet(p)\to N^\bullet(p)$ are morphisms in $\mathcal{C}(\Ab)$ that are also homotopic. Thus we obtain a well-defined functor $(-)_p: \mathcal{K}(\Ab^\Po)\to \mathcal{K}(\Ab)$.

Let $M^\bullet$ be a complex in $\mathcal{C}(\Ab^\Po)$ and let $p\in P$ be fixed but arbitrary. Then for all $i\in \Z$, we have an isomorphism in $\Ab$:
\begin{equation}\label{cohomp}
\H^i(M^\bullet)(p) \cong \H^i(M^\bullet (p)). 
\end{equation}

Indeed, let $i\in \Z$ be fixed. Then by definition, we have a short exact sequence in $\Ab^\Po$ given as follows:
\begin{equation*}
0\to \mathrm{im}\,\delta^{i-1}\to \ker\delta^i\to \H^i(M^\bullet)\to 0.
\end{equation*}
Thus we get a short exact sequence in $\Ab$:
\begin{equation*}
0\to( \mathrm{im}\,\delta^{i-1})(p)\to (\ker\delta^i)(p)\to \H^i(M^\bullet)(p)\to 0.
\end{equation*}
Since $(\mathrm{im}\,\delta^{i-1})(p)\cong \mathrm{im}(\delta(p))^{i-1}$ and $(\ker \delta^i) (p) \cong \ker(\delta^i(p))$ , it follows that there exists a natural morphism $h :\H^i(M^\bullet)(p)\to \H^i(M^\bullet(p))$ in $\Ab$, which is also an isomorphism by using the Snake Lemma for abelian categories. 

The following result is straightforward from the definition of mapping cones. 
\begin{lemma}
If $f^\bullet: M^\bullet \to N^\bullet$ is a morphism of complexes in $\mathcal{K}(\Ab)$ whose mapping cone is $C_{f^\bullet}$ (as in \ref{sub6}), then for all $p\in P$, the mapping cone of the induced morphism of complexes $f^\bullet(p): M^\bullet(p) \to N^\bullet(p)$ is isomorphic to $C_{f^\bullet}(p)$.
\end{lemma}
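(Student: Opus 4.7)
The plan is to verify the stated isomorphism degree-by-degree using the explicit formula for the mapping cone, together with the fact that the evaluation functor $(-)_p\colon \Ab^\Po \to \Ab$ is additive and hence commutes with finite direct sums.

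First, I would recall that the mapping cone $C_{f^\bullet}$ of a chain map $f^\bullet\colon M^\bullet \to N^\bullet$ in $\mathcal{C}(\Ab^\Po)$ has, in degree $i$, the object
\[
C_{f^\bullet}^i = M^{i+1}\oplus N^i,
\]
with differential
\[
d_{C_{f^\bullet}}^i = \begin{pmatrix} -d_M^{i+1} & 0 \\ f^{i+1} & d_N^i \end{pmatrix}.
\]
Since limits and colimits in the functor category $\Ab^\Po$ are computed pointwise, the evaluation functor $(-)_p$ is exact, and in particular, for any pair of objects $X, Y$ in $\Ab^\Po$, there is a canonical isomorphism $(X\oplus Y)(p) \cong X(p)\oplus Y(p)$, natural in $X$ and $Y$.

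Second, applying $(-)_p$ degree-by-degree to $C_{f^\bullet}$, I would obtain
\[
C_{f^\bullet}(p)^i = (M^{i+1}\oplus N^i)(p) \cong M^{i+1}(p)\oplus N^i(p) = C_{f^\bullet(p)}^i,
\]
and the block-matrix differential of $C_{f^\bullet}$ transports entry-by-entry to the block-matrix differential of $C_{f^\bullet(p)}$ because each block is a morphism in $\Ab^\Po$ whose evaluation at $p$ produces the corresponding morphism in $\Ab$. Naturality of the direct-sum isomorphism guarantees that these identifications in successive degrees intertwine the differentials. This yields an isomorphism of complexes $C_{f^\bullet}(p) \cong C_{f^\bullet(p)}$ in $\mathcal{C}(\Ab)$, which descends to an isomorphism in $\mathcal{K}(\Ab)$.

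There is no substantive obstacle: the statement is formal and relies only on the exactness of $(-)_p$ together with the naturality of the mapping-cone construction. The only points deserving minor care are keeping the sign conventions of the differential consistent and verifying that the canonical direct-sum identification is truly natural in both summands, so that it commutes with the block-matrix differential; both are routine.
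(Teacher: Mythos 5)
Your proof is correct and is exactly the argument the paper has in mind: the paper simply declares the lemma ``straightforward from the definition of mapping cones,'' and your degree-by-degree verification, using that evaluation at $p$ is exact and commutes with finite direct sums, fills in precisely that routine check. The only cosmetic difference is that you order the cone as $M^{i+1}\oplus N^i$ while the paper's convention in the appendix is $N^i\oplus M^{i+1}$ with the corresponding transposed block differential, which changes nothing.
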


By the axiom (Tr1) of triangulated categories and the discussion above, we have that if $M^\bullet\xrightarrow{f^\bullet} N^\bullet\to C_{f^\bullet}\to M^\bullet[1]$ is an distinguished triangle in $\mathcal{K}(\Ab^\Po)$, then for all $p\in P$ we obtain an distinguished triangle in $\mathcal{K}(\Ab)$ given by $M^\bullet(p)\xrightarrow{f^\bullet(p)} N^\bullet(p) \to C_{f^\bullet}(p) \to M^\bullet(p)[1]$. Let $\H^0: \mathcal{K}(\Ab)\to \Ab$ be the $0$-th cohomology functor, which is a homological functor. If $p\in P$, then the composition of functors $\H^0_p=\H^0\circ (-)_p:\mathcal{K}(\Ab^\Po)\to \Ab$ is also a homological functor.  Let $\mathcal{L}_p = \mathcal{L}_{\mathcal{K}(\Ab^\Po), \H^0_p, \Obj,w}$ and let $M^\bullet$ be an object in $\mathcal{K}(\Ab^\Po)$ that is in $\mathcal{L}_p$ for all $p\in P$, and let $\mathscr{H}^0 = \{ \H^0_p\}_{p\in P}$.  Thus for all $p\in P$, 

\begin{equation}
\underline{w}_{\mathscr{H}^0}(M^\bullet)(p) = \left |\sum_{i\in \Z}(-1)^iw(\H^i(M^\bullet (p))\right | .
\end{equation}

Assume that $\Ab = \k\textup{-Mod}$, $\mathscr{O} = \Ob(\k\textup{-mod})$ and let $w=\dim_\k$. Let $M^\bullet$ be an object in $\mathcal{K}(\Ab^\Po)$ such that $M^\bullet$ is in  $\mathcal{L}_p = \mathcal{L}_{\mathcal{K}(\Ab^\Po), \H^0_p, \Obj,\dim_\k}$ for all $p\in P$. Let $\mathbf{hdim}\, M^\bullet = \underline{w}_{\mathscr{H}^0}(M^\bullet)$.  Thus by using (\ref{cohomp}) we have that for all $p\in P$  the following:
\begin{equation*}
(\mathbf{hdim}\, M^\bullet) (p) = \left |\sum_{i\in \Z}(-1)^i\dim_\k\H^i(M^\bullet(p))\right | =  \left |\sum_{i\in \Z}(-1)^i\mathbf{dim}\,\H^i(M^\bullet)(p)\right | , 
\end{equation*}
where for all $i\in \Z$, $\mathbf{dim}\,\H^i(M^\bullet)$ is the dimension vector of $\H^i(M^\bullet)$, i.e., for all $p\in P$, $\mathbf{dim}\,\H^i(M^\bullet)(p) =\dim_\k\H^i(M(p))$. Since $\mathbf{dim}$ defines an additive weight on $\Ab^\Po$, it follows by Lemma \ref{lem4.3} that $\mu \circ \mathbf{hdim}$ defines an exact weight on $\mathcal{T}^{\mathcal{K}(\Ab^\Po)}_{\mathscr{O},\mu,\mathbf{hdim}}$. Thus by Proposition \ref{prop3.7} and Theorem \ref{thm4.6} for all objects $M^\bullet$ and $N^\bullet$ in $\mathcal{T}^{\mathcal{K}(\Ab^\Po)}_{\mathscr{O},\mu,\mathbf{hdim}}$, we have
\begin{equation*}
\int_{P}\left | \mathbf{hdim}\, M^\bullet - \mathbf{hdim}\, N^\bullet \right | d\mu \leq d_{\mu\circ \mathbf{hdim}}(M^\bullet, N^\bullet)\leq \int_{P}\left( \mathbf{hdim}\, M^\bullet +\mathbf{hdim}\, N^\bullet \right ) d\mu. 
\end{equation*}
In particular if $M^\bullet=\overline{M}$ and $N^\bullet=\overline{N}$ are complexes concentrated in degree zero, whose non-zero terms in $\Ab^\Po$ are $M$ and $N$, respectively, we get 

\begin{equation*}
\int_{P}\left | \mathbf{dim}\, \overline{M} - \mathbf{dim}\, \overline{N} \right | d\mu \leq d_{\mu\circ \mathbf{hdim}}(\overline{M}, \overline{N})\leq \int_{P}\left( \mathbf{dim}\, \overline{M} +\mathbf{dim}\, \overline{N} \right ) d\mu,
\end{equation*}
which is a version of \cite[Eqn. (4.8)]{bubenik1}.

\subsection{Derived categories of persistence modules}
As before, assume that $\Ab = \k\textup{-Mod}$, $\mathscr{O} = \Ob(\k\textup{-mod})$ and let $w=\dim_\k$. Let $p\in P$. We define a functor $S_p:\Po\to \Ab$ as follows:
\begin{align*}
S_p(p') = \begin{cases} \k, &\text{ if $p'=p$}\\ 0, &\text{ otherwise}\end{cases},
\end{align*} 
with all trivial internal morphisms. We call $S_p$ the {\it simple persistence module} corresponding to $p\in P$. We denote by $\overline{S_p}$ the corresponding object in $\mathcal{D}(\Ab^\Po)$ concentrated in degree zero. Consider the cohomological functor $\Hom_{\mathcal{D}(\Ab^\Po)}(-,\overline{S_p}): \mathcal{D}(\Ab^\Po)^\textup{op}\to \k\textup{-Mod}$ and let $\mathcal{L}_p = \mathcal{L}_{\mathcal{D}(\Ab^\Po), \Hom_{\mathcal{D}(\Ab^\Po)}(-,\overline{S_p}), \Obj,\dim_\k}$. Assume that $M^\bullet$ is in $\mathcal{L}_p$ for all $p\in P$. We define $|\chi|(M^\bullet): P\to [0,\infty)$ as
\begin{equation*}
|\chi|(M^\bullet)(p)=\left |\sum_{i\in \Z}(-1)^i\dim_\k\Hom_{\mathcal{D}(\Ab^\Po)}(M^\bullet[-i], \overline{S_p})\right |.
\end{equation*}
\noindent
Thus by Proposition \ref{prop3.7} and Theorem \ref{thm4.6}, for all complexes $M^\bullet$ and $N^\bullet$ in $\mathcal{T}^{\mathcal{D}(\Ab^\Po)}_{\mathscr{O},\mu,|\chi|}$, 
\begin{equation*}
\int_{P}\left | |\chi|(M^\bullet) - |\chi|(N^\bullet) \right | d\mu \leq d_{\mu\circ |\chi|}(M^\bullet, N^\bullet)\leq \int_{P}\left( |\chi|(M^\bullet) +|\chi|(N^\bullet) \right) d\mu.
\end{equation*}

\section{Application to continuous quivers of type $\mathbb{A}$}

In this section we apply the previous results to the abelian category of point-wise finite dimensional representations of the real line introduced by K. Igusa et al. in \cite{igusa-rock-todorov1} that generalizes the representations of quivers of type $\mathbb{A}_n$, and prove our fourth main result (Theorem \ref{thm4.3}). 

Following e.g. \cite[\S 1]{igusa-rock-todorov1}, a {\it continuous quiver of type $\mathbb{A}$} is a triple $\mathbb{A}_{\R,S} = (\R, S, \preceq)$ that satisfies the following conditions:
\begin{enumerate}
\item 
\begin{enumerate}
\item $S$ is a discrete subset of $\R$, possibly empty with no accumulation points; 
\item the order on $S\cup \{\pm \infty\}$ is induced by the usual order of $\R$ and $-\infty < s < \infty$ for all $s\in S$;
\item the elements of $S\cup \{\pm \infty\}$ are indexed by a subset of $\Z\cup \{\pm \infty\}$ such that $s_n$ denotes the element of $S\cup \{\pm \infty\}$ with index $n$. The indexing must adhere to the following conditions:
\begin{enumerate}
\item there exists $s_0\in S\cup \{\pm \infty\}$;
\item if $m\leq n$ in $\Z\cup \{\pm \infty\}$ and $s_m, s_n$ are the corresponding elements in  $S\cup \{\pm \infty\}$, then for all $p \in \Z\cup \{\pm \infty\}$ such that $m\leq p\leq n$, there is $s_p\in S\cup \{\pm \infty\}$.
\end{enumerate}  
\end{enumerate}
\item We have a new partial order $\preceq$ on $\R$, which is called the {\it orientation} of $\mathbb{A}_{\R, S}$, and which is given as follows:
\begin{enumerate}
\item if $n$ is even and $x, y \in [s_n,s_{n+1}]$, then $x\preceq y$ if and only if $x\leq y$ and $s_n,s_{n+1}\in S\cup \{\pm \infty\}$;
\item if $n$ is odd and $x, y \in [s_n,s_{n+1}]$, then $x\preceq y$ if and only if $x\geq y$ and $s_n,s_{n+1}\in S\cup \{\pm \infty\}$;
\end{enumerate} 
\end{enumerate}
We refer the reader to \cite[Exam. 1.1.4]{igusa-rock-todorov1} to obtain examples of $\mathbb{A}_{\R,S}$ for different cases for $S$. In particular, by letting $S=\emptyset$, we have two possible cases for $\preceq$. If $s_0=-\infty$ and $s_1=+\infty$, we obtain that $\preceq$ coincides with the usual order of $\R$. In this situation, we let $\mathbb{A}_{\R,\emptyset} = \mathbb{A}_{\R}$. If $s_{-1}=-\infty$ and $s_0 = +\infty$, we obtain that $\preceq$ is the opposite order $\leq^\text{op}$ on $\R$. In this situation, we let $\mathbb{A}_{\R,\emptyset} = \mathbb{A}_{\R}^\text{op}$

Let $\mathbb{A}_{\R,S}$ be a fixed continuous quiver of type $\mathbb{A}$. A {\it $\k$-representation} of $\mathbb{A}_{\R,S}$ is a functor $M: \R \to \k\textup{-Mod}$ such that for all $x,y,z\in \R$ we have:
\begin{enumerate}
\item if $y\preceq x$ in $\mathbb{A}_{\R,S}$, then there exists a $\k$-linear map $M(x,y): M(x)\to M(y)$;
\item if $z\preceq y \preceq x$ in $\mathbb{A}_{\R,S}$, then $M(x,z) = M(y,z) \circ M(x,y)$.  
\end{enumerate}  
In this situation, we say that $M$ is {\it pointwise finite-dimensional (pwf) } provided that $\dim_\k M(x)<\infty$ for all $x\in \R$.  For all subsets $P$ of $\R$, we say that $V$ is {\it indexed by $P$} if for all $x\not\in P$, $V_x=0$. We denote by $\mathrm{Rep}^{\textup{pwf}}(\mathbb{A}_{\R,S})$ the category of pwf representations of $\mathbb{A}_{\R,S}$. For all intervals  $I$ in $\R$, we denote by $M_I$ the {\it interval indecomposable representation} of $\mathbb{A}_{\R, S}$, which is given as follows:
\begin{align*}
M_I(x) = \begin{cases} \k, &\text{ $x\in I$}, \\ 0. &\text{ otherwise,}\end{cases} && M_I(x,y) = \begin{cases} \mathrm{id}_\k, &\text{ $y\preceq x$ in $I$,}\\ 0, &\text{ otherwise.}\end{cases}
\end{align*}

It follows by \cite[Thm. 2.3.2 \& Thm. 2.4.13]{igusa-rock-todorov1} that for all intervals $I\subset \R$, $M_I$ is an indecomposable representation of $\mathbb{A}_{R,S}$ and that every indecomposable pwf representation of $\mathbb{A}_{R,S}$ is isomorphic to $M_I$ for some interval $I$ in $\R$. Moreover, every pwf representation of $\mathbb{A}_{\R, S}$ is isomorphic to a direct sum of interval indecomposable representations.  
It also follows from \cite[Remark 2.4.16 \& Lemma 3.1.4]{igusa-rock-todorov1} that $\mathrm{Rep}^{\textup{pwf}}(\mathbb{A}_{\R,S})$ is a hereditary category (as in \ref{hereditarycat}) that is closed under kernels and cokernels and in which any morphism has a coinciding image and coimage. Let $\mathcal{D}(\mathrm{Rep}^{\textup{pwf}}(\mathbb{A}_{\R,S}))$ be the derived category of $\mathrm{Rep}^{\textup{pwf}}(\mathbb{A}_{\R,S})$ and let $M^\bullet$ be an object in $\mathcal{D}(\mathrm{Rep}^{\textup{pwf}}(\mathbb{A}_{\R,S}))$. It follows by \ref{hereditarycat} that there is an isomorphism in $\mathcal{D}(\mathrm{Rep}^{\textup{pwf}}(\mathbb{A}_{\R,S}))$:

\begin{equation*}
M^\bullet \cong \bigoplus_{i\in \Z}\H^i(M^\bullet)[-i].
\end{equation*}
\noindent
Therefore, for all $x\in \R$, we have an isomorphism in $\mathcal{D}^b(\k\textup{-mod})$:

\begin{equation*}
M^\bullet(x) \cong \bigoplus_{i\in \Z}\H^i(M^\bullet)(x)[-i].
\end{equation*}
\noindent
Since for each $i\in \Z$, $\H^i(M^\bullet)$ is in $\mathrm{Rep}^{\textup{pwf}}(\mathbb{A}_{\R,S})$, it follows by the discussion above  that 

\begin{align*}
\H^i(M^\bullet)\cong \bigoplus_{J_i\in \mathcal{I}_i}M_{J_i},
\end{align*}
where $\mathcal{I}_i$ is a collection of intervals in $\R$, and for all $J_i\in \mathcal{I}_i$, $M_{J_i}$ is the corresponding interval representation. Thus
\begin{equation}\label{directco}
M^\bullet \cong \bigoplus_{i\in \Z, J_i\in \mathcal{I}_i}M_{J_i}[-i].
\end{equation}

Let $x\in \mathbb{R}$ be fixed and let $S_x$ be the corresponding simple representation in  $\mathrm{Rep}^{\textup{pwf}}(\mathbb{A}_{\R,S})$ i.e. 
\begin{align*}
S_x(y) = \begin{cases} \k, &\text{ if $y=x$}\\ 0, &\text{ otherwise}\end{cases},
\end{align*} 
with all the internal morphisms zero. As before, we denote by $\overline{S_x}$ the corresponding object in $\mathcal{D}(\mathrm{Rep}^{\textup{pwf}}(\mathbb{A}_{\R,S}))$ concentrated in degree zero. Let $\mathscr{X} = \{\Hom_{\mathcal{D}(\mathrm{Rep}^{\textup{pwf}}(\mathbb{A}_{\R,S}))}(-,\overline{S_x})\}_{x\in \mathbb{R}}$.

Let $N^\bullet \cong \bigoplus_{i\in \Z, J_i\in \mathcal{I}_i}N_{J_i}[-i]$ be another object in $\mathcal{D}(\mathrm{Rep}^{\textup{pwf}}(\mathbb{A}_{\R,S}))$. If $\mu$ is the Lebesgue measure on $\mathbb{R}$, then by using Proposition \ref{prop3.10} together with  (\ref{rem2.1}) and (\ref{directco}) we have for all $1\leq p\leq \infty$ that 

\begin{align*}
W_p(d_{\mu\circ \mathscr{X}})(M^\bullet, N^\bullet) &\leq \inf\|\{W_p(d_{\mu\circ \mathscr{X}})(M_{J_i}[-i], N_{J_i}[-i])\}_{i\in \Z, J_i\in \mathcal{I}_i}\|_p\\
&=\inf\|\{W_p(d_{\mu\circ \mathscr{X}})(\overline{M}_{J_i}, \overline{N}_{J_i})\}_{i\in \Z, J_i\in \mathcal{I}_i}\|_p.
\end{align*}

Following \cite[Def. 3.1.3]{igusa-rock-todorov1}, we denote by $\mathrm{rep}_\k(\mathbb{A}_{\R,S})$ the full subcategory of $\mathrm{Rep}^{\textup{pwf}}(\mathbb{A}_{\R,S})$, whose objects are representations $M$ that are finitely generated by indecomposable projective representations, which are listed in \cite[Rem. 2.4.16]{igusa-rock-todorov1}.  More precisely, $M$ is an object in $\mathrm{rep}_\k(\mathbb{A}_{\R,S})$ if there exist indecomposable projective objects $P_1,\ldots,P_k$ in $\mathrm{Rep}^{\textup{pwf}}(\mathbb{A}_{\R,S})$ and an epimorphism $\bigoplus_{i=1}^kP_i\to M$. As noted in \cite[Def. 1.18]{igusa-rock-todorov2}, the indecomposable projective objects in $\mathrm{Rep}^{\textup{pwf}}(\mathbb{A}_{\R,S})$ and $\mathrm{rep}_\k(\mathbb{A}_{\R,S})$ coincide. It follows by \cite[Thm. 3.0.1]{igusa-rock-todorov1}, $\mathrm{rep}_\k(\mathbb{A}_{\R,S})$ is a Krull-Schmidt category such that if $M_I$ and $M_J$ are interval modules in $\mathrm{rep}_\k(\mathbb{A}_{\R,S})$, then for $i=0,1$, $\dim_\k\Ext^i(M_I,M_J)\leq 1$, and $\Ext^i(M_I,M_J)=0$ for all $i\geq 2$. 
Following the notation in \cite{igusa-rock-todorov2}, we let $\mathcal{D}^b(\mathbb{A}_{\R,S}) = \mathcal{D}^b(\mathrm{rep}_\k(\mathbb{A}_{\R,S}))$ be the bounded derived category of $\mathrm{rep}_\k(\mathbb{A}_{\R,S})$. In particular, for all $M^\bullet$ and  $M'^\bullet$ in  $\mathcal{D}^b(\mathbb{A}_{\R,S})$, $\dim_\k\Hom_{\mathcal{D}^b(\mathbb{A}_{\R,S})}(M^\bullet, M'^\bullet)<\infty$.

Let $\mathbb{A}_{\R,S'}$ be another continuous quiver of type $\mathbb{A}$. It follows from \cite[Thm. 2.3.5]{igusa-rock-todorov2} that the bounded derived categories $\mathcal{D}^b(\mathbb{A}_{\R,S})$ and $\mathcal{D}^b(\mathbb{A}_{\R,S'})$ are equivalent as triangulated categories (see \ref{triangleeq}) if and only if one of the following conditions holds:
\begin{enumerate}
\item $S$ and $S'$ are both finite;
\item each of $S$ and $S'$ is bounded on exactly one side;
\item $S$ and $S'$ are unbounded on both sides.
\end{enumerate}  

The following result is a direct consequence of the above discussion together with Corollary \ref{cor2.12}, and as mentioned in \S \ref{into}, it provides an alternative approach to study derived categories of zigzag persistence modules as discussed in \cite{hiraoka}.

\begin{theorem}\label{thm4.3}
Assume that $\mathbb{A}_{\R,S}$ and $\mathbb{A}_{\R,S'}$ are continuous quivers if type $\mathbb{A}$ such that there exists a triangle-equivalence $\mathscr{E}: \mathcal{D}^b(\mathbb{A}_{\R,S}) \to \mathcal{D}^b(\mathbb{A}_{\R,S'})$. 

\begin{enumerate}
\item Let $w = \dim_\k$, and let $L^\bullet$ be a fixed object in $\mathcal{D}^b(\mathbb{A}_{\R,S})$. Then for all $M^\bullet$ and $N^\bullet$ in $\mathcal{D}^b(\mathbb{A}_{\R,S})$,
\begin{equation*}
d_{\underline{w}_{\Hom_{\mathcal{D}^b(\mathbb{A}_{\R,S})}(-,L^\bullet)}}(M^\bullet,N^\bullet)= d_{\underline{w}_{\Hom_{\mathcal{D}^b(\mathbb{A}_{\R,S'})}(-,\mathscr{E}L^\bullet)}}(\mathscr{E}M^\bullet,\mathscr{E}N^\bullet).
\end{equation*}

\item Let $\mathscr{X} = \{\Hom_{\mathcal{D}^b(\mathbb{A}_{\R,S})}(-,\overline{S_x})\}_{x\in \mathbb{R}}$ and $\mathscr{E}\mathscr{X} = \{\Hom_{\mathcal{D}^b(\mathbb{A}_{\R,S'})}(-,\mathscr{E}\overline{S_x})\}_{x\in \mathbb{R}}$, where $S_x$ is the simple representation corresponding to $x\in \R$ in  $\mathrm{rep}_\k(\mathbb{A}_{\R,S})$. If $\mu$ is the Lebesgue measure on $\R$, then for all objects $M^\bullet$ and $N^\bullet$ in $\mathcal{D}^b(\mathbb{A}_{\R,S})$ and for all $1\leq p\leq \infty$,
\begin{equation*}
W_p(d_{\mu \circ \mathscr{X}})(M^\bullet, N^\bullet) = W_p(d_{\mu \circ \mathscr{E}\mathscr{X}})(\mathscr{E}M^\bullet,\mathscr{E}N^\bullet).
\end{equation*}

 \end{enumerate}

\end{theorem}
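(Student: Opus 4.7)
The plan is to derive part (i) as an immediate application of Corollary \ref{cor2.12}, and then leverage this (pointwise in $x\in\R$) together with the Wasserstein formula of Proposition \ref{prop3.10} to obtain part (ii). For part (i), I would take $\mathscr{O}$ to be the class of finite-dimensional $\k$-vector spaces (regarded as abelian groups), which is closed under kernels and cokernels, and $w = \dim_\k$, which is additive on short exact sequences. Setting $\Tr' = \mathcal{D}^b(\mathbb{A}_{\R,S})$, $\Tr = \mathcal{D}^b(\mathbb{A}_{\R,S'})$, and using $L^\bullet$ in the role of the distinguished object ``$S'$'' of Corollary \ref{cor2.12}, finiteness of $\underline{w}_{\Hom(-,L^\bullet)}$ is automatic because $\dim_\k \Hom_{\mathcal{D}^b(\mathbb{A}_{\R,S})}(M^\bullet, L^\bullet) < \infty$ for every $M^\bullet$, as noted just before the theorem. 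Corollary \ref{cor2.12} then gives the claimed isometry verbatim.

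For part (ii), I would first upgrade part (i) to an equality of the integrated path metrics $d_{\mu\circ\underline{w}_{\mathscr{X}}}$ and $d_{\mu\circ\underline{w}_{\mathscr{E}\mathscr{X}}}$. For each fixed $x \in \R$ the triangle-equivalence $\mathscr{E}$ supplies a natural isomorphism
\begin{equation*}
\Hom_{\mathcal{D}^b(\mathbb{A}_{\R,S})}(-, \overline{S_x}) \cong \Hom_{\mathcal{D}^b(\mathbb{A}_{\R,S'})}(\mathscr{E}(-), \mathscr{E}\overline{S_x}),
\end{equation*}
so the discussion preceding Theorem \ref{thm2.12} yields the pointwise identity $\underline{w}_{\Hom(-,\overline{S_x})}(M^\bullet) = \underline{w}_{\Hom(-,\mathscr{E}\overline{S_x})}(\mathscr{E}M^\bullet)$. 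Integrating over $\R$ against Lebesgue measure gives $(\mu\circ\underline{w}_{\mathscr{X}})(M^\bullet) = (\mu\circ\underline{w}_{\mathscr{E}\mathscr{X}})(\mathscr{E}M^\bullet)$, and in particular $\mathscr{E}$ sends $\mathcal{T}^{\mathcal{D}^b(\mathbb{A}_{\R,S})}_{\mathscr{O},\mu,\underline{w}_{\mathscr{X}}}$ bijectively onto $\mathcal{T}^{\mathcal{D}^b(\mathbb{A}_{\R,S'})}_{\mathscr{O},\mu,\underline{w}_{\mathscr{E}\mathscr{X}}}$. Because $\mathscr{E}$ takes exact triangles to exact triangles and commutes with the suspensions, it induces a cost-preserving bijection between the relevant $\mathscr{M}$-zigzags from $M^\bullet$ to $N^\bullet$ and from $\mathscr{E}M^\bullet$ to $\mathscr{E}N^\bullet$; taking infima yields the desired path-metric equality.

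To deduce the Wasserstein assertion, I would recall from (\ref{directco}) that every object $M^\bullet$ in $\mathcal{D}^b(\mathbb{A}_{\R,S})$ decomposes as a direct sum of indecomposables of the form $M_J[-i]$, each with endomorphism ring $\k$ (hence local); in particular $M^\bullet$ and $N^\bullet$ lie in the class $\Tr_\ell$. The additive equivalence $\mathscr{E}$ preserves direct-sum decompositions and the property of having a local endomorphism ring, so it induces a bijection between matchings of indecomposable summands of $(M^\bullet, N^\bullet)$ and of $(\mathscr{E}M^\bullet, \mathscr{E}N^\bullet)$. By the previous step each pairwise distance appearing in the formula of Proposition \ref{prop3.10} (i) is preserved, so taking the infimum over matchings yields the claimed $W_p$-isometry. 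The main bookkeeping obstacle is checking that the zigzag-bijection respects the $\mathcal{T}$-class condition, which follows automatically from the pointwise weight equality for each $x\in \R$ established above.
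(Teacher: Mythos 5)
Your proposal is correct and follows essentially the same route as the paper, which derives the theorem as a ``direct consequence'' of Corollary \ref{cor2.12} together with the preceding discussion (the decomposition \eqref{directco} into interval indecomposables with local endomorphism rings, the pointwise isometries for each $\Hom(-,\overline{S_x})$, and Proposition \ref{prop3.10}). Your write-up merely makes explicit the integration step, the cost-preserving bijection of zigzags under $\mathscr{E}$, and the correspondence of matchings, all of which are the intended content of the paper's sketch.
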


\section{Acknowledgments}

Most of this research was performed during the second author's visit to the Department of Mathematics at the University of Florida during Fall 2022  
%The second author 
and he
would like to express his gratitude 
%to all the people at the University of Florida involved in this research (specially to the first author) 
for their hospitality during his visit. 
The Office of Academic Affairs at Valdosta State University supported the second author during this visit with a paid academic leave for one semester. The second author was also supported during Fall 2023 by the Facultad de  Matem\'aticas e Ingenier\'{\i}as at the Fundaci\'on Universitaria Konrad Lorenz as a Docente Investigador in this institution. 
The first author was partially supported 
by the National Science Foundation (NSF) grant DMS-2324353
and
by the Southeast Center for Mathematics and Biology, an NSF-Simons Research Center for Mathematics of Complex Biological Systems, under NSF Grant No. DMS-1764406 and Simons Foundation Grant No. 594594.

% \section{Data availability statement}  
% This manuscript has no associated data.

\appendix

\section{Background on Categories}\label{appendix}
In this section, we review the required definitions and results in order to obtain our results. For basic notions from category theory, we refer the reader to e.g. \cite[App. A]{weibel}.

\subsection{Additive categories}  (\cite[\S 2.1]{krause4} and \cite[App. A]{weibel})

A category $\Cb$ is {\it additive} if it admits finite products, including the product indexed over the empty set, for each pair of objects $X, Y\in \Ob(\Cb)$, the set of morphisms $\Hom_\Cb(X,Y)$ is an abelian group, and the composition maps 
\begin{equation*}
\Hom_\Cb(Y,Z)\times \Hom_\Cb(X,Y)\to \Hom_\Cb(X,Z)\\
\end{equation*} 
that send a pair $(\psi,\phi)$ to the composite $\psi\circ \phi$ are biadditive. If $\Cb$ is an additive category, then finite product and direct sums exist and coincide \cite[Lemma 2.1.1]{krause4}. Let $\Cb$ be an additive category and $X, Y \in \Ob(\Cb)$ be arbitrary objects. We denote by $X\oplus Y$ the (co)product of $X$ and $Y$ in $\Cb$ and call it their direct sum. We say that $X$ is {\it indecomposable} if $X$ is not isomorphic to a direct sum of two non-zero objects in $\Cb$, i.e. if $X$ is  isomorphic to $W\oplus Z$ in $\Cb$, then either $W\cong 0$ or $Z\cong 0$. Recall that the {\it kernel} of a morphism $f: Y\to Z$ in $\Cb$ is a morphism $\iota: X\to Y$ such that $f\circ \iota = 0$ and for every morphism $\epsilon:X'\to Y$ such that $f\circ \epsilon=0$, there exists a unique morphism $\epsilon': X'\to X$ in $\Cb$ such that $\epsilon = \iota\circ \epsilon'$. In this situation we write $\iota = \ker f$ and consider it as a sub-object of $Y$. Dually, the {\it cokernel} of $g: Y\to Z$ in $\Cb$ is a morphism $\pi: Z\to W$ such that $\pi\circ g =0$ and for every morphism $\lambda: Z\to W'$ such that $g\circ \lambda = 0$, there exists a unique morphism $\lambda': W'\to W$ such that  $\lambda'=\pi\circ \lambda$. In this situation we write $\pi = \mathrm{coker}\,g$ and consider it as a sub-object of $W$. Let $\Db$ be another additive category. A functor $\Fun: \Cb\to \Db$ is {\it additive} if $\Fun$ preserves finite products. 

\subsection{Abelian categories }(\cite[App. A.4]{weibel}))\label{abelian}
Let $\Cb$ be an additive category. We say that $\Cb$ is an {\it abelian category} if the following conditions are satisfied: 
\begin{itemize}
\item[(AB1)] Every morphism in $\Cb$ has a kernel and cokernel.
\item[(AB2)] Every monomorphism in $\Cb$ is the kernel of its cokernel.
\item[(AB3)] Every epimorphism in $\Cb$ is the cokernel of its kernel.
\end{itemize}

\subsection{$\k$-linear categories} (\cite[App. A.1 \& A.2]{assem3})\label{klinear}
Let $\k$ be a field and let $\Cb$ be an additive category. We say that  $\Cb$ is a {\it $\k$-linear category}  if the morphism set  $\Hom_\Cb(X,Y)$ is a $\k$-vector space and the composition morphism $\Hom_\Cb(X,Y)\times \Hom_\Cb(Y,Z)\to \Hom_\Cb(X,Z)$ is $\k$-bilinear.  If $\Cb$ and $\Db$ are $\k$-linear categories, we say that an additive functor $\Fun: \Cb\to \Db$ is a $\k$-linear functor if $\Fun$ induces a morphism between $\k$-vector spaces $\Hom_\Cb(X,Y)\to \Hom_\Db(\Fun X,\Fun Y)$. 

\subsection{Krull-Schmidt categories} (\cite[\S 4]{krause2})\label{sub0}
Let $\Cb$ be an additive category. We say that $\Cb$ is  a {\it Krull-Schmidt category} if every object decomposes into a finite direct sum of objects having local endomorphism rings. Such decomposition is unique up to the order of the direct summands \cite[Thm. 4.2]{krause2}.

\subsection{Exact categories} (\cite[\S 2.1]{krause4})\label{sec9} Let $\Cb$ be an additive category. A sequence of morphisms in $\Cb$
\begin{equation}\label{exactseq}
\xi: 0\to X\xrightarrow{\alpha} Y\xrightarrow{\beta} Z\to 0
\end{equation} 
is said to be {\it exact} if  $\alpha$ is the kernel of $\beta$ and $\beta$ is the cokernel of $\alpha$. An {\it exact category} is a pair $(\Cb, \Eb)$ consisting of an additive category $\Cb$ and a class $\Eb$ of exact sequences $\xi$  in $\Cb$ as (\ref{exactseq}) which we call {\it admissible} (and say that $\alpha$ and $\beta$ are an {\it admissible monomorphism} and an {\it admissible epimorphism}, respectively) and which satisfy the following axioms:

\begin{itemize}
\item[(EX1)] For all objects $X\in \Ob(\Cb)$, the identity morphism $\mathrm{id}_X: X\to X$ is both an admissible monomorphism and an admissible epimorphism.
\item[(EX2)] The composition of two admissible monomorphisms is an admissible monomorphism, and the composition of two admissible epimorphisms is an admissible epimorphism.
\item[(EX3)] Each pair of morphisms $X'\xleftarrow{\phi} X\xrightarrow{\alpha}Y$ with $\alpha$ an admissible monomorphism can be completed to a pushout diagram
\begin{equation*}
\xymatrix@=30pt{
X\ar[r]^{\alpha}\ar[d]_{\phi}&Y\ar[d]\\
X'\ar[r]^{\alpha'}&Y'
}
\end{equation*}
such that $\alpha'$ is an admissible monomorphism. And each pair of morphisms $Y\xrightarrow{\beta}Z\xleftarrow{\psi}Z'$ with $\beta$ an admissible epimorphism can be completed to a pullback diagram
\begin{equation*}
\xymatrix@=30pt{
Y'\ar[r]^{\beta'}\ar[d]&Z'\ar[d]^{\psi}\\
Y\ar[r]^{\beta}&Z
}
\end{equation*}
such that $\beta'$ is an admissible epimorphism.  
\end{itemize}
 
In particular, in the situation of axiom (EX3), the morphism $\phi$ induces an isomorphism $\mathrm{coker}\, \alpha\cong \mathrm{coker}\, \alpha'$, while $\psi$ induces an isomorphism $\ker\beta'\cong \ker \beta$. Two exact sequences $\xi$ and $\xi'$ in $\Eb$ are {\it equivalent} if there is a commutative diagram in $\Cb$ of the form: 
\begin{equation*}
\xymatrix@=30pt{
\xi:&0\ar[r]&X\ar[r]^{\alpha}\ar[d]^{\mathrm{id}_X}&Y\ar[r]^\beta\ar[d]^{\phi}&Z\ar[r]\ar[d]^{\mathrm{id}_Y}&0\\
\xi':&0\ar[r]&X\ar[r]^{\alpha}&Y\ar[r]^\beta&Z\ar[r]&0
}
\end{equation*}
\noindent
It follows that $\phi$ is an isomorphism.

To simplify the notation, we use $\Cb$ instead of $(\Cb, \Eb)$ to denote an exact category.

\subsection{Frobenius categories} (\cite[\S 2.1]{happel})\label{sec5}
Let $\Cb$ be an exact category. We say that an object $P\in \Ob(\Cb)$ is {\it projective} (resp. $I\in \Ob(\Cb)$) if all admissible exact sequences $0\to X\to Y\to P\to 0$ (resp. $0\to I\to Y\to Z\to 0$) in $\Cb$ split, i.e. $Y\cong X\oplus P$ (resp. $Y\cong I\oplus Z$). We say that $\Cb$ has {\it enough projectives} (resp. {\it enough injectives}) if for all objects $X\in \Ob(\Cb)$ there exists an admissible epimorphism (resp. admissible monomorphism) $P\to X$ (resp. $X\to I$) with $P$ projective (resp. $I$ injective) in $\Cb$. We say that $\Cb$ is {\it Frobenius} if it has enough projective and enough injectives and the projective and injective objects coincide.  If $\Cb$ is a Frobenius category and $X\in \Ob(\Cb)$, then the {\it injective hull} of $X$ is an projective-injective object $I(X)$ together with a monomorphism $\iota_X: X\to I(X)$ with the property that if $f:I(X)\to Y$ is a morphism such that $f\circ \iota_X$ is a monomorphism, then $f$ is also a monomorphism.

\subsection{Pretriangulated and triangulated categories} (\cite[Def. 1.1.2 \& Def. 1.3.13]{neeman})\label{traingcat}
A {\it suspended category} is a pair $(\Tr, \Sigma)$ consisting of an additive category $\Tr$ and a self-equivalence $\Sigma: \Tr\to \Tr$ which is called a {\it suspension} or {\it shift}. A {\it triangle} in $(\Tr,\Sigma)$ is a sequence $(\alpha, \beta, \gamma)$ of morphisms 
\begin{equation}
\xymatrix@=30pt{
X\ar[r]^{\alpha}&Y\ar[r]^{\beta}&Z\ar[r]^{\gamma}&\Sigma X.
}
\end{equation}

A morphism between two triangles $(\alpha, \beta, \gamma)$ and $(\alpha', \beta', \gamma')$ in $\Tr$ is a triple $(\phi_1,\phi_2,\phi_3)$ of morphisms in $\Tr$ that make the following diagram in $\Tr$ commutative.

\begin{equation}\label{morphtriang}
\xymatrix@=30pt{
X\ar[d]_{\phi_1}\ar[r]^{\alpha}&Y\ar[d]_{\phi_2}\ar[r]^{\beta}&Z\ar[d]_{\phi_2}\ar[r]^{\gamma}&\Sigma X\ar[d]_{\Sigma\phi_1}\\
X'\ar[r]^{\alpha'}&Y'\ar[r]^{\beta'}&Z'\ar[r]^{\gamma'}&\Sigma X'\\
}
\end{equation}
%In this situation, if each $\phi_1$, $\phi_2$ and $\phi_3$ are isomorphisms in $\Tr$, then $(\phi_1,\phi_2,\phi_3)$ is called an isomorphism of triangles. 
A {\it pretriangulated category} is a triple $(\Tr,\Sigma, \Xi)$ consisting of a suspended category $(\Tr, \Sigma)$ and a class $\Xi$ of distinguished triangles, which we call {\it distinguished triangles} that satisfy the following axioms:

\begin{itemize}
\item[(Tr0)] A triangle isomorphic to an distinguished triangle is a distinguished triangle. For each $X\in \Ob(\Tr)$, the triangle $X\xrightarrow{\mathrm{id}_X} X\to 0\to \Sigma X$ is distinguished.
\item[(Tr1)]  Each morphism $\alpha: X\to Y$ in $\Tr$  fits into an distinguished triangle 
\begin{equation}\label{inducedtriag}
\xymatrix@=30pt{
X\ar[r]^{\alpha}&Y\ar[r]^{\beta}&C_{\alpha}\ar[r]^{\gamma}&\Sigma X.
}
\end{equation}
\item[(Tr2)] A triangle $(\alpha, \beta, \gamma)$ is exact if and only if $(\beta, \gamma, -\Sigma\alpha)$ is exact.
\item[(Tr3)] Given two distinguished triangles $(\alpha,\beta,\gamma)$ and $(\alpha',\beta',\gamma')$, each pair of morphisms $\phi_1: X\to X'$ and $\phi_2: Y\to Y'$ that satisfy $\phi_2\circ \alpha = \alpha'\circ \phi_1$ can be completed to a morphism of distinguished triangles as in (\ref{morphtriang}). 
\end{itemize}

We use $\Tr$ instead of $(\Tr, \Sigma, \Xi)$ to denote a pretriangulated category. In the situation of (Tr1), the object $C_\alpha$ is unique up to isomorphism. It follows from \cite[Cor. 1.2.6]{neeman} that a morphism $\alpha: X\to Y$ in $\Tr$ is an isomorphism if and only if $C_\alpha\cong 0$ in $\Tr$.

We say that a pretriangulated category $\Tr$ is {\it triangulated} if it further satisfies the following axiom:
\begin{itemize}
\item[(Tr4)] ({\it The Octahedral Axiom}) Given distinguished triangles $(\alpha_1,\alpha_2,\alpha_3)$, $(\beta_1,\beta_2,\beta_3)$, and $(\gamma_1,\gamma_2,\gamma_3)$ with $ \gamma_1 = \beta_1\circ \alpha_1$, there exists an distinguished triangle $(\delta_1,\delta_2,\delta_3)$ making the following diagram in $\Tr$ commutative.

\begin{equation*}
\xymatrix@=30pt{
X\ar[d]_{\mathrm{id}_X}\ar[r]^{\alpha_1}&Y\ar[d]_{\beta_1}\ar[r]^{\alpha_2}&U\ar[d]_{\delta_1}\ar[r]^{\alpha_3}&\Sigma X\ar[d]_{\mathrm{id}_{\Sigma X}}\\
X\ar[r]^{\gamma_1}&Z\ar[d]^{\beta_2}\ar[r]^{\gamma_2}&V\ar[d]^{\delta_2}\ar[r]^{\gamma_3}&\Sigma X\ar[d]^{\Sigma\alpha_1}\\
&W\ar[d]^{\beta_3}\ar[r]^{\mathrm{id}_W}&W\ar[d]^{\delta_3}\ar[r]^{\beta_3}&\Sigma Y\\
&\Sigma Y\ar[r]^{\Sigma\alpha_2}&\Sigma U
}
\end{equation*}
\end{itemize}

\subsection{(Thick) triangulated subcategories (\cite[\S 3.1]{krause4})}\label{sub3}

Let $\Tr$ be a triangulated category. A non-empty full subcategory $\Sub$ of $\Tr$ is a {\it triangulated subcategory} if the following conditions are satisfied:
\begin{itemize}
\item[(TS1)] For all objects $X$ in $\Sub$ and $i\in \Z$, $\Sigma^iX$ is in $\Sub$.
\item[(TS2)] For all distinguished triangles $X\to Y\to Z\to \Sigma X$ in $\Tr$, if two objects from $\{X,Y,Z\}$ belong to $\Sub$, then also does the third.
\end{itemize}
A triangulated subcategory $\Sub$ is {\it thick} if in addition the following condition holds:
\begin{itemize}
\item[(TS3)] Every direct summand of an object in $\Sub$ belongs to $\Sub$, i.e., a decomposition $X=X'\oplus X''$ for $X$ in $\Sub$ implies $X'$ in $\Sub$. 
\end{itemize}

\subsection{The stable category of a Frobenius category} (\cite[\S 3.3]{krause4})\label{sec8}
Let $\Fb$ be a Frobenius category (as in \ref{sec5}). The stable category of $\Fb$ denoted by $\underline{\Fb}$, is defined as follows. The objects of $\underline{\Fb}$ are the same as those of $\Fb$, and two morphisms $f,g: X\to Y$ in  $\Fb$ are identified in $\underline{\Fb}$ provided that $f-g$ factors through a projective-injective object of $\Fb$. In particular, the projective-injective objects are isomorphic to the zero-object in $\underline{\Fb}$.  Moreover, it follows from \cite[Lemma 1.1]{chenzhang} that  $X\cong Y$ in $\underline{\Fb}$ if and only if  there are projective-injective objects $I$ and $J$ in $\Fb$ such that $X\oplus I\cong Y\oplus J$ in $\Fb$. For all $X\in \Ob(\Fb)$, let $\Sigma X$ be the cokernel of an injective hull $I(X)$, which is unique up to isomorphism. It follows by \cite[pg. 13]{happel} that $\Sigma$ induces an equivalence $\Sigma: \underline{\Fb}\to \underline{\Fb}$ such that $\underline{\Fb}$ has the structure of a triangulated category. More precisely, if $0\to X\to Y\to Z\to 0$ is an admissible exact sequence in $\Fb$, we obtain an induced commutative diagram in $\Fb$:

\begin{equation}\label{inducedcomm}
\xymatrix@=30pt{
0\ar[r]&X\ar[d]_{\mathrm{id}_X}\ar[r]^{\alpha}&Y\ar[d]\ar[r]^{\beta}&Z\ar[d]_{\gamma}\ar[r]&0\\
0\ar[r]&X\ar[r]^{\iota_X}&I(X)\ar[r]&\Sigma X\ar[r]&0\\
}
\end{equation}
which in turn induces an distinguished triangle in $\underline{\Fb}$:  

\begin{equation}\label{triang}
\xymatrix@=30pt{
X\ar[r]^{\underline{\alpha}}&Y\ar[r]^{\underline{\beta}}&Z\ar[r]^{\underline{\gamma}}&\Sigma X.
}
\end{equation}
\noindent
Conversely, assume that $(\alpha',\beta',\gamma')$ is an distinguished triangle in $\underline{\Fb}$ given by 

\begin{equation}\label{triang'}
\xymatrix@=30pt{
X'\ar[r]^{\underline{\alpha}'}&Y'\ar[r]^{\underline{\beta}'}&Z'\ar[r]^{\underline{\gamma}'}&\Sigma X'.
}
\end{equation}
Then by \cite[Lemma 1.2]{chenzhang} there exists an distinguished triangle in $\underline{\Fb}$ as in (\ref{inducedtriag}) that is isomorphic to (\ref{triang'}) and which is induced by the admissible exact sequence in $\Fb$ given by
\begin{equation}\label{sexact}
0\to X\xrightarrow{\begin{pmatrix}\alpha\\\iota_X\end{pmatrix}} Y\oplus I(X)\to C_\alpha\to 0.
\end{equation}

\subsection{Homological functors}{(\cite[Def. 1.1.7]{neeman})}\label{cohfun} A covariant functor $\H: \Tr\to \Ab$ from a pretriangulated category $\Tr$ to an abelian category $\Ab$ is called a {\it homological functor} if for all distinguished triangles $X\to Y\to Z\to \Sigma X$, the sequence

\begin{equation*}
\H(X)\to \H(Y)\to \H(Z).
\end{equation*}
is exact in $\Ab$.  In particular, we obtain a long exact sequence in $\Ab$:
\begin{equation}\label{fun}
\cdots \to \H(\Sigma^{-1}Z)\to \H(X)\to \H(Y)\to \H(Z)\to \H(\Sigma X)\to\cdots
\end{equation}
\noindent
In this situation, for all $i\in \Z$, we set $\H^i(X)=\H(\Sigma^iX)$. A {\it cohomological functor} from $\Tr$ to $\Ab$ is a homological functor $\Fun: \Tr^\textup{op}\to \Ab$ that satisfies (\ref{fun}) with all the arrows reversed. 

\subsection{Brown representability}(\cite[\S 3.4]{krause4})\label{brown}
Let $\Tr$ be a triangulated category with arbitrary direct sums. A triangulated subcategory $\Sub$ of $\Tr$ is called {\it localizing} if it is closed under all direct sums. Given a class $\mathscr{X}$ of objects in $\Tr$, we denote by $\mathrm{Loc}(\mathscr{X})$ the smallest localizing subcategory of $\Tr$ that contains $\mathscr{X}$. A set of objects $\mathscr{X}$ of $\Tr$ is called {\it perfectly generating} if $\mathrm{Loc}(\mathscr{X}) = \Tr$ and the following holds. Given a countable family of morphisms $X_i\to Y_i$ in $\Tr$ such that the map $\Hom_\Tr(S,X_i)\to \Hom_\Tr(S,Y_i)$ is surjective for all $i$ and $S\in \mathscr{X}$, the induced map 
\begin{equation*}
\Hom_\Tr(S,\bigoplus_iX_i)\to \Hom_\Tr(S,\bigoplus_iY_i)
\end{equation*}    
is surjective. We say that $\Tr$ is {\it perfectly generated} if $\Tr$ admits a perfectly generating set. The following result is known as the {\it Brown representability theorem} for a perfectly generated triangulated category. 

\begin{theorem}{(\cite[Thm. 3.4.5]{krause4})}\label{brownthm}
Let $\Tr$ be a perfectly generated triangulated category.
%and let $\Ab$ an abelian category. 
Then a functor $\Fun: \Tr^\textup{op}\to \mathrm{Ab}$ is cohomological and sends all direct sums in $\Tr$ to products in $\mathrm{Ab}$ if and only if $\Fun$ is naturally equivalent to $\Hom_\Tr(-,X)$ for some object $X$ in $\Tr$.
 \end{theorem}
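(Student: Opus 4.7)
The forward implication is immediate from the axioms of a triangulated category: for any $X \in \Tr$, applying $\Hom_\Tr(-, X)$ to an exact triangle yields a long exact sequence of abelian groups, and $\Hom_\Tr(\bigoplus_i Y_i, X) \cong \prod_i \Hom_\Tr(Y_i, X)$ since direct sums in $\Tr$ are coproducts. The substance is therefore in the converse, and my plan is to build the representing object $X$ as a homotopy colimit of a sequence $X_0 \to X_1 \to X_2 \to \cdots$ equipped with compatible natural transformations $\phi_n : \Hom_\Tr(-, X_n) \to \Fun$, then to extend the natural isomorphism from the perfect generating set to all of $\Tr$.

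For the base case, I would set $X_0 = \bigoplus_{S \in \mathscr{X},\, x \in \Fun(S)} S$. Each $x \in \Fun(S)$ determines, via Yoneda, a natural transformation $\Hom_\Tr(-, S) \to \Fun$; because $\Fun$ sends direct sums to products, these assemble to $\phi_0 : \Hom_\Tr(-, X_0) \to \Fun$ that is surjective on each $S \in \mathscr{X}$ by construction. Inductively, assuming $\phi_n$ has been built, I would choose for each $S \in \mathscr{X}$ a generating family of $\ker \phi_n(S) \subseteq \Hom_\Tr(S, X_n)$, parametrize them by a morphism $\alpha_n : K_n \to X_n$ with $K_n$ a direct sum of objects of $\mathscr{X}$, and complete to an exact triangle $K_n \xrightarrow{\alpha_n} X_n \xrightarrow{\iota_n} X_{n+1} \to \Sigma K_n$. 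Since $\phi_n \circ \Hom_\Tr(-, \alpha_n) = 0$ by design and $\Fun$ is cohomological, $\phi_n$ extends along $\iota_n$ to $\phi_{n+1}$.

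I would then define $X$ via the Milnor homotopy colimit triangle
\begin{equation*}
\bigoplus_n X_n \xrightarrow{\,1 - \mathrm{shift}\,} \bigoplus_n X_n \longrightarrow X \longrightarrow \Sigma \bigoplus_n X_n,
\end{equation*}
and extend the $\phi_n$ to a natural transformation $\phi : \Hom_\Tr(-, X) \to \Fun$ using that $\Fun$ turns the direct sum $\bigoplus_n X_n$ into a product. The inductive step was designed precisely so that on $S \in \mathscr{X}$ the maps $\Hom_\Tr(S, X_n) \to \Fun(S)$ are surjective at every stage and the kernels are killed by $\iota_n$. Applying $\Hom_\Tr(S,-)$ to the Milnor triangle, the perfect generation hypothesis is exactly what I need to control the shift map on $\bigoplus_n \Hom_\Tr(S, X_n)$ and conclude $\Hom_\Tr(S, X) \cong \operatorname{colim}_n \Hom_\Tr(S, X_n) \cong \Fun(S)$, so $\phi(S)$ is an isomorphism for every $S \in \mathscr{X}$.

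The main obstacle is promoting this isomorphism on $\mathscr{X}$ to an isomorphism on all of $\Tr$. For this I would consider the full subcategory $\mathscr{Y} \subseteq \Tr$ of objects $Y$ on which $\phi(Y)$ is an isomorphism. It contains $\mathscr{X}$, is closed under $\Sigma^{\pm 1}$, is closed under arbitrary direct sums (since both $\Hom_\Tr(-, X)$ and $\Fun$ send them to products), and is closed under third vertices of exact triangles by the five lemma applied to the long exact sequences coming from the cohomological property of both functors. Thus $\mathscr{Y}$ is a localizing subcategory of $\Tr$ containing $\mathscr{X}$, so $\mathscr{Y} \supseteq \mathrm{Loc}(\mathscr{X}) = \Tr$, finishing the argument.
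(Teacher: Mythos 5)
This theorem is quoted in the paper directly from Krause's book (\cite[Thm.\ 3.4.5]{krause4}); the paper supplies no proof of its own, so your attempt can only be measured against the standard argument. Your architecture is the correct one --- the easy forward direction, the tower $X_0 \to X_1 \to \cdots$ with $\phi_n(S)$ surjective and kernels killed at the next stage, the Milnor homotopy colimit, and a localizing-subcategory argument at the end --- and most of the construction is sound.

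The genuine gap is the sentence where you invoke perfect generation. For a non-compact $S$ the canonical map $\bigoplus_n \Hom_\Tr(S,X_n) \to \Hom_\Tr(S,\bigoplus_n X_n)$ need not be invertible, so ``controlling the shift map on $\bigoplus_n \Hom_\Tr(S,X_n)$'' does not compute $\Hom_\Tr(S,X)$, and the isomorphism $\Hom_\Tr(S,X)\cong\operatorname{colim}_n \Hom_\Tr(S,X_n)$ is not available by this route --- it is precisely the statement that fails once one leaves the compactly generated setting. What the argument actually requires is weaker: surjectivity of $\operatorname{colim}_n\Hom_\Tr(S,X_n)\to\Hom_\Tr(S,X)$, which combined with surjectivity of each $\phi_n(S)$ and the kernel-killing yields bijectivity of $\phi(S)$. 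Deducing that surjectivity from the countable-family surjectivity axiom in the definition of a perfectly generating set is the technical heart of Krause's proof, and it is entirely absent here. A second, smaller issue: your subcategory $\mathscr{Y}$ is not obviously closed under $\Sigma^{\pm1}$ --- a natural transformation of cohomological functors that is invertible at $Y$ need not be invertible at $\Sigma Y$ --- and the five-lemma step for the third vertex of a triangle already needs invertibility at suspensions of the first two. The standard fix is to close $\mathscr{X}$ under suspensions before forming $X_0$ and to define $\mathscr{Y}$ as the class of $Y$ with $\phi(\Sigma^nY)$ invertible for all $n\in\Z$; with that modification the localizing-subcategory argument closes correctly.
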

 
\subsection{Compact objects}(\cite[\S 3.4]{krause4})\label{compact}
Let $\Tr$ be a triangulated category with arbitrary direct sums. An object $X$ in $\Tr$ is {\it compact} if for any morphism $\phi: X\to \bigoplus_{i\in I} Y_i$ in $\Tr$, there is a finite set $J\subseteq I$ such that $\phi$ factors through $\bigoplus_{i\in J} Y_i$. Thus $X$ is compact in $\Tr$ if and only if the canonical map 
\begin{equation*}
\bigoplus_{i\in I}\Hom_\Tr(X,Y_i)\to \Hom_{\Tr}(X,\bigoplus_{i\in I}Y_i)
\end{equation*}
is bijective for all direct sums $\bigoplus_{i\in I}Y_i$ in $\Tr$. A set of objects $\mathscr{X}$ of compact objects in $\Tr$ is called {\it compactly generating} if $\Tr$ has no proper localizing subcategory containing $\mathscr{X}$. In this case, we say that $\Tr$ is {\it compactly generated}.   The following result is the Brown representability theorem for compactly generated triangulated categories.

\begin{theorem}{(\cite[Thm. 3.4.16]{krause4})}\label{brownthm2}
Let $\Tr$ be a compactly generated triangulated category and let $\mathrm{Ab}$ be the category of abelian groups. Then a functor $\Fun: \Tr^\textup{op}\to \mathrm{Ab}$ is cohomological and sends all direct sums in $\Tr$ to products in $\mathrm{Ab}$ if and only if $\Fun$ is naturally equivalent to $\Hom_\Tr(-,X)$ for some object $X$ in $\Tr$.
\end{theorem}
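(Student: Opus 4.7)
The plan is to deduce Theorem \ref{brownthm2} from Theorem \ref{brownthm} by showing that every compactly generated triangulated category is perfectly generated (with the same generating set), and then quoting the representability result for the perfectly generated case. The easy direction -- that $\Hom_\Tr(-,X)$ is contravariant cohomological and sends direct sums in $\Tr$ to products in $\mathrm{Ab}$ -- is identical to the easy direction already implicit in Theorem \ref{brownthm}: cohomologicality follows from the long exact sequence obtained by applying $\Hom_\Tr(-,X)$ to any exact triangle, and the direct-sum-to-product property is just the universal property of the coproduct in $\Tr$.

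For the nontrivial direction, I would take a set $\mathscr{X}$ of compact objects that compactly generates $\Tr$ and verify both conditions in the definition of perfect generation (see \ref{brown}). The first condition, $\mathrm{Loc}(\mathscr{X}) = \Tr$, is immediate from the definition of ``compactly generated'' in \ref{compact}, since that definition asserts that $\Tr$ has no proper localizing subcategory containing $\mathscr{X}$, and $\mathrm{Loc}(\mathscr{X})$ is a localizing subcategory containing $\mathscr{X}$. The second condition requires the following check: given any countable family of morphisms $X_i \to Y_i$ in $\Tr$ such that $\Hom_\Tr(S, X_i) \to \Hom_\Tr(S,Y_i)$ is surjective for every $i$ and every $S \in \mathscr{X}$, the induced map
\begin{equation*}
\Hom_\Tr\bigl(S, \bigoplus_i X_i\bigr) \longrightarrow \Hom_\Tr\bigl(S, \bigoplus_i Y_i\bigr)
\end{equation*}
is surjective. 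Since each $S \in \mathscr{X}$ is compact, the canonical map from \ref{compact} gives isomorphisms $\Hom_\Tr(S, \bigoplus_i X_i) \cong \bigoplus_i \Hom_\Tr(S, X_i)$ and similarly for $Y_i$, so the map above is identified with $\bigoplus_i \Hom_\Tr(S,X_i) \to \bigoplus_i \Hom_\Tr(S,Y_i)$, which is surjective because a direct sum of surjections of abelian groups is surjective.

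Having established that $\mathscr{X}$ perfectly generates $\Tr$, I would then apply Theorem \ref{brownthm}: a contravariant functor $\Fun: \Tr \to \mathrm{Ab}$ is cohomological and converts direct sums to products if and only if $\Fun \cong \Hom_\Tr(-,X)$ for some $X \in \Tr$. This is exactly the statement of Theorem \ref{brownthm2}.

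The main obstacle is conceptual rather than technical: one must recognize that compactness -- originally a condition about how $\Hom_\Tr(S,-)$ interacts with direct sums -- is strong enough to force the countable-family surjectivity property required by perfect generation. Once that is seen, the rest is a short chain of implications. No additional categorical machinery beyond what is recorded in \ref{brown} and \ref{compact} is needed, and indeed this reduction is the strategy used in the cited Theorem 3.4.16 of \cite{krause4}.
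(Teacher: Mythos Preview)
Your reduction is correct: compactness of each $S\in\mathscr{X}$ gives the isomorphisms $\Hom_\Tr(S,\bigoplus_i X_i)\cong\bigoplus_i\Hom_\Tr(S,X_i)$, and a direct sum of surjections of abelian groups is surjective, so a compactly generating set is perfectly generating and Theorem~\ref{brownthm} applies. Note, however, that the paper does not supply its own proof of Theorem~\ref{brownthm2}; it is quoted from \cite[Thm.~3.4.16]{krause4} as background in the appendix, and the argument you give is precisely the standard reduction used there.
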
 
 
\subsection{Triangle-equivalences}
\label{triangleeq} 
({\cite[pg. 74]{krause4}})
Let $\Tr$ and $\Tr'$ be triangulated categories with suspension functors $\Sigma$ and $\Sigma'$, respectively. An additive functor $\mathscr{E}: \Tr'\to \Tr$ is said to be {\it exact} if there exists an invertible natural transformation $\alpha: \mathscr{E}\circ\Sigma' \to \Sigma\circ\mathscr{E}$ such that for all distinguished triangles $X'\to Y'\to Z'\to \Sigma' X'$ in $\Tr'$, we have that $\mathscr{E}X'\to \mathscr{E}Y'\to \mathscr{E}Z'\to \mathscr{E}\Sigma' X$ is an distinguished triangle in $\Tr$. If $\mathscr{E}$ is an equivalence of categories which is also exact, then we said that $\mathscr{E}$ is a {\it triangle-equivalence} between $\Tr$ and $\Tr'$ and say that $\Tr$ and $\Tr'$ are {\it triangle-equivalent}.  

\subsection{Categories of complexes, homotopy categories and derived categories (\cite[Chap. 4]{krause4})}\label{sub6}
Let $\Cb$ be an additive category.  We denote by $\mathcal{C}(\Cb)$ the {\it category of complexes} with terms in $\Cb$ which we describe as follows. The objects of $\mathcal{C}(\Cb)$ are of the form
\begin{equation*}
X^\bullet: \cdots\to X^{i-1}\xrightarrow{d_X^{i-1}}X^i\xrightarrow{d_X^i}X^{i+1}\to \cdots
\end{equation*} 
with $X^i \in \Ob(\Cb)$ and for each $i\in \Z$, $d_X^i: X^i\to X^{i+1}$ is a morphism in $\Cb$ such that $d_X^{i}\circ d_X^{i-1}=0$.   
A morphism $f^\bullet: X^\bullet \to Y^\bullet$ in $\mathcal{C}(\Cb)$ is given by a sequence $f^\bullet = \{f^i\}_{i\in \Z}$ of morphisms in $\Cb$ with $f^i: X^i\to Y^i$ such that $f^{i+1}\circ d_X^i=d_Y^i\circ f^i$ for all $i\in \Z$.  The {\it mapping cone} of $f^\bullet: X^\bullet\to Y^\bullet$ is the complex $C_{f^\bullet}$ in $\mathcal{C}(\Cb)$ defined as follows. For each $i\in \Z$, 

\begin{align*}
(C_{f^\bullet})^i= Y^i\oplus X^{i+1} \text{ and } d_{C_{f^\bullet}}^i=\begin{pmatrix}  d_Y^i&f^{i+1}\\0&-d_X^{i+1}\end{pmatrix}. 
\end{align*}
We say that $f^\bullet:X^\bullet \to Y^\bullet$ is {\it null-homotopic} if there are morphisms $\rho^i: X^i\to Y^{i-1}$ such that $f^i=d_Y^{i-1}\circ \rho^i+\rho^{i+1}\circ d^i_X$. We denote by $\mathcal{K}(\Cb)$ the {\it homotopy category} of $\Cb$ is the quotient of $\mathcal{C}(\Cb)$ by the ideal of all null-homotopic morphisms in $\mathcal{C}(\Cb)$. More precisely, the objects of $\mathcal{K}(\Cb)$ are the same as those of $\mathcal{C}(\Cb)$ and two morphisms $f^\bullet, g^\bullet: X^\bullet \to Y^\bullet$ in $\mathcal{C}(\Cb)$ are identified in $\mathcal{K}(\Cb)$ if and only if $f^\bullet - g^\bullet$ is null-homotopic. In this situation, we say that $f^\bullet$ and $g^\bullet$ are {\it homotopic}.  Let $X^\bullet\in \Ob(\mathcal{C}(\Cb))$ be fixed but arbitrary. If $\ell\in \Z$, then the {\it $\ell$-th shifting}  of $X^\bullet$ is the complex $X^\bullet[\ell]$ defined as $(X^\bullet[\ell])^i=X^{i+\ell}$ for all $i\in \Z$ and all the differentials multiplied by $(-1)^\ell$.  If $f^\bullet: X^\bullet \to Y^\bullet$ is a morphism in $\mathcal{C}(\Cb)$, then we have a natural morphism $f^\bullet[\ell]: X^\bullet[\ell]\to Y^\bullet[\ell]$ such that $(f^\bullet[\ell])^i = f^{i+\ell}$ for all $i\in \Z$.
\noindent

The above definitions are compatible with homotopies and thus for $\ell\in \Z$ we also have a well-defined induced functor $[\ell]$ on $\mathcal{K}(\Cb)$. Moreover, it follows by e.g. \cite[Prop. 10.2.4]{weibel} that $\mathcal{K}(\Cb)$ is a triangulated category with suspension functor $\Sigma = [1]$ and where the distinguished triangles are of the form 
\begin{equation*}
X^\bullet \xrightarrow{h^\bullet} Y^\bullet \to C_{h^\bullet} \to X^\bullet[1].
\end{equation*} 

We say that a complex $X^\bullet$ in $\mathcal{C}(\Cb)$ is {\it acyclic} if for each $i\in \Z$, there exists an admissible sequence in $\Cb$
\begin{equation*}
0\to Z^i\xrightarrow{\alpha^i}X^i\xrightarrow{\beta^i}Z^{i+1}\to 0
\end{equation*} 
such that $d_X^i=\alpha^{i+1}\circ \beta^i$. We denote by $\mathbf{Ac}(\Cb)$ the full subcategory of complexes $\mathcal{C}(\Cb)$ that are isomorphic to an acyclic complex in $\mathcal{K}(\Cb)$. We say that $h^\bullet: X^\bullet \to Y^\bullet$ in $\mathcal{C}(\Cb)$ is a {\it quasi-isomorphism} if its mapping cone $C_{h^\bullet}$ is in $\mathbf{Ac}(\Cb)$. The {\it derived category} $\mathcal{D}(\Cb)$ of $\Cb$ is the localization of $\mathcal{K}(\Cb)$ by the the subcategory $\mathbf{Ac}(\Cb)$. More precisely, the objects of $\mathcal{D}(\Cb)$ are the same as those in $\mathcal{K}(\Cb)$ (and therefore the same as those in $\mathcal{C}(\Cb)$) and a morphism $f^\bullet: X^\bullet \to Y^\bullet$ in $\mathcal{D}(\Cb)$ is of the form $X^\bullet\xleftarrow{q^\bullet} Z^\bullet\xrightarrow{h^\bullet} Y^\bullet$, where $q^\bullet$ is a quasi-isomorphism in $\mathcal{C}(\Cb)$ and $h^\bullet$ is in $\mathcal{K}(\Cb)$. Two morphisms $f^\bullet, g^\bullet: X^\bullet \to Y^\bullet$ in $\mathcal{D}(\Cb)$ of the form  $X^\bullet\xleftarrow{q^\bullet} Z^\bullet\xrightarrow{h^\bullet} Y^\bullet$ and $X^\bullet\xleftarrow{q'^\bullet} Z'^\bullet\xrightarrow{h'^\bullet} Y^\bullet$, respectively, are identified in $\mathcal{D}(\Cb)$ if the exists a complex $Z''^\bullet$ in $\mathcal{D}(\Cb)$ and morphisms $Z^\bullet\xleftarrow{q''^\bullet} Z''^\bullet\xrightarrow{h''^\bullet} Z'^\bullet$ with $q''^\bullet$ a quasi-isomorphism and $h''^\bullet: Z''^\bullet \to Z'^\bullet$ in $\mathcal{K}(\Cb)$ such that $h^\bullet \circ q''^\bullet = h'^\bullet \circ h''^\bullet$ and $q^\bullet \circ q''^\bullet = q'^\bullet \circ h''^\bullet$. If $f^\bullet: X^\bullet \to Y^\bullet$ is a morphism in $\mathcal{D}(\Cb)$ represented by $X^\bullet\xleftarrow{q^\bullet} Z^\bullet\xrightarrow{h^\bullet} Y^\bullet$, then for all $\ell\in \Z$, we let $f^\bullet[\ell]: X^\bullet[\ell]\to Y^\bullet[\ell]$ be represented by $X[\ell]^\bullet\xleftarrow{q^\bullet[\ell]} Z[\ell]^\bullet\xrightarrow{h^\bullet[\ell]} Y^\bullet[\ell]$. Since being a quasi-isomorphism is invariant under shifting, we obtain that $f^\bullet[\ell]$ is also in $\mathcal{D}(\Cb)$. It follows from e.g. \cite[Cor. 10.4.3]{weibel} that $\mathcal{D}(\Cb)$ is also a triangulated category with suspension functor $\Sigma = [1]$ such that if $f^\bullet: X^\bullet \to Y^\bullet$ is a quasi-isomorphism in $\mathcal{C}(\Cb)$ is an isomorphism in $\mathcal{D}(\Cb)$. Moreover, there is a canonical functor $\Cb\to \mathcal{D}(\Cb)$ that sends every object $X$ in $\Cb$ to the complex $\mathcal{D}(\Cb)$ concentrated in degree zero whose non-zero term is $X$, and which we denote by $\overline{X}$. 

\begin{lemma}{(\cite[Lemma 4.1.12]{krause4})}\label{remA1}
Assume that $\Cb$ is an exact category. If $0\to X\to Y\to Z\to 0$ is an admissible exact sequence in $\Cb$, then this sequence induces an distinguished triangle $\overline{X}\to \overline{Y}\to \overline{Z}\to \overline{X}[1]$ in $\mathcal{D}(\Cb)$.
\end{lemma}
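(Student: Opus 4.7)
The plan is to exhibit the desired exact triangle in $\mathcal{D}(\Cb)$ as isomorphic, inside $\mathcal{D}(\Cb)$, to the standard mapping-cone triangle of $\overline{\alpha}\colon \overline{X}\to \overline{Y}$, and then to conclude by invoking axiom (Tr1) of triangulated categories.

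First I would regard $\alpha\colon X\to Y$ as a morphism of stalk complexes $\overline{\alpha}\colon \overline{X}\to \overline{Y}$. By the mapping-cone formula recalled in \ref{sub6}, $C_{\overline{\alpha}}$ is the two-term complex with $X$ in degree $-1$, $Y$ in degree $0$, and differential $\alpha$. The description of exact triangles in $\mathcal{K}(\Cb)$ in \ref{sub6} then gives an exact triangle $\overline{X}\xrightarrow{\overline{\alpha}}\overline{Y}\xrightarrow{\iota}C_{\overline{\alpha}}\xrightarrow{\rho}\overline{X}[1]$ in $\mathcal{K}(\Cb)$, where $\iota$ is the canonical inclusion of $\overline{Y}$ into $C_{\overline{\alpha}}$ in degree $0$ and $\rho$ is the canonical projection of $C_{\overline{\alpha}}$ onto $\overline{X}[1]$ in degree $-1$. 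Since the localization functor $\mathcal{K}(\Cb)\to \mathcal{D}(\Cb)$ is triangulated, this remains an exact triangle in $\mathcal{D}(\Cb)$.

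Next I would define the chain map $\pi\colon C_{\overline{\alpha}}\to \overline{Z}$ that equals $\beta$ in degree $0$ and vanishes elsewhere; it is well-defined because $\beta\circ \alpha=0$, and it satisfies $\pi\circ \iota=\overline{\beta}$. The central step is to verify that $\pi$ is a quasi-isomorphism, i.e., that its mapping cone $C_\pi$ lies in $\mathbf{Ac}(\Cb)$. A direct calculation with the sign-corrected mapping-cone formula identifies $C_\pi$ with the three-term complex $X\xrightarrow{-\alpha}Y\xrightarrow{\beta}Z$ placed in degrees $-2,-1,0$. To verify acyclicity in the exact-category sense of \ref{sub6}, I would produce witnessing objects $W^{-1}=X$ and $W^0=Z$ and observe that $0\to X\xrightarrow{-\alpha}Y\xrightarrow{\beta}Z\to 0$ remains an admissible exact sequence, since the hypothesized $0\to X\xrightarrow{\alpha}Y\xrightarrow{\beta}Z\to 0$ is admissible and precomposition of an admissible monomorphism with the automorphism $-\mathrm{id}_X$ of $X$ preserves admissibility; the required admissible factorizations of each differential of $C_\pi$ then follow at once.

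Once $\pi$ is known to be a quasi-isomorphism, it is invertible in $\mathcal{D}(\Cb)$ by the very construction of $\mathcal{D}(\Cb)$ in \ref{sub6}. Setting the connecting morphism to be $\rho\circ \pi^{-1}$ in $\mathcal{D}(\Cb)$, the triple $(\mathrm{id}_{\overline{X}},\mathrm{id}_{\overline{Y}},\pi)$ is then an isomorphism in $\mathcal{D}(\Cb)$ from the mapping-cone triangle above onto the candidate triangle $\overline{X}\xrightarrow{\overline{\alpha}}\overline{Y}\xrightarrow{\overline{\beta}}\overline{Z}\xrightarrow{\rho\circ \pi^{-1}}\overline{X}[1]$, and axiom (Tr1) then delivers exactness of the latter. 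The hard part will be the acyclicity check for $C_\pi$: keeping the signs in the mapping-cone differentials consistent with the definition of an acyclic complex in the exact-category sense, and confirming that admissibility of the given short exact sequence transfers cleanly to each of the degree-wise factorizations required by that definition.
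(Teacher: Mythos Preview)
Your proposal is correct and follows the standard mapping-cone argument: complete $\overline{\alpha}$ to its cone triangle, exhibit the canonical map $\pi\colon C_{\overline{\alpha}}\to\overline{Z}$, and check that $C_\pi$ is the three-term complex $X\xrightarrow{-\alpha}Y\xrightarrow{\beta}Z$, which is acyclic in the exact-category sense because the given admissible short exact sequence supplies the required factorizations (the sign on $\alpha$ being harmless as you note). The paper itself does not prove this lemma at all---it simply quotes it from \cite[Lemma 4.1.12]{krause4}---and your argument is precisely the one Krause gives there, so there is nothing further to compare.
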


\subsection{Cohomology groups and bounded derived categories}\label{remA2}{(\cite[\S 4.1]{krause4})}
Assume next that $\Cb$ is abelian. For all $i\in \Z$,  the {\it $i$-th cohomology group} $\H^i(X^\bullet)$ of $X^\bullet$ is defined as
\begin{equation*}
\H^i(X^\bullet)= \ker d^i/\mathrm{im}\,d^{i-1}. 
\end{equation*}
If $f^\bullet: X^\bullet \to Y^\bullet$ is a morphism in $\mathcal{C}(\Ab)$, then for all $i\in \Z$ there is an induced morphism $\H^i(f^\bullet) : \H^i(X^\bullet)\to \H^i(Y^\bullet)$. If $f^\bullet, g^\bullet: X^\bullet\to Y^\bullet$ are homotopic, then for all $i\in \Z$, $\H^i(f^\bullet) = \H^i(g^\bullet)$. Thus we obtain a well-defined functor $\H^0: \mathcal{K}(\Cb)\to \Cb$ such that $\H^0(\overline{X})=X$ for all $X\in \Ob(\Cb)$. Moreover, $\H^0$ is a homological functor as in (\ref{cohfun}) by e.g. \cite[Prop. 4.1.3]{krause4}.  Thus by \cite[Lemma 4.1.4]{krause4}, $f^\bullet: X^\bullet \to Y^\bullet$ is a quasi-isomorphism if and only if for all $i\in \Z$, $\H^i(f^\bullet): \H^i(X^\bullet)\to \H^i(Y^\bullet)$ is an isomorphism in $\Cb$.
We denote by $\mathcal{D}^b(\Cb)$  the {\it bounded derived category} of $\Cb$ whose objects are those complexes $X^\bullet\in \Ob(\mathcal{D}(\Cb))$ such that $\H^i(X^\bullet)=0$ for almost all $i\in \Z$.  

\subsection{Hereditary categories (\cite[\S 4.4]{krause4})}\label{hereditarycat}

Let $\Cb$ be an abelian category.  A complex $X^\bullet$ in $\mathcal{D}(\Cb)$ is said to be {\it quasi-isomorphic to its cohomology} if there is a quasi-isomorphism between $X^\bullet$ and the complex
\begin{equation*}
\cdots\xrightarrow{0}\H^{i-1}(X^\bullet)\xrightarrow{0} \H^i(X^\bullet)\xrightarrow{0}\H^{i+1}(X^\bullet)\xrightarrow{0}\cdots.
\end{equation*}
In particular, there is an isomorphism in $\mathcal{D}(\Cb)$:
\begin{equation*}
X^\bullet \cong \bigoplus_{i\in \Z}\Sigma^{-i}\H^i(X^\bullet)
\end{equation*}
\noindent
On the other hand we say that $\Cb$ is {\it hereditary} if the functor $\Ext_\Cb^2(-,-)$ vanishes. It follows from \cite[Prop. 4.4.15]{krause4} that $\Cb$ is hereditary if and only if every object in $\mathcal{D}(\Cb)$ is quasi-isomorphic to its cohomology.

\bibliographystyle{amsplain}
\bibliography{Exact_weights}   

\end{document}